\documentclass[11pt]{amsart}

\RequirePackage{amsthm}
\usepackage{amssymb}
\usepackage{marvosym}
\RequirePackage{array}
\RequirePackage{enumerate}
\RequirePackage{cancel}
\RequirePackage{bbm}
\usepackage{longtable}
\usepackage[all]{xy}



\usepackage{graphicx}
\usepackage{tikzexternal}
\tikzexternalize



\newtheoremstyle%
{Theorem}%
{}%
{}%
{\itshape}%
{}%
{}%
{.}%
{ }%
{\thmname{\bfseries #1}%
\thmnumber{\;\bfseries #2}%
\thmnote{\;(\bfseries #3)}}%
\theoremstyle{plain}
\newtheorem{thm}{Theorem}[section]
\newtheorem{cor}[thm]{Corollary}
\newtheorem{lemma}[thm]{Lemma}
\newtheorem{prop}[thm]{Proposition}
\theoremstyle{definition}
\newtheorem{dfn}[thm]{Definition}
\newtheorem{ex}[thm]{Example}

\newtheorem{remark}[thm]{Remark}
\newtheorem*{theorem*1}{Theorem 1}
\newtheorem*{theorem*2}{Theorem 2}
\newtheorem*{theorem*3}{Theorem 3}
\numberwithin{figure}{section}
\numberwithin{table}{section}
\numberwithin{equation}{section}

\newenvironment{enum}
	{\begin{enumerate} [\upshape (1)]}
	{\end{enumerate}}
\newenvironment{enumi}
	{\begin{enumerate} [\upshape (i)]}
	{\end{enumerate}}
\newenvironment{enuma}
	{\begin{enumerate} [\upshape (a)]}
	{\end{enumerate}}


\newcommand{\calc}{\mathcal{C}}

\newcommand{\cale}{\mathcal{E}}
\newcommand{\calf}{\mathcal{F}}

\newcommand{\calh}{\mathcal{H}}

\newcommand{\calr}{\mathcal{R}}




\DeclareMathOperator{\Ext}{Ext}

\DeclareMathOperator{\End}{End}

\DeclareMathOperator{\Hom}{Hom}

\DeclareMathOperator{\cmod}{\mathsf{mod}}
\DeclareMathOperator{\Fan}{Fan}
\DeclareMathOperator{\AG}{AG}

\DeclareMathOperator{\gdim}{gl. dim}


\usepackage{tipa} 

\newcommand{\fieldstyle}[1]{\mathbb{#1}}
\renewcommand{\AA}{\fieldstyle{A}}

\newcommand{\NN}{\fieldstyle{N}}

\newcommand{\e}{\varepsilon}
\renewcommand{\tilde}{\widetilde}


\newcommand{\dimvv}[3]{\begin{smallmatrix}
						#1\\ #2\\ #3
					   \end{smallmatrix}}

\newcommand{\dimv}[2]{\begin{smallmatrix}
						#1\\ #2
					   \end{smallmatrix}}
\newcommand{\ds}{\ \phantom{1}}
\newcommand{\ms}[1]{\ #1\ }


\newcommand{\orientationofthearrow}{counter-clockwise }

\newcommand{\ot}{\leftarrow}

\newcommand{\za}{\alpha}
\newcommand{\zb}{\beta}
\newcommand{\zd}{\delta}
\newcommand{\zD}{\Delta}

\newcommand{\zg}{\gamma}
\newcommand{\zG}{\Gamma}

\title{Algebras from surfaces without punctures}
\author{Lucas David-Roesler}
\author{Ralf Schiffler}\thanks{The second author was supported by the NSF grant  DMS-1001637 and by the University of Connecticut.}

\newcommand{\res}{\mathit{res}}
\newcommand{\cut}[3]{\chi_{#1,#2,#3}}
\newcommand{\dd}{\dagger}
\renewcommand{\dag}{^\dd}
\newcommand{\quasitri}{quasi-triangle}

\newcommand{\df}[1]{\emph{#1}}
\newcommand{\IF}{\text{ if }}

\begin{document}
\begin{abstract}
We introduce a new class of finite dimensional gentle algebras, the \emph{surface algebras}, which  are constructed from an unpunctured Riemann surface with boundary and marked points by introducing cuts in  internal triangles of an arbitrary triangulation of the surface. We show that surface algebras are endomorphism algebras of partial cluster-tilting objects in generalized cluster categories, we compute the invariant of Avella-Alaminos and Geiss for surface algebras and we provide a geometric model for the module category of  surface algebras.
\end{abstract}
\maketitle
\section{Introduction}

We introduce a new class of finite dimensional gentle algebras, the \emph{surface algebras}, which includes the hereditary, the tilted, and the cluster-tilted algebras of Dynkin type $\mathbb{A}$ and Euclidean type $\tilde{\mathbb{A}}$.
These algebras are constructed from an unpunctured Riemann surface with boundary and marked points by introducing cuts in  internal triangles of an arbitrary triangulation of the surface. 

To be more precise, let $T$ be a triangulation of a bordered unpunctured Riemann surface $S$ with a set of marked points $M$, and let $(Q_T,I_T)$ be the bound quiver associated to $T$ as in \cite{CCS,ABCP}. The corresponding algebra $B_T=kQ_T/I_T$, over  an algebraically closed field $k$, is a finite dimensional gentle algebra \cite{ABCP}. Moreover, $B_T$  is the endomorphism algebra of the cluster-tilting object corresponding to $T$ in the generalized cluster category associated to $(S,M)$, see \cite{CCS,BMRRT,Amiot,BZ}. Each internal triangle in the triangulation $T$ corresponds to an oriented $3$-cycle in the quiver $Q_T$, and the relations for the algebra $B_T$ state precisely that the composition
of any two arrows in an oriented $3$-cycle is zero in $B_T$.

If the surface is a disc or an annulus then the corresponding cluster algebra, as defined in \cite{FST}, is acyclic, and, in this case, the algebra $B_T$  is cluster-tilted of type $\mathbb{A}$, if $S$ is a disc; and of type $\tilde{\mathbb{A}}$, if $S$ is an annulus \cite{CCS,BMR}. It has been shown in \cite{ABS} that every cluster-tilted algebra is the trivial extension of a tilted algebra $C$ by the canonical $C$-bimodule $\Ext^2_C(DC,C)$, where $D$ denotes the standard duality.
The quiver of the tilted algebra $C$ contains no oriented cycle and can be obtained from the quiver of $B_T$ by an admissible cut, that is, by deleting one arrow in every oriented $3$-cycle. 
Moreover, it has been shown in \cite{BFPPT} that an algebra is iterated tilted of  Dynkin type  $\AA$  of global
dimension at most two if and only if it is the quotient of a cluster-tilted algebra of the same
 type by an admissible cut.
It is then natural to ask, what kind of algebras we can get from admissible cuts of algebras $B_T$ coming from other surfaces. 

This motivates the definition of a \emph{surface algebra}, which is constructed by cutting a triangulation $T$ of a surface $(S,M)$ at internal triangles. Cutting an internal triangle $\triangle$ means replacing the triangle $\triangle$ by a quadrilateral $\triangle^\dd$ with one side on the boundary of the same surface $S$ with an enlarged set of marked points $M^\dd$, see Definition \ref{def cut}. Cutting as many internal triangles as we please, we obtain a partial triangulation $T^\dd$ of a surface with marked points $(S,M^\dd)$, to which we can associate an algebra $B_{T^\dd}=kQ_{T^\dd}/I_{T^\dd}$ in a very similar way to the construction of $B_T$ from $T$, see Definition \ref{def surface algebra}. This algebra $B_{T^\dd}$ is called a \emph{surface algebra of type} $(S,M)$.
{A surface algebra is called an \emph{admissible cut} if it is obtained by cutting every internal triangle exactly once.}

Our first main results are the following.
\begin{theorem*1} 
\it Every surface algebra is isomorphic to the endomorphism algebra of a partial cluster-tilting object in a generalized cluster category. More precisely, if the surface algebra $B_{T^\dd} $ is given by the 
cut $(S,M^\dd,T^\dd)$ of the triangulated surface $(S,M,T)$, then  \[B_{T^\dd}\cong \End_{\calc_{(S,M^\dd)}}T^\dd,\]
where $T^\dd$ denotes the object in cluster category $\calc_{(S,M^\dd)}$ corresponding to $T^\dd$. 

\end{theorem*1}

\begin{theorem*2}
\it
If $(S,M^\dd,T^\dd)$ is an admissible cut of $(S,M,T)$ then
\begin{enuma}
\item  $Q_{T^\dd}$ is an admissible cut of $Q_T$.
\item  $B_{T^\dd}$ is of global dimension at most two.
\item The tensor algebra of $B_{T^\dd}$ with respect to the $B_{T^\dd}$-bimodule 
		\[\Ext^2_{B_{T^\dd}}(DB_{T^\dd},B_{T^\dd})\]
	 is isomorphic to the   algebra $B_T$. 
\end{enuma}
\end{theorem*2}

{Part (c) of Theorem 2 implies that the cluster category associated to $B_{T^\dd}$ is the same as the cluster category associated to the surface $(S,M)$. Therefore, the surface algebras $B_{T^\dd}$ which are admissible cuts are algebras of cluster type $(S,M)$ in the sense of \cite{AO}.  In \cite{AO}, the authors study algebras of global dimension two whose cluster type is acyclic, which, in our setting, corresponds to admissible cuts from a disc or an annulus. }

Applying the result of \cite{BFPPT} mentioned above, we see that the admissible cut surface algebras  of the disc with $n+3$ marked points are precisely the 
iterated tilted algebras of type $\AA_n$ whose global dimension is at most two.
{For all surfaces other than the disc, we show that the surface algebras form several different classes under derived equivalence. For admissible cuts from the annulus, this recovers a result in \cite{AO}. }

To show that the surface algebras are not always derived equivalent, we use an invariant introduced by Avella-Alaminos and Geiss in \cite{AG}, which we call the AG-invariant for short. For each surface algebra $B_{T^\dd}$, we compute the AG-invariant in terms of the original triangulated surface $(S,M,T)$ and the number of cuts on each boundary component, see Theorem \ref{thm AG calc}. In particular, we show that already for the annulus there are different cuts coming from the same triangulation $T$ such that the corresponding surface algebras have different  AG invariants and hence are not derived equivalent. 

We want to point out here that  Ladkani has very recently also computed the AG invariant for the algebras $B_T$ (without cuts) in the special case where each boundary component has exactly one marked point, and used it to classify the surfaces $(S,M)$ such that any two triangulations $T_1,T_2$ of $(S,M)$ give rise to derived equivalent algebras $B_{T_1}$ and $B_{T_2}$, see \cite{Lad}.
Let us also mention that the AG-invariant has been used by Bobinski and Buan in \cite{BB} to classify algebras that are derived equivalent to cluster-tilted algebras of type $\AA$ and $\tilde{\AA}$.

We then study the module categories of the surface algebras. Since surface algebras are gentle, their indecomposable modules are either string modules or band modules. In the special case where the surface is not cut, the indecomposable modules and  the irreducible morphisms in the module category of the algebras $B_T$  have been described by Br\"ustle and Zhang in \cite{BZ}  in terms of the of generalized arcs on the surface $(S,M,T)$. One of the main tools used in \cite{BZ} is the description of irreducible morphisms between string modules by Butler and Ringel \cite{BR}.  We generalize the results of Br\"ustle and Zhang  to the case of arbitrary surface algebras $B_{T^\dd}$, and we describe the indecomposable modules in terms of certain permissible generalized arcs in the surface
$(S,M^\dd,T^\dd)$ and the irreducible morphisms in terms of  pivots of these arcs in the surface. In this way, we construct a combinatorial category $\cale^\dd$ of permissible generalized arcs in $(S,M^\dd,T^\dd)$ and define a functor  \[H\colon \cale^\dd\to \cmod B_{T^\dd}.\] This construction is inspired by the construction of the cluster category of type $\AA$ as a category of diagonals in a convex polygon by Caldero, Chapoton and the second author in \cite{CCS}. We then show the following theorem.

\begin{theorem*3}\
\begin{enuma}
\item The functor $H$ is faithful and induces a dense, faithful functor from $\cale\dag$ to
the category of string modules over $B_{T\dag}$. Moreover, $H$ maps irreducible morphisms to irreducible morphisms and commutes with Auslander-Reiten translations.
\item If the surface $S$ is a disc, then $H$ is an equivalence of categories.
\item If the algebra $B_{T\dag}$ is of finite representation type, then $H$ is an equivalence of categories.
\end{enuma}\end{theorem*3}

As an application of our results, we provide a geometric model for the module category of any iterated tilted algebra of type $\AA$ of global dimension two in terms of permissible diagonals in a partially triangulated polygon.

The paper is organized as follows.  In section \ref{sect 2}, we recall definitions and results that we need at a later stage. Section \ref{sect 3} is devoted to the definition of surface algebras and their fundamental properties. The computation of the AG-invariant is contained in section \ref{sect 4} and, in section \ref{sect 5}, we study the module categories of surface algebras in terms of arcs in the surface. The application to iterated tilted algebras of type $\AA$ is presented in section \ref{sect 6}.

\section{Preliminaries and notation}\label{sect 2}
In this section, we recall concepts that we need and fix notations.

\subsection{Gentle algebras, string modules and band modules}
Recall from \cite{AS} that a finite-dimensional algebra $B$ is {\em gentle} if it admits a
presentation $B=kQ/I$ satisfying the following {\nobreak conditions:}
\begin{itemize}
\item[(G1)] At each point of $Q$ start at most two arrows and stop at
  most two arrows.
\item[(G2)] The ideal $I$ is generated by paths of length 2.
\item[(G3)] For each arrow $ b$ there is at most one arrow $ a$
  and at most one arrow $ c$ such that $ a  b \in I$
  and $ b  c \in I$.
\item[(G4)] For each arrow $ b$ there is at most one arrow $ a$
  and at most one arrow $ c$ such that $ a  b \not\in I$
  and $ b  c \not\in I$.
\end{itemize}

An algebra $B=kQ/I$ where $I$ is generated by paths and $(Q,I)$ satisfies the two
conditions (G1) and (G4) is called a \df{string algebra} (see \cite{BR}), thus every
gentle algebra is a string algebra.
Butler and Ringel have shown in \cite{BR} that, for a finite dimensional string algebra, there are two types of indecomposable modules, the string modules and the band modules. 
We recall the definitions here. 

For any arrow $ b \in Q_1$, we denote by $ b^{-1}$ a \emph{formal inverse} for $ b$, with $s( b^{-1})=t( b)$, $t( b^{-1})=s( b)$, and we set $( b^{-1})^{-1} =  b$. 

		A \emph{walk} of length $n \geq 1$ in $Q$ is a sequence $w=a_1 \cdots a_n$ where each $a_i$ is an arrow or a formal inverse of an arrow and such that $t(a_i) = s(a_{i+1})$, for any $i \in \{1, \ldots, n-1\}$. The \emph{source of the walk} $w$ is $s(w)=s(a_1)$ and the \emph{target of the walk} $w$ is $t(w) = t(a_n)$. We define a \emph{walk $e_i$ of length zero} for any point $i \in Q_0$ such that $s(e_i)=t(e_i)=i$. 
		
				If $(Q,I)$ is a bound quiver, a \emph{string} in $(Q,I)$ is either a walk of length zero or a walk $w=a_1 \cdots a_n$ of length $n \geq 1$ such that $a_i \neq a_{i+1}^{-1}$ for any $i \in \{1, \ldots, n-1\}$ and such that no walk of the form $a_i a_{i+1} \cdots a_t$ nor its inverse belongs to $I$ for $1 \leq i$ and $t \leq n$.  A \emph{band} is a string $b=a_1\cdots a_n$ such that $s(b)=t(b)$, and any power of $b$ is a string, but $b$ itself is not a proper power of a string.

Any string $w$ gives rise to a string module $M(w)$ over $B$, whose underlying vector space consists of the direct sum of one copy of the field $k$ for each vertex in the string $w$, and the action of an arrow $a$ on $M(w)$ is induced by the identity morphism $1 \colon k\to k$ between the copies of $k$ at the endpoints of $a$, if $a$ or $a^{-1}$ is in $w$, and zero otherwise. Each band $b=a_1a_2\cdots a_n$ defines a family of band modules $M(b,\ell,\phi)$, where $\ell$ is a positive integer and $\phi$ is an automorphism of $k^\ell$. The underlying vector space of $M(b,\ell,\phi)$ is the direct sum of one copy of $k^\ell$ for each vertex in $b$, and the action of an arrow $a$ is induced by the identity morphism $1\colon k^\ell\to k^\ell$, if $a=a_1,a_2,\ldots,a_{n-1}$, by the automorphism $\phi$ if $a=a_n$ and the action is zero if $a$ is not in $w$.

\subsection{The AG-invariant}
First we recall from \cite{AG} the combinatorial definition of the derived invariant of
Avella-Alaminos and Geiss. We will refer to this as the AG-invariant. Throughout let $A$ be a gentle
$k$-algebra with bound quiver $(Q,I)$, $Q=(Q_0,Q_1,s,t)$ where $s,t\colon Q_1\to Q_0$ are the source and
target functions on the arrows.

\begin{dfn}
A \df{permitted path} of $A$ is a path $C=a_1a_2\cdots a_n$ which is not in $I$. We
say a permitted path is a \df{non-trivial permitted thread} of $A$ if for all arrows $ b\in Q_1$,
neither $ b C$ or $C b$ is not a permitted path. These are the `maximal' permitted paths of
$A$. Dual to this, we define the \df{forbidden paths} of $A$ to be a sequence 
$F= a_1a_2\cdots a_n$ such that  $a_i\ne a_j$ unless $i=j$, and 
$a_ia_{i+1}\in I$, for $i=1,\dots,n-1$. A forbidden path $F$ is a \df{non-trivial forbidden thread} 
if for all $ b\in Q_1$, neither $ b F$ or $F b$ is a forbidden path.
We also require \df{trivial permitted} and \df{trivial forbidden threads}. Let $v\in Q_0$ such that
there is at most one arrow starting at $v$ and at most one arrow ending at $v$.
Then
the constant path $e_v$ is a trivial permitted thread if when there are arrows $ b, c\in
Q_1$ such that $s( c)=v=t( b)$ then $ b c\not\in I$. Similarly, $e_v$ is a trivial forbidden
thread if when there are arrows $ b, c\in Q_1$ such that $s( c)=v=t( b)$ then 
$ b c\in I$.
  
Let $\calh$ denote the set of all permitted threads and $\calf$ denote the set of all forbidden threads.
\end{dfn}

Notice that each arrow in $Q_1$, is both a permitted and a forbidden
path. Moreover, the constant path at each sink and at each source will 
simultaneously satisfy the definition for a permitted and a forbidden 
thread because there are no paths going through $v$.

We fix a choice of functions $\sigma,\e\colon Q_1\to \{-1,1\}$ characterized by the following conditions.
\begin{enumerate}
\item If $ b_1\neq  b_2$ are arrows with $s( b_1)=s( b_2)$, then 
	$\sigma( b_1)=-\sigma( b_2)$.
\item If $ b_1\neq  b_2$ are arrows with $t( b_1)=t( b_2)$, then $\e( b_1)=-\e( b_2)$.
\item If $ b, c$ are arrows with $s( c)=t( b)$ and $ b c\not\in I$, 
then $\sigma( c)=-\e( b)$.
\end{enumerate}
Note that the functions need not be unique. Given a pair $\sigma$ and $\e$, we can define
another pair $\sigma':=-1\sigma$ and $\e':=-1\e$.

These functions naturally extend to paths in $Q$. Let 
$C= a_1a_{2}\cdots a_{n-1}a_n$ be a path. Then 
$\sigma(C) = \sigma(a_1)$ and $\e(C)=\e(a_n)$. We can also extend these functions 
to trivial threads. Let $x,y$ be vertices in $Q_0$, $h_x$ the trivial permitted thread at $x$, and
$p_y$ the trivial forbidden thread at $y$. Then we set
\begin{align*} 
\sigma(h_x) = -\e(h_x) &= -\sigma(a), & \IF s(a)&=x, \text{ or}\\
 \sigma(h_x) = -\e(h_x) &= -\e( b), & \IF t( b)&=x
 \end{align*}
and
\begin{align*}
\sigma(p_y) = \e(p_y)& = -\sigma( c), &  \IF s( c) &= y, \text{ or}\\
\sigma(p_y) =\e(p_y)&  = -\e(d), & \IF t(d) &= y ,
\end{align*} 
where $a, b, c,d\in Q_1$. Recall that these arrows are unique if they exist. 

\begin{dfn}
The AG-invariant $\AG(A)$ is defined to be a function depending on the ordered pairs generated 
by the following algorithm.
\begin{enumerate}
\item \begin{enumerate}
	\item Begin with a permitted thread of $A$, call it $H_0$.
	\item \label{alg:HtoF} To $H_i$ we associate $F_i$, the forbidden thread which ends at 
		$t(H_i)$ and such that  $\e(H_i)=-\e(F_i)$. Define $\varphi(H_i) := F_i$.
	\item \label{alg:FtoH}To $F_i$ we associate $H_{i+1}$, the permitted thread which starts 
		at $s(F_i)$ and such  that $\sigma(F_i)=-\sigma(H_{i+1})$. Define $\psi(F_i):= H_{i+1}$.
	\item Stop when $H_n=H_0$ for some natural number $n$.  Define $m=\sum_{i=1}^n \ell(F_i)$, 
		where $\ell(C)$  is the length (number of arrows) of a path $C$. In this way we obtain the 
		pair $(n,m)$. 
	\end{enumerate}
\item Repeat (1) until all permitted threads of $A$ have occurred.
\item For each oriented cycle in which each pair of consecutive arrows form a relation, we associate 
the ordered pair $(0,n)$, where $n$ is the length of the cycle. 
\end{enumerate}
We define $\AG(A)\colon \NN^2\to \NN$ where $\AG(A)(n,m)$ is the number of times the ordered pair 
$(n,m)$ is formed by the above algorithm. 
\end{dfn}

The algorithm defining $\AG(A)$ can be thought of as dictating a walk in the quiver $Q$, where we
move forward on permitted threads and backward on forbidden threads.

\begin{remark}\label{rmk:bijection}
Note that the steps \eqref{alg:HtoF} and  \eqref{alg:FtoH} of this algorithm give two different bijections $\varphi$ and $\psi$ between the set of permitted
threads $\calh$  and the set of forbidden threads which do not start and end in 
the same vertex. 
We will often refer to the permitted (respectively forbidden) thread ``corresponding'' to a given 
forbidden (respectively permitted) thread. This correspondence is referring to the bijection $\varphi$ (respectively $\psi$). 
\end{remark}

\begin{thm}\label{thm AG}
\begin{enuma}
\item Any two derived equivalent gentle algebras have the same AG-invariant.
\item Gentle algebras which have at most one (possibly non-oriented) cycle in their quiver are derived equivalent if and only if they have the same AG-invariant.
\end{enuma}
\end{thm}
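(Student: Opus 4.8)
This statement is due to Avella-Alaminos and Geiss \cite{AG}; I outline the strategy. Part (a) is the genuine derived-invariance statement, and part (b) combines it with a classification of the derived equivalence classes in the one-cycle case. For (a) the plan is to exploit the explicit combinatorial description of the bounded derived category $D^b(\cmod A)$ of a gentle algebra $A=kQ/I$: its indecomposable objects are indexed by \emph{homotopy strings} and \emph{homotopy bands}, that is, by walks in $Q$ which---unlike the strings recalled above---are allowed to pass through relations in a controlled way (Bekkert--Merklen). The first step is to translate the output of the AG algorithm into this language. A full pass through steps (1a)--(1d) traces a closed walk in $Q$ alternating permitted threads (run forwards) with inverses of forbidden threads (run backwards), which closes up because the algorithm terminates at $H_n=H_0$; using the defining properties of $\sigma$ and $\e$ one checks that no ``peak'' occurs at the junctions, so this is a homotopy string with coinciding source and target. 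Together with the relation-cycles of step (3), these closed walks index precisely the homotopy bands of $A$ and certain distinguished homotopy string complexes, and the pair $(n,m)$ records, for the corresponding object, the number $n$ of forbidden segments traversed and their total length $m$ (a relation-cycle of length $k$ contributing $(0,k)$).

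The crux of (a) is then to recognise $n$ and $m$ as \emph{intrinsic} numerical invariants of the triangulated category $D^b(\cmod A)$ rather than of the chosen presentation $kQ/I$. Concretely, one shows that for the family of (band or string) complexes attached to a closed thread-walk, the integers $n$ and $m$ can be recovered from the Auslander--Reiten quiver of $D^b(\cmod A)$ together with the graded dimensions of the self-extension spaces $\bigoplus_i\Hom_{D^b}(X,X[i])$ for $X$ in that family. Since a derived equivalence is by definition a triangle equivalence $D^b(\cmod A)\to D^b(\cmod A')$, it induces a bijection on shift-orbits of indecomposables that preserves the Auslander--Reiten translate and all such dimensions; it therefore carries the multiset of pairs $(n,m)$ and $(0,k)$ produced for $A$ bijectively onto that produced for $A'$, so that $\AG(A)=\AG(A')$. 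The main obstacle is exactly this identification: the combinatorics of homotopy strings is delicate, and one must check that the two ``lengths'' entering each AG pair are precisely the data seen by the triangulated structure; this is the technical heart of \cite{AG}.

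For part (b) one first invokes the derived-equivalence classification of gentle algebras whose quiver has at most one (possibly unoriented) cycle: every such algebra is derived equivalent to an algebra in an explicit list of normal forms. If $Q$ is a tree the algebra is derived equivalent to a hereditary algebra---of type $\AA_n$ in the case of a disc, by the theory of iterated tilted algebras of type $\AA$ (cf.\ \cite{BFPPT}); if $Q$ has exactly one cycle the normal forms fall into a short list indexed by a few discrete parameters, cf.\ \cite{BB}. The second step is to compute $\AG$ on each normal form---for hereditary Dynkin type it is concentrated in a single pair, whereas for the one-cycle normal forms it has two nontrivial pairs whose entries encode the parameters---and to verify that distinct normal forms have distinct AG-invariants. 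Combining the two steps: two such algebras with equal AG-invariant are derived equivalent to the same normal form, hence to each other, and the converse is part (a). The real work here is the normal-form classification together with the bookkeeping that $\AG$ separates the list; both are carried out in \cite{AG}, with refinements in \cite{BB}.
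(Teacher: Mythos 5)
The paper does not prove this theorem; its ``proof'' is the single line ``See [Theorems A and C] of \cite{AG}.'' You correctly recognise that this is an imported result and explicitly frame your write-up as an outline of the strategy of \cite{AG}, so there is no conflict with the paper. That said, your sketch of part (a) is not quite faithful to what Avella-Alaminos and Geiss actually do. Their argument does not pass through the Bekkert--Merklen classification of indecomposables of $D^b(\cmod A)$ by homotopy strings and bands; instead they relate the output pairs $(n,m)$ of the algorithm to combinatorial data read off the repetitive algebra $\hat A$ (equivalently, the trivial extension), and then appeal to Happel's equivalence $D^b(\cmod A)\simeq \underline{\cmod}\,\hat A$ together with the invariance under derived equivalence of the stable Auslander--Reiten structure of $\hat A$. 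Your higher-level claim---that $(n,m)$ is ultimately recovered from triangulated-categorical AR data---is the right slogan, but the intermediate object is the stable category of the repetitive algebra rather than an intrinsic count on homotopy strings, and the ``no peak at the junctions'' bookkeeping you gesture at is done there, not in the Bekkert--Merklen indexing. For part (b), your description (reduce to explicit normal forms, compute $\AG$ on each, check that $\AG$ separates them, and use (a) for the converse) is indeed the shape of the argument in \cite{AG}, and the refinement for one-cycle quivers is what \cite{BB} carries further. In short: the paper only cites, you give a plausible but partly non-faithful outline of the cited proof; the substantive inaccuracy is the reliance on the Bekkert--Merklen description of $D^b$, which is not the mechanism used in \cite{AG}.
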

\begin{proof}
See \cite[Theorems A and C]{AG}.
\end{proof}

\subsection{Surfaces and triangulations}\label{sect surfaces} 

In this section, we recall a construction of \cite{FST} in the
case of surfaces without punctures.

Let $S$ be a connected oriented unpunctured Riemann surface with boundary $\partial S$ and let $M$ be a
non-empty finite subset of the boundary $\partial S$. The elements of $M$ are called \emph{marked
points}. We will refer to the pair $(S,M)$ simply by \emph{unpunctured surface}. The orientation of
the surface will play a crucial role.

We say that two curves in $S$ \emph{do not cross} if they do not intersect
each other except that endpoints may coincide.

\begin{dfn}\label{def arc}
An \emph{arc} $\zg$ in $(S,M)$ is a curve in $S$ such that 
\begin{itemize}
\item[(a)] the endpoints are in $M$,
\item[(b)] $\zg$ does not cross itself,
\item[(c)] the relative interior of $\zg$ is disjoint from $M$ and
  from the boundary of $S$,
\item[(d)] $\zg$ does not cut out a monogon or a digon. 
\end{itemize}   
\end{dfn}     

\begin{dfn}
A \emph{generalized arc} is a curve in $S$ which satisfies the conditions (a), (c) and (d)
of Definition \ref{def arc}.
\end{dfn}

 Curves that connect two
marked points and lie entirely on the boundary of $S$ without passing
through a third marked point are called \emph{boundary segments}.
Hence an arc is a curve between two marked points, which does not
intersect itself nor the boundary except possibly at its endpoints and
which is not homotopic to a point or a boundary segment.

Each generalized arc is considered up to isotopy inside the class of such curves. Moreover, each 
generalized arc is considered up to orientation, so if a generalized arc has endpoints $a,b\in M$ then it can
 be represented by a curve that runs from $a$ to $b$, as well as by a curve that runs from $b$ to $a$.

For any two arcs $\zg,\zg'$ in $S$, let $e(\zg,\zg')$ be the minimal number of crossings of
$\zg$ and $\zg'$, that is, $e(\zg,\zg')$ is the minimum of the numbers of crossings of
curves $\za$ and $\za'$, where $\za$ is isotopic to $\zg$ and $\za'$ is isotopic to $\zg'$.
Two arcs $\zg,\zg'$ are called \emph{non-crossing} if $e(\zg,\zg')=0$. A
\emph{triangulation} is a maximal collection of non-crossing arcs.
The arcs of a triangulation cut the surface into
\emph{triangles}. Since $(S,M)$ is an unpunctured surface, the three sides of each triangle
are distinct (in contrast to the case of surfaces with punctures). A triangle in $T$ is
called an \emph{internal triangle} if none of its sides is a boundary segment. We often
refer to the triple $(S,M,T)$ as a \emph{triangulated surface}.

Any two triangulations of $(S,M)$ have
the same number of elements, namely \[n=6g+3b+|M|-6,\]  where $g$ is the
genus of $S$, $b$ is the number of boundary components and $|M|$ is the
number of marked points. The number $n$ is called the \emph{rank} of $(S,M)$.
Moreover, any two triangulations of $(S,M)$ have
the same number of triangles, namely\[ n-2(g-1)-b.
\]
Note that $b> 0$ since the set $M$ is not empty. We do not allow $n$ to be negative or zero, so we
have to exclude the cases where $(S,M)$ is a disc with one, two or three marked points. Table
\ref{table 1} gives some examples of unpunctured surfaces.

\begin{table}
\begin{center}
  \begin{tabular}{ c | c | c || l  }
  \  $b$\ \  &\ \ $ g$ \ \   & \ \ $ \vert M\vert$  \ \  &\  surface \\ \hline
    $1$ & 0 & $n+3$ & \ disc \\ 
    1 & 1 & $n-3$ & \ torus with disc removed \\
    1 & 2 & $n-9 $& \ genus 2 surface with disc removed \\\hline 
    2 & 0 & $n$ & \ annulus\\
    2 & 1 & $n-6$ & \ torus with 2 discs removed \\ 
    2 & 2 & $n-12$ & \ genus 2 surface with 2 discs removed \\ \hline
    3 & 0 & $n-3$ & \ pair of pants \\ 
  \end{tabular}
  \medskip
\end{center}
\caption{Examples of unpunctured surfaces}\label{table 1}
\end{table}

\subsection{Jacobian algebras from surfaces}
If $T=\{\tau_1,\tau_2,\ldots,\tau_n\}$ is a triangulation of an unpunctured surface $(S,M)$, we define a quiver $Q_T$ as follows.
$Q_T$ has $n$ vertices, one for each arc in $T$. We will denote the vertex corresponding to $\tau_i$
simply by $i$. The number of arrows from $i$ to $j$ is the number of triangles $\triangle$ in $T$ such
that the arcs $\tau_i,\tau_j$ form two sides of $\triangle$, with $\tau_j$ following $\tau_i$ when going
around the triangle $\triangle$ in the \orientationofthearrow orientation, see Figure \ref{fig quiver}
for an example. Note that the interior triangles in $T$ correspond to oriented 3-cycles in $Q_T$.
 
\begin{figure}
\centering
\begin{tikzpicture}[scale=.55]
{ []
\draw (0,0) circle[radius=5cm] ;
\filldraw[fill=black!20] (1.5,1.5) circle[radius=1cm] ;
\filldraw[fill=black!20] (-1.5,-1.5) circle[radius=1cm] ;

\foreach \x [count=\n] in {0,1,...,11}{
	\node[name=p\n] at ($(1.5,1.5)+(30*\x:1cm)$) {};
	\node[name=n\n] at ($(-1.5,-1.5)+(30*\x:1cm)$) {};
	\node[name=o\n] at (30*\x:5cm) {};
}

\node[solid] at (90:5cm) {};
\node[solid] at (180:5cm) {};
\node[solid] at ($(-1.5,-1.5)+(60:1cm)$) {};
\node[solid] at ($(1.5,1.5)+(180:1cm)$) {};
\node[solid] at ($(1.5,1.5)+(-30:1cm)$) {};

\draw (o7.center) -- (p7.center) node[int] {$\tau_1$} 
	(o7.center) ..controls +(0:2cm) and +(135:1cm) .. (n3) node[int] {$\tau_2$} 
	(o7.center) .. controls +(-75:2cm) and +(170:1cm) .. ($(o9)!.75!(n10)$) node[int] {$\tau_3$}
	 ($(o9)!.75!(n10)$) .. controls +(-10:.5cm) and +(200:.5cm) .. ($(n12)!.3!(o10)$) 
	 ($(n12)!.3!(o10)$) .. controls +(20:1cm) and (-70:1cm) .. (n3.center)
	 (p12.center) .. controls +(60:1cm) and +(-60:.5cm) .. ($(p4)!.33!(o3)$) node[int,pos=.9] {$\tau_7$}
	 ($(p4)!.33!(o3)$) .. controls +(130:.cm) and +(-40:1cm) .. (o4.center) 
	 (n3.center) .. controls +(0:3cm) and +(-90:1cm) .. ($(p1)!.7!(o1)$) node[int] {$\tau_4$}
	 ($(p1)!.7!(o1)$) .. controls +(90:1cm) and +(-20:1cm) .. (o4.center) 
	 (o4.center) -- (p7.center) node[int] {$\tau_8$} 
	 (p7.center) -- (n3.center)  node[int] {$\tau_6$}
	  (p12.center) .. controls +(-110:1cm) and +(30:1cm) .. (n3.center)  node[int] {$\tau_5$}; 
}
{[xshift=7cm,scale=1.5]
	\node[name=1] at (1,1) {1};
	\node[name=2] at (0,0) {2};
	\node[name=3] at (1,-1) {3};
	\node[name=4] at (5,-1) {4};
	\node[name=5] at (3.5,0) {5};
	\node[name=6] at (2,0) {6};
	\node[name=7] at (5,1) {7};
	\node[name=8] at (3,1.5) {8};
	\draw[->] (1) edge (2) edge (8)
		(2) edge (6) edge (3)
		(6) edge (1) edge (5) 
		(5) edge (4)
		(4) edge (7) edge (3)
		(7) edge (5) edge (8);
}
\end{tikzpicture}
\caption{A triangulation and its quiver} \label{fig quiver}
\end{figure}
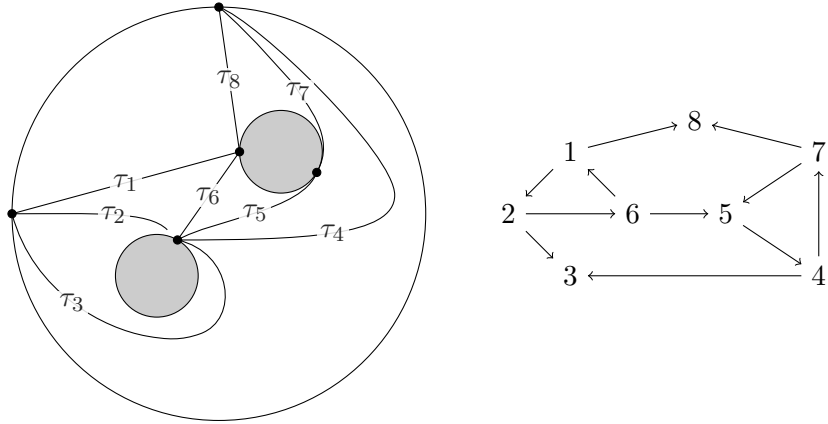   

In \cite{FST}, the authors associate a cluster algebra $\mathcal{A}(Q_T)$ to this quiver; the
cluster algebras obtained in this way are called cluster algebras from (unpunctured) surfaces and
have been studied in \cite{FST,FT,S3,MSW, FeShTu}, and the corresponding cluster categories in
\cite{CCS, BZ}.

Following \cite{ABCP,LF}, let $W$ be the sum of all oriented 3-cycles in $Q_T$. Then $W$ is a
potential, in the sense of \cite{DWZ}, which gives rise to to a Jacobian algebra $B_T=
\textup{Jac}(Q_T,W)$, which is defined as the quotient of the path algebra of the quiver $Q_T$ by
the two-sided ideal generated by the subpaths of length two of each oriented 3-cycle in $Q_T$.

\begin{prop}\label{prop 2.7} $B_T$ is a gentle algebra. \end{prop}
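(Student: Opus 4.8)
The plan is to verify the four defining conditions (G1)--(G4) for the presentation $B_T = kQ_T/I_T$, where $I_T$ is generated by the length-two subpaths of the oriented $3$-cycles coming from internal triangles. The combinatorial structure of the triangulated surface controls everything, so the first step is to understand the local picture at a vertex $i$ of $Q_T$, i.e.\ around an arc $\tau_i$. An arc $\tau_i$ is a side of exactly two triangles (or of one triangle if it bounds a self-folded region, but there are no punctures here, so it is genuinely two distinct triangles, counting with multiplicity if the same triangle sits on both sides). In each such triangle, $\tau_i$ has a ``predecessor'' side and a ``successor'' side in the \orientationofthearrow orientation, and this is what produces at most one incoming and at most one outgoing arrow at $i$ \emph{per triangle}. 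Since $\tau_i$ lies in two triangles, we get at most two arrows starting at $i$ and at most two arrows stopping at $i$: this is (G1).

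Next I would establish (G2): by construction $I_T$ is generated by the subpaths of length two of the oriented $3$-cycles, so it is generated by paths of length $2$ by fiat. The substantive conditions are (G3) and (G4), and here the key observation is that the two arrows stopping at $i$ (if there are two) come from the two distinct triangles containing $\tau_i$, and likewise for the two arrows starting at $i$. Fix an arrow $b$ with target $i$; it belongs to a unique triangle $\triangle$, and within $\triangle$ there is at most one arrow $c$ starting at $i$, namely the one completing the $3$-cycle of $\triangle$ (and this exists precisely when $\triangle$ is internal). For that $c$ we have $bc \in I_T$. Any other arrow $c'$ starting at $i$ comes from the \emph{other} triangle and hence $bc' \notin I_T$, since a relation in $I_T$ only ever involves two consecutive arrows of a single $3$-cycle. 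This gives the ``at most one $c$ with $bc\in I_T$'' half of (G3), the ``at most one $a$ with $ab \in I_T$'' half by the symmetric argument on the source side, and simultaneously the ``at most one $c'$ with $bc' \notin I_T$'' half of (G4) — because there are at most two arrows starting at $i$ total, one of which ($c$) satisfies $bc \in I_T$, leaving at most one with $bc'\notin I_T$. The source side of (G4) is symmetric.

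I expect the main obstacle to be purely bookkeeping: being careful that an oriented $3$-cycle in $Q_T$ really does force \emph{all three} of its length-two subpaths into $I_T$ (so that no two consecutive arrows of an internal triangle's $3$-cycle compose nontrivially), and conversely that $I_T$ contains \emph{nothing else}, so that two arrows belonging to different triangles never compose to zero. One should also double-check the degenerate configurations allowed on an unpunctured surface — e.g.\ a triangle two of whose sides are boundary segments, or an arc both of whose flanking triangles coincide as a set — to confirm that the count ``at most two in, at most two out'' and ``within one triangle, at most one relation'' is never violated; the non-existence of self-folded triangles (guaranteed since $(S,M)$ is unpunctured) is what makes this clean. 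Once the local analysis at each vertex is in place, (G1)--(G4) follow, and hence $B_T$ is gentle. Finally, one remarks that $B_T$ is finite-dimensional: $Q_T$ has no oriented cycles other than the $3$-cycles from internal triangles, and each such $3$-cycle is killed in degree two by $I_T$, so there are only finitely many paths not in $I_T$.
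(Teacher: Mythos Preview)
Your verification of conditions (G1)--(G4) is correct and is essentially the argument in \cite{ABCP}; the paper's own proof is nothing more than a citation of that reference, so you are supplying what the paper defers. The local analysis you describe --- each arc bounds two triangles, each triangle contributes at most one incoming and one outgoing arrow at the vertex corresponding to a given side, and the relations are precisely the consecutive pairs within a single internal triangle's $3$-cycle --- is exactly the mechanism.

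There is, however, a gap in your closing remark on finite-dimensionality (which you do need, since the paper's definition of gentle presupposes it). It is \emph{not} true that $Q_T$ has no oriented cycles other than the $3$-cycles from internal triangles: in the annulus example of Figure~\ref{fig ex annulus} the quiver contains the oriented $4$-cycle $2\to 1\to 5\to 3\to 2$, and the length-four path $2\to 1\to 5\to 3\to 2$ lies outside $I_T$, since each of its three length-two subpaths pairs arrows from distinct triangles. What actually bounds the length of permitted paths is a refinement of the observation you already used for (G4): if $a_ia_{i+1}\notin I_T$ then $a_{i+1}$ comes from the \emph{other} triangle at $\tau_{t(a_i)}$, and an orientation check then shows that the arcs visited by a permitted path $a_1\cdots a_n$ all share a common endpoint $v\in M$. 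In other words, permitted paths lie inside complete fans (cf.\ Lemma~\ref{lem permitted to v}), and fans have finite width; this is how \cite{ABCP} establishes finite-dimensionality. Your ``only $3$-cycles'' claim happens to hold when $S$ is a disc, but fails already for the annulus.
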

\begin{proof}
This is shown in \cite{ABCP}.
\end{proof}

\subsection{Cuts and admissible cuts} Let $Q$ be a quiver. An oriented cycle $C$ in $Q$ is called \emph{chordless} if $C$ is a full subquiver of $Q$ and has the property that for every vertex $v$ in $C$, there is exactly one arrow in $C$ starting and one arrow in $C$ ending at $v$. We define a \emph{cut} of $Q$ to be a
subset of the set of arrows of $Q$ with the property that each arrow in the cut lies in an oriented
chordless cycle in $Q$. Following \cite{Fernandez}, such a cut is called \emph{admissible} if it
contains exactly one arrow of each oriented chordless cycle of $Q$.

Now let $C=kQ/I$ be a quotient of the path algebra of $Q$ by an admissible ideal $I$. Then an
algebra is said to be obtained from $C$ by a cut if it is isomorphic to a quotient $kQ/\langle I\cup
\zG \rangle$, where $\zG$ is a cut of $Q$.

\begin{prop} \label{prop 2.8} Any algebra obtained by a cut from a gentle algebra is gentle. \end{prop}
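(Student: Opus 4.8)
The plan is to verify directly that the quiver-and-relations presentation obtained after the cut still satisfies the four gentle conditions (G1)--(G4). Let $C = kQ/I$ be gentle and let $\zG$ be a cut of $Q$; set $B = kQ/\langle I \cup \zG\rangle$. The key observation is that $\zG$ only adds relations — it does not change the quiver $Q$ at all — so (G1) (at most two arrows in, at most two arrows out at each vertex) is inherited unchanged. For (G2), the ideal $I$ is generated by paths of length two by hypothesis, and each arrow in $\zG$ lies on an oriented chordless cycle, so in particular it is not a loop; hence $\langle I \cup \zG\rangle$ is generated by paths of length $\geq 2$. A small point worth a sentence: one should pass to a minimal generating set, discarding any arrow of $\zG$ that already lies in $I$ (impossible here since $I$ is generated by length-two paths and cut arrows have length one, but it is clean to note that $\langle I\cup\zG\rangle$ is still generated by $I$ together with the arrows of $\zG$, the latter being length-one paths).

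The substance is in (G3) and (G4), and the cleanest way to handle both is to argue that passing from $C$ to $B$ only \emph{removes} certain length-two paths from the class of ``non-relations.'' Concretely, a path $ab$ of length two is a relation in $B$ iff $ab \in I$, or $a \in \zG$, or $b \in \zG$. First I would check (G4): fix an arrow $b$; in $C$ there is at most one arrow $a$ with $ab \notin I$ and at most one arrow $c$ with $bc \notin I$. The set of arrows $a$ with $ab \notin \langle I\cup\zG\rangle$ is a subset of $\{a : ab\notin I\}$ (further restricted when $b\in\zG$, in which case it is empty), so it still has at most one element; likewise on the other side. Hence (G4) holds for $B$. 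For (G3): fix an arrow $b$; in $C$ there is at most one $a$ with $ab \in I$ and at most one $c$ with $bc\in I$. In $B$, the arrows $a$ with $ab$ a relation are those with $ab \in I$ together with, possibly, \emph{all} arrows $a$ with $t(a)=s(b)$ in the case $b \in \zG$. This is the one place where the argument is not a pure ``subset'' inclusion, so this is the step I expect to be the main obstacle, and it is where condition (G1) and the chordless-cycle hypothesis must be used.

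To close that gap, suppose $b \in \zG$ and $t(a_1) = t(a_2) = s(b)$ with $a_1 \neq a_2$. By (G1) for $Q$ (hence for $B$) these are the only two arrows ending at $s(b)$. I claim at most one of $a_1 b, a_2 b$ can fail to already be a relation of $C$ — equivalently, at most one of $a_1, a_2$ satisfies $a_i b \notin I$ — which is exactly (G4) for $C$ applied to the arrow $b$. So, say $a_2 b \in I$ already; then in $B$ the arrows $a$ with ``$ab$ a relation'' are: all $a$ with $ab\in I$ (at most one such by (G3) for $C$, and it must be $a_2$ if $a_2\ne a_1$), plus $a_1$ (newly a relation because $b\in\zG$) — but if the (G3)-for-$C$ arrow exists it is $a_2$, and $a_1\ne a_2$ gives us two, a contradiction, \emph{unless} $a_1 b$ was already in $I$, i.e.\ unless there was in fact at most one arrow ending at $s(b)$ that is not killed. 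I would therefore phrase it as: when $b\in\zG$, condition (G4) for $C$ forces at most one arrow $a$ with $t(a)=s(b)$ to have $ab\notin I$; combined with (G3) for $C$, the total number of arrows $a$ with $ab$ a relation in $B$ is still at most one. The symmetric argument with $b$ replaced on the other side (using the case $b\in\zG$ acting through $bc$) handles the ``$c$'' clause of (G3). Finally I would remark that the same bookkeeping, read with the roles of permitted and forbidden interchanged, is implicit in the AG-invariant machinery, but for this proposition the direct check above suffices.
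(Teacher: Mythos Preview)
Your approach has a genuine gap at the very first step. Being gentle is a property of the \emph{algebra}: one must exhibit \emph{some} presentation $B = kQ'/I'$ satisfying (G1)--(G4). You choose to keep the quiver $Q$ unchanged and enlarge the ideal to $\langle I\cup\zG\rangle$, but this presentation fails (G2) outright: the arrows in $\zG$ are paths of length one, so $\langle I\cup\zG\rangle$ is generated by paths of lengths one and two, not by paths of length two. Your sentence ``each arrow in $\zG$ lies on an oriented chordless cycle, so in particular it is not a loop; hence $\langle I\cup\zG\rangle$ is generated by paths of length $\geq 2$'' confuses ``not a loop'' with ``length at least two''; an arrow always has length one.

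Even setting (G2) aside, your analysis of (G3) does not close. If $b\in\zG$ and two distinct arrows $a_1,a_2$ end at $s(b)$, then $a_1b$ and $a_2b$ both lie in $\langle I\cup\zG\rangle$ simply because $b$ does, so (G3) fails for this presentation. You yourself arrive at ``$a_1\neq a_2$ gives us two, a contradiction'' --- but nothing is contradicted; this just shows that the presentation you picked is not gentle. Such configurations do occur: take $Q$ with arrows $a_1\colon 1\to 2$, $a_2\colon 3\to 2$, $b\colon 2\to 4$, $c\colon 4\to 3$ and $I=(a_2b,\,bc,\,ca_2)$; this is gentle, $b$ lies on the chordless cycle $2\to 4\to 3\to 2$, and after cutting $b$ both $a_1b$ and $a_2b$ are in the new ideal.

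The fix is exactly what the paper does in one line: quotienting $kQ/I$ by an arrow $b$ is the same as deleting $b$ from the quiver and discarding from the generating set of $I$ the length-two paths that involve $b$. So one passes to the presentation $(Q\setminus\zG,\,I')$, where $I'$ is generated by those length-two relations of $I$ not using any arrow of $\zG$. Now (G1)--(G4) are immediate: deleting arrows can only decrease the counts in (G1), $I'$ is still generated by length-two paths (G2), and for each surviving arrow $b$ the sets $\{a:ab\in I'\}$, $\{c:bc\in I'\}$, $\{a:ab\notin I'\}$, $\{c:bc\notin I'\}$ are all contained in the corresponding sets for $(Q,I)$ restricted to arrows of $Q\setminus\zG$, hence have at most one element.
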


\begin{proof} If a bound quiver satisfies the conditions (G1)--(G4) of the definition of a
gentle algebra, then deleting an arrow will produce a bound quiver that still satisfies
these conditions.
\end{proof}

\section{Algebras from surfaces without punctures} \label{sect 3}
In this section, we introduce the surface algebras and develop their fundamental properties.

Let $(S,M)$ be a surface without punctures, $T$ a
triangulation, $Q_T$ the corresponding quiver, and $B_T$ the Jacobian algebra. Throughout this
section, we assume that, if $S$ is a disc, then $M$ has at least $5$ marked points, thus we exclude
the disc with $4$ marked points. Recall that the oriented $3$-cycles in the quiver $Q_T$ are in
bijection with the interior triangles in the triangulation $T$.

\subsection{Cuts of triangulated surfaces}
We want to define a geometric object which corresponds to cuts of the quiver $Q_T$, and, to that
purpose, we modify the set $M$ to a new set of marked points $M^\dagger$, and we modify the
triangulation $T$ to a partial triangulation $T^\dagger$ of the surface $(S,M^\dagger)$.\footnote{We use the dagger symbol $\dd$ to  indicate the cut.}

We need some terminology. Each arc has two ends defined by trisecting the arc and deleting the
middle piece. If $\triangle$ is a triangle with sides $\za,\zb,\zg$, then the six ends of the three sides
can be matched into three pairs such that each pair forms one of the angles of the triangle $\triangle$.

Let $\triangle$ be an internal triangle of $T$ and let $v\in M$ be one of its vertices. Let $\zd'$ and
$\zd''$ be two curves on the boundary, both starting at $v$ but going in opposite directions, and
denote by $v'$ and $v''$ there respective endpoints. Moreover, choose $\zd',\zd''$ short enough 
such that $v'$ and $v''$ are not in $M$, and no point of $M$ other that $v$ lies on the curves
$\zd',\zd''$.  We can think of $v',v''$ being obtained by moving the point $v$ a small amount in 
either direction along the boundary, see Figure \ref{fig cut def} for an example.
Define \[\chi_{v,\triangle} (M) =\left(M\setminus\{v\}\cup\{v',v''\}\right).\]
\begin{figure}
\begin{tikzpicture}[scale=1.5,lbl/.style={fill=white,opacity=.66,shape=circle,inner sep=1,outer sep =2}]
{[]
\shade[shading=axis,bottom color=black!20, top color=black!5]  (0,0) rectangle (3,.25);
\draw (0,0) -- (3,0) node[solid,pos=.5,scale=1,name=v] {};
\node[above,lbl] at (v) {$v$};
\draw (v.center) -- ++(-60:2cm) node[int,pos=.8] {$\beta$}
	-- ++(180:2cm) node[above,pos=.66] {$\triangle$}
	-- (v.center) node[int,pos=.2] {$\alpha$};
\draw[line width=1.5pt]
	(v.center) -- +(-60:.75cm) node[right,pos=.4] {$\bar\beta$}
	(v.center) -- +(-120:.75) node[left,pos=.4] {$\bar\alpha$}
	(v.center) -- +(180:.75cm) node[above,lbl,pos=.8] {$\delta'$}
	(v.center) -- +(0:.75) node[above,lbl,pos=.8] {$\delta''$};
}
\draw[->] (3,-.86) -- (4,-.86) node[above,pos=.5] {$\cut{v}{\alpha}{\beta}$};
{[xshift=4cm]
\shade[shading=axis,bottom color=black!20, top color=black!5]  (0,0) rectangle (3,.25);
\draw (0,0) -- (3,0) node[solid,pos=.25,scale=1,name=v'] {} node[solid,pos=.75,scale=1,name=v''] {};
\node[above,lbl] at (v') {$v'$};
\node[above,lbl] at (v'') {$v''$};
\draw  (v') -- ++(270:1.73cm) node[int] {$\alpha\dag$}
	  -- ++(0:1.5cm) node[above,pos=.25] {$\triangle\dag$}
	  -- (v'') node[int] {$\beta\dag$};
}
\end{tikzpicture}
\caption{A local cut at $v$ relative to $\alpha,\beta$. The internal triangle $\triangle$ in $T$ becomes a quasi-triangle $\triangle^\dd$ in $T^\dd$.}\label{fig cut def}
\end{figure}
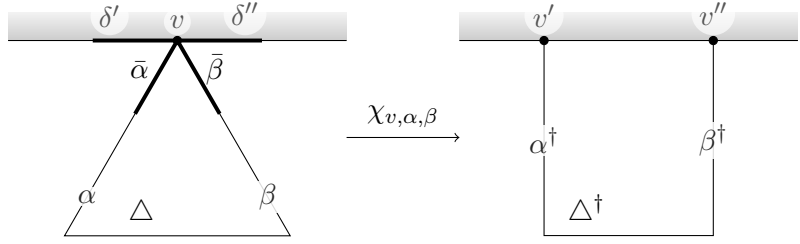

Let $\bar\za$ and $\bar \zb$ be ends of two sides $\za,\zb$ of $\triangle$ such that $\bar\za,\bar\zb$
form an angle of $\triangle$ at $v$. If $\bar\gamma $ is an end of an arc $\zg\in T$ such that $\bar\zg$
is incident to $v$, let $\bar\zg'$ be a curve in the interior of $S$ homotopic to
\[\Bigg\{\begin{array}{ll} 
\text{the concatenation of $\bar\zg$ and $\zd'$, if $\bar\za$ lies between $\bar \zg$ and $\bar\zb$, or $\bar \zg=\bar\za$;}\\ 
\text{the concatenation of $\bar\zg$ and $\zd''$, if $\bar\zb$ lies between $\bar \zg $ and $\bar\za$, or $\bar \zg=\bar\zb$.}\\ 
\end{array}\]
Then let $\chi_{v,\za,\zb}(\zg)$ be the arc obtained from $\zg$ by
replacing the end $\bar\zg$ by $\bar\zg'$. If both ends of $\zg$ are incident to $v$ then
$\chi_{v,\za,\zb}(\zg)$ is obtained from $\zg$ by replacing both ends with the appropriate new end; 
this is the case in the example in Figure \ref{fig cut}. 
If $\gamma\ne\alpha,\beta$ then, abusing notation, we will denote the arc $\chi_{v,\za,\zb}(\zg)$ again by $\gamma$. The arcs obtained from $\za$ and $\zb$ will be denoted by $\za^\dd=\chi_{v,\za,\zb}(\za)$ and $\zb^\dd=\chi_{v,\za,\zb}(\zb)$, see Figure \ref{fig cut def}.
 Define 
\[\chi_{v,\za,\zb}(T)=\left(T\setminus \{\zg\in T\mid \zg \textup{ incident to $v$}\}\right)
\cup \{\chi_{v,\za,\zb}(\zg)\mid \zg \textup{ incident to $v$}\} .\]

Finally, let $\chi_{v,\za,\zb}(S,M,T)=(S,\chi_{v,\zD}(M),\chi_{v,\za,\zb}(T))$. 
Let us point out that
$(S,\chi_{v,\triangle}(M))$ is a surface which has exactly one marked point more than $(S,M)$, and that
$\chi_{v,\za,\zb}(T)$ has the same number of arcs as $T$. Therefore $\chi_{v,\za,\zb}(T)$ is a
partial triangulation of the surface $(S,\chi_{v,\triangle}(M))$.   
We denote by $\triangle^\dd$ the quadrilateral with sides $\za^\dd,\zb^\dd,\zg$ and the new boundary segment between $v'$ and $v''$.

\begin{dfn} \label{def cut}\ 
\begin{enumerate} 
\item The partially triangulated surface $\chi_{v,\za,\zb}(S,M,T)$ is called the \emph{local cut} of
$(S,M,T)$ at $v$ relative to $\za,\zb$.
\item A \emph{cut} of the triangulated surface $(S,M,T)$ is a partially triangulated surface
$(S,M^\dagger,T^\dagger)$ obtained by applying a sequence of local cuts $\chi_{v_1,\za_1,\zb_1},
\ldots, \chi_{v_t,\za_t,\zb_t}$ to $(S,M,T)$, subject to the condition that the triangle $\triangle_i$ in
the $i$-th step is still an internal triangle after $i-1$ steps. 
\end{enumerate} 
\end{dfn} 
Thus  we are allowed to cut each internal triangle of $T$ at most once.
The quadrilaterals $\triangle^\dd_i$ in $T^\dd$ corresponding to $\triangle_i$ in $T$ are called \emph{quasi-triangles}. Note that a quasi-triangle is a quadrilateral that has exactly one side on the boundary.
  
\begin{dfn}
A cut of $(S,M,T)$ is called an \emph{admissible cut} if every internal triangle of $T$ is cut exactly once.
\end{dfn}
\begin{figure}
\centering
\begin{tikzpicture}[scale=.33]
{[]
	\draw (0,0) circle [radius=5cm] ;
	\filldraw[fill=black!20] (0,0) circle [radius=1.5cm] ;
	
	\foreach \x [count=\n] in {0,1,2,3}{
		\node[name=i\n] at (90*\x:1.5cm) {};
		\node[name=o\n] at (90*\x:5cm) {};
	}
	\node[solid] at (90:1.5cm) {};
	\node[solid,label=90:$v$] at (90:5cm) {};
	\node[solid] at (-90:5cm) {};
	
	\draw (o2.center) .. controls +(-135:2) and +(90:2)  .. ($(o3)!.25!(i3)$) node[int] {$\gamma$}
		 ($(o3)!.25!(i3)$) .. controls +(-90:2) and +(180:2) .. ($(o4)!.25!(i4)$)
		($(o4)!.25!(i4)$) .. controls +(0:2) and +(-90:2)  .. ($(o1)!.25!(i1)$)
		($(o1)!.25!(i1)$) .. controls +(90:2) and +(-45:2)  .. (o2.center)  
		(o2.center) -- (i2.center) node[int] {$\alpha$} 
		(i2.center) .. controls +(130:1) and +(90:1.7) ..  ($(o3)!.80!(i3)$) 
		($(o3)!.80!(i3)$) .. controls +(0,-1) and +(-1.7,0) ..  ($(o4)!.65!(i4)$) node[pos=0,left] {$\triangle$}
		($(o4)!.65!(i4)$) .. controls +(1.7,0) and +(0,-1) .. ($(o1)!.50!(i1)$)  
		($(o1)!.50!(i1)$) .. controls +(0,1) and +(-45:1em) ..  (o2.center) node[int,pos=.2] {$\beta$}; 
	\node[below] at (o4) {$(S,M,T)$};
}
{[yshift=-12cm,xshift=-6cm]
	\draw (0,0) circle [radius=5cm] ;
	\filldraw[fill=black!20] (0,0) circle [radius=1.5cm] ;
	
	\foreach \x [count=\n] in {0,1,2,3}{
		\node[name=i\n] at (90*\x:1.5cm) {};
		\node[name=o\n] at (90*\x:5cm) {};
	}
	\node[name=leftv,solid,label=125:$v'$] at (125:5cm) {};
	\node[solid] at (90:1.5cm) {};
	\node[name=rightv,solid,label=90:$v^{''}$] at (65:5cm) {};
	\node[solid] at (-90:5cm) {};
	
	\draw (leftv.center) .. controls +(-45:1) and +(90:2)  .. ($(o3)!.25!(i3)$) node[int] {$\gamma\dag$}
		 ($(o3)!.25!(i3)$) .. controls +(-90:2) and +(180:2) .. ($(o4)!.25!(i4)$)
		($(o4)!.25!(i4)$) .. controls +(0:2) and +(-90:2)  .. ($(o1)!.25!(i1)$)
		($(o1)!.25!(i1)$) .. controls +(90:2) and +(-45:2)  .. (rightv.center)  
		(rightv) -- (i2.center) node[int] {$\alpha\dag$} 
		(i2.center) .. controls +(130:1) and +(90:1.7) ..  ($(o3)!.80!(i3)$)  node[pos=.5,below left=.5em] {$\triangle\dag$}
		($(o3)!.80!(i3)$) .. controls +(0,-1) and +(-1.7,0) ..  ($(o4)!.65!(i4)$)
		($(o4)!.65!(i4)$) .. controls +(1.7,0) and +(0,-1) .. ($(o1)!.50!(i1)$)  
		($(o1)!.50!(i1)$) .. controls +(0,1) and +(-45:1em) ..  (rightv.center) node[int,pos=.2] {$\beta$}; 
	\node[below] at (o4) {$\cut v\gamma\alpha$};
}
{[yshift=-12cm,xshift=6cm]
	\draw (0,0) circle [radius=5cm] ;
	\filldraw[fill=black!20] (0,0) circle [radius=1.5cm] ;
	
	\foreach \x [count=\n] in {0,1,2,3}{
		\node[name=i\n] at (90*\x:1.5cm) {};
		\node[name=o\n] at (90*\x:5cm) {};
	}
	\node[name=rightv,solid,label=55:$v^{''}$] at (55:5cm) {};
	\node[solid] at (90:1.5cm) {};
	\node[name=leftv,solid,label=90:$v'$] at (115:5cm) {};
	\node[solid] at (-90:5cm) {};
	
	\draw (leftv.center) .. controls +(-135:2) and +(90:2)  .. ($(o3)!.25!(i3)$) 
		 ($(o3)!.25!(i3)$) .. controls +(-90:2) and +(180:2) .. ($(o4)!.25!(i4)$)
		($(o4)!.25!(i4)$) .. controls +(0:2) and +(-90:2)  .. ($(o1)!.25!(i1)$) node[int] {$\gamma\dag$}
		($(o1)!.25!(i1)$) .. controls +(90:2) and +(-90:2)  .. (rightv.center)  
		(leftv.center) -- (i2.center) node[int] {$\alpha$} 
		(i2.center) .. controls +(130:1) and +(90:1.7) ..  ($(o3)!.80!(i3)$) node[pos=.5,below left=.5em] {$\triangle\dag$}
		($(o3)!.80!(i3)$) .. controls +(0,-1) and +(-1.7,0) ..  ($(o4)!.65!(i4)$) 
		($(o4)!.65!(i4)$) .. controls +(1.7,0) and +(0,-1) .. ($(o1)!.50!(i1)$)  
		($(o1)!.50!(i1)$) .. controls +(0,1) and +(-45:1em) ..  (leftv.center) node[int,pos=.2] {$\beta\dag$}; 
	\node[below] at (o4) {$\cut v\beta\gamma$};
}
\end{tikzpicture}
\caption{All of the possible cuts at vertex $v$}\label{fig cut}
\end{figure}
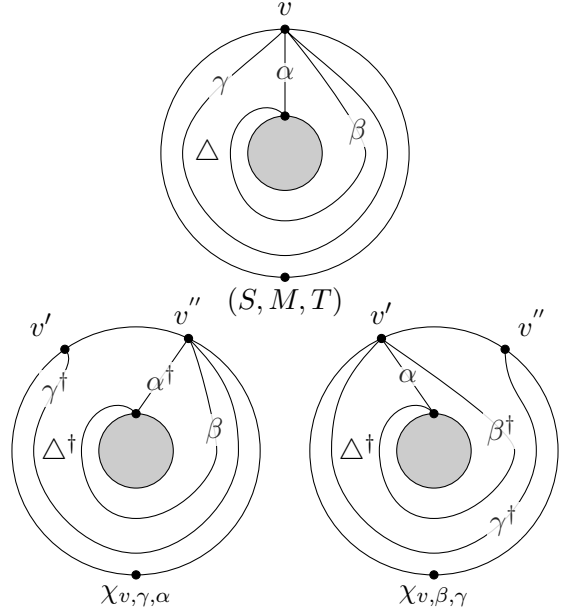

\subsection{Definition of surface algebras}
Let $(S,M^\dd,T^\dd)$ be a cut of $(S,M,T)$ given by the sequence 
$(\cut{v_i}{\alpha_i}{\beta_i})_{i=1,2\dots,t}$.  Note that each of the pairs $(\alpha_i,\beta_i)$ 
corresponds to a pair of vertices $(\alpha_i,\beta_i)$ in the quiver $Q_T$ and each triangle $\triangle_i$ to an arrow 
$\alpha_i\to \beta_i$ or $\beta_i\to \alpha_i$.

The collection $T^\dd$ is a partial triangulation of $(S,M^\dd)$; at each local cut
$\cut{v_i}{\alpha_i}{\beta_i}$ the arcs $\alpha_i^\dd$, $\beta_i^\dd$, $\gamma_i$ together
with the boundary segment between $v_i'$ and $v_i''$ form a quasi-triangle $\triangle_i^\dd$
in $T^\dd$. Choose a diagonal $\e_i$ for each of these quasi-triangles, as in
Figure~\ref{fig ei}, and let $\overline{T}^\dd= T^\dd \cup \{\e_1,\e_2,\dots, \e_t\}$. Then $\overline{T}^\dd
$ is a triangulation of $(S,M^\dd)$. Let $Q_{\overline{T}^\dd}$ be the associated quiver.
Note that each quasi-triangle $\triangle_i^\dd$ gives rise to a subquiver with four vertices
$\alpha_i^\dd$, $\beta_i^\dd$, $\gamma_i$, and $\e_i$, consisting of a 3-cycle with
vertices $\e_i$, $\gamma_i$ and either $\alpha_i^\dd$ or $\beta_i^\dd$, and one arrow
connecting the fourth vertex (either $\beta_i^\dd$ or $\alpha_i^\dd$) to the vertex $\e_i$. We may
suppose without loss of generality that these subquivers have the form
\[\begin{tikzpicture}
\node[name=a] at (0,0) {$\alpha_i^\dd$};
\node[name=e] at (1,0) {$\e_i$};
\node[name=g] at (3,0) {$\gamma_i$};
\node[name=b] at (2,-1) {$\beta_i^\dd$};
\path[->] (a) edge (e)  (e) edge (g) (g) edge (b) (b) edge (e);
\end{tikzpicture}\]
\begin{figure}
\centering
\begin{tikzpicture}[yscale=.5,xscale=.75]
\shade[shading angle=180] (0,4) rectangle (3,4.5);
\draw (0,0) -- (0,4) node[int] {$\alpha_i^\dd$} 
	-- (3,4)  --  (3,0) node[int] {$\beta_i^\dd$} 
	-- (0,0) node[int] {$\gamma_i$}
	-- (3,4) node[int] {$\e_i$};
\node[solid] at (0,0) {};
\node[solid] at (3,0) {};
\node[solid] at (0,4) {};
\node[solid] at (3,4) {};
\end{tikzpicture}
\caption{A choice for $\e_i$ in $\triangle\dag$}\label{fig ei}
\end{figure}

Now we define a new quiver $Q_{T^\dd}$ corresponding to the partial triangulation $T^\dd$
by deleting the vertices $\e_i$ and replacing each of the paths of length two given by
$\alpha_i^\dd \to \e_i\to \gamma_i$ by an arrow $\alpha_i^\dd \to \gamma_i$. Thus each quasi-triangle $\triangle_i^\dd$ in
${T^\dd}$ gives rise to a subquiver of
$Q_{T^\dd}$ of the form
$\alpha_i^\dd \to \gamma_i\to \beta_i^\dd$.

Next, we define relations on the quiver $Q_{T^\dd}$. First note that the quiver 
$Q_{\overline{T}^\dd}$ comes with the potential $W_{\overline{T}^\dd}$ and the Jacobian 
algebra $B_{\overline{T}^\dd}= kQ_{\overline{T}^\dd}/I_{\overline{T}^\dd}$, where $I_{\overline{T}^\dd}$
 is generated by the set $\calr$ consisting of all subpaths of length two of all oriented 
3-cycles. In particular, for each quasi-triangle $\triangle_i^\dd$, we have the three 
relations $\e_i\to \gamma_i\to \beta_i^\dd$, $\gamma_i\to \beta_i^\dd\to \e_i$,
and $\beta_i^\dd\to \e_i\to \delta_i$. Denote by $\calr_i$ the set of these three
relations. Define $I_{T^\dd}$ to be the two-sided ideal generated by all relations in
	\[\left(\calr\setminus \Big( \bigcup_{i=1}^t \calr_i\Big) \right) 
	        \cup \left( \bigcup_{i=1}^t \{\alpha_i^\dd\to \gamma_i\to \beta_i^\dd\}\right).\]
Thus each $\triangle_i^\dd$ corresponds to a subquiver 
$\alpha_i^\dd\to \gamma_i\to \beta_i^\dd$ with a zero relation. 
\begin{dfn}\label{def surface algebra}
A \df{surface algebra of type}  $(S,M)$ is a bound quiver algebra $B_{T^\dd}=kQ_{T^\dd}/I_{T^\dd}$ where
$(S,M^\dd,T^\dd)$ is a cut of a triangulated unpunctured surface $(S,M,T)$.
\end{dfn}

\subsection{Properties of surface algebras}
\begin{lemma}\label{lem admissible cut}
If $(S,M^\dd,T^\dd)$ is obtained from $(S,M,T)$ by the sequence 
of local cuts $(\cut{v_i}{\alpha_i}{\beta_i})_{i=1,\dots,t}$, then $Q_{T^\dd}$ is isomorphic to the quiver 
obtained from $Q_T$ by deleting the arrows $\beta_i\to \alpha_i$ for $i=1,2,\dots,t$.
\end{lemma}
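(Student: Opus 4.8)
The plan is to track, arc by arc, what the operation $\chi_{v,\alpha,\beta}$ does to the local configuration of triangles around the vertex $v$, and to compare the resulting quiver $Q_{T^\dd}$ with $Q_T$. First I would reduce to the case of a single local cut $\chi_{v,\alpha,\beta}$, since the general statement then follows by an obvious induction on $t$: the arcs not incident to $v_i$ are left untouched (up to the notational identification already set up in the paper), and the hypothesis in Definition~\ref{def cut} guarantees that each triangle being cut is still internal, so the arrows $\beta_i\to\alpha_i$ in question are genuinely present when we reach step $i$.

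For a single cut at $v$ relative to the internal triangle $\triangle$ with sides $\alpha,\beta,\gamma$, I would argue as follows. The arcs of $T$ incident to $v$ are split by the angle $\bar\alpha,\bar\beta$ into two ``fans'': those on the $\alpha$-side (including $\alpha$ itself) get their $v$-end pushed to $v'$ along $\delta'$, and those on the $\beta$-side (including $\beta$) get pushed to $v''$ along $\delta''$. Now examine which triangles of $T$ contribute arrows in $Q_T$ that could change: an arrow $\sigma\to\tau$ of $Q_T$ comes from a triangle of $T$ having $\sigma,\tau$ as two consecutive sides. If that triangle does not have $v$ as a vertex, it is unaffected. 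If it does have $v$ as a vertex but the relevant angle at $v$ is \emph{not} the $\bar\alpha,\bar\beta$ angle, then both ends involved get pushed in the same direction (both to $v'$ or both to $v''$), so the triangle survives intact in $T^\dd$ and the arrow persists. The only triangle whose angle at $v$ is exactly $\bar\alpha,\bar\beta$ is $\triangle$ itself; after the cut, $\alpha$ goes to $v'$ and $\beta$ goes to $v''$, so the angle is destroyed and replaced by the new boundary segment, turning $\triangle$ into the quasi-triangle $\triangle^\dd$. Correspondingly, the two arrows of the $3$-cycle of $Q_T$ incident to the arrow $\beta\to\alpha$ — namely $\alpha\to\gamma$ and $\gamma\to\beta$ — survive (they come from the angles of $\triangle$ at the other two vertices, which are untouched), while the arrow $\beta\to\alpha$ itself has no triangle producing it any more. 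Matching this against the explicit construction of $Q_{T^\dd}$ in the paper — where $\triangle^\dd$ gives the subquiver $\alpha^\dd\to\gamma\to\beta^\dd$, i.e.\ exactly the $3$-cycle with the arrow $\beta\to\alpha$ removed — gives the isomorphism, identifying $\alpha^\dd,\beta^\dd$ with $\alpha,\beta$ and all other arcs with themselves.

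The one point that needs genuine care — and which I expect to be the main obstacle — is to verify that \emph{no spurious new arrows are created} and that the concatenations defining $\chi_{v,\alpha,\beta}(\gamma)$ really do not produce crossings or change which pairs of arcs bound a common triangle, other than as described. Concretely: when several arcs share the end at $v$ on, say, the $\beta$-side and all get pushed to $v''$ along $\delta''$, one must check that their cyclic order at $v''$ agrees with their order at $v$, so that the triangles of $T$ between consecutive such arcs are reproduced verbatim in $T^\dd$ and no triangle is split or merged. This is where the orientation of the surface and the ``short enough'' choice of $\delta',\delta''$ (so that $v',v''\notin M$ and no other marked point lies on them) are used, and it is essentially a local picture in a disc neighbourhood of $v$; I would spell it out by drawing the fan of arcs at $v$, noting that pushing a whole fan rigidly along a short boundary arc is an isotopy of that local disc, hence preserves all incidences within the fan, and only severs the single angle straddling the two fans.
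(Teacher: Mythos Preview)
Your proposal is correct and follows the same approach as the paper, namely tracing through the construction of $Q_{T^\dd}$; the paper's own proof is simply the one line ``This follows immediately from the construction,'' so you have written out in full detail what the authors leave implicit. Your care about preservation of cyclic order and absence of spurious new triangles is more than the paper demands, but it is exactly the content hidden behind ``immediately.''
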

\begin{proof}
This follows immediately from the construction.
\end{proof}

\begin{thm}\label{thm adm cut}
If $(S,M^\dd,T^\dd)$ is an admissible cut of $(S,M,T)$ then
\begin{enuma}
\item  $Q_{T^\dd}$ is an admissible cut of $Q_T$;
\item  $B_{T^\dd}$ is of global dimension at most two,  and $B_{T^\dd}$ is of global dimension  
	 one if and only if $B_{T^\dd}$is a hereditary algebra of type $\AA$ or $\tilde \AA$;
\item The tensor algebra of $B_{T^\dd}$ with respect to the $B_{T^\dd}$-bimodule 
		\[\Ext^2_{B_{T^\dd}}(DB_{T^\dd},B_{T^\dd})\]
	 is isomorphic to the Jacobian algebra $B_T$. 
\end{enuma}
\end{thm}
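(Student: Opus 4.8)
The plan is to treat the three parts in order, using the geometric dictionary between $(S,M,T)$ and $(S,M^\dd,T^\dd)$ together with standard facts about gentle algebras and admissible cuts. For part (a), I would argue as follows. By Lemma \ref{lem admissible cut}, $Q_{T^\dd}$ is obtained from $Q_T$ by deleting exactly the arrows $\beta_i\to\alpha_i$, one for each internal triangle $\triangle_i$, and by hypothesis of an admissible cut every internal triangle is cut exactly once, so exactly one arrow is deleted from each oriented $3$-cycle of $Q_T$. Since the interior triangles of $T$ are in bijection with the oriented $3$-cycles of $Q_T$, and these $3$-cycles are precisely the chordless oriented cycles of $Q_T$ (here one must check that $Q_T$ has no other chordless oriented cycles: for surfaces without punctures the only relations are the length-two subpaths of $3$-cycles, and any longer oriented cycle would, going around, have to close up through a triangle, forcing a chord; alternatively this is already implicit in the cluster-tilted/Jacobian description), the deleted arrows form an admissible cut of $Q_T$ in the sense of the definition in Section 2. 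Hence $B_{T^\dd}=kQ_{T^\dd}/I_{T^\dd}$ is obtained from $B_T$ by an admissible cut, and by Proposition \ref{prop 2.8} it is again gentle.

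For part (b), I would compute projective resolutions of the simple $B_{T^\dd}$-modules directly from the gentle structure. In a gentle algebra the minimal projective resolution of a simple $S_i$ is governed by the (at most two) arrows out of $i$ and the relations they lie in: each arrow $\gamma\colon i\to j$ contributes $P_j$ in degree $1$, and one continues by following, for each such arrow, the unique arrow $\delta$ with $\gamma\delta\notin I$ or, if $\gamma\delta\in I$ for the relevant $\delta$, one stops. The key point is that every relation in $I_{T^\dd}$ is of the form $\alpha_i^\dd\to\gamma_i\to\beta_i^\dd$, i.e. the relations are exactly the images of the cut $3$-cycles; in $Q_{T^\dd}$ no two relations are composable (this is where ``admissible cut'' is used again — at most one arrow is removed per $3$-cycle, so the remaining length-two relations are isolated), hence every projective resolution terminates after at most two steps, giving $\gdim B_{T^\dd}\le 2$. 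For the characterization of global dimension one: $\gdim B_{T^\dd}=1$ iff $I_{T^\dd}=0$, iff $Q_{T^\dd}$ has no relations, iff $T^\dd$ has no quasi-triangles, iff $T$ had no internal triangles at all; by the combinatorics of Section \ref{sect surfaces} a triangulation with no internal triangle forces $(S,M)$ to be a disc or an annulus, and then $B_{T^\dd}=B_T$ is the cluster-tilted algebra of type $\AA$ or $\tilde\AA$ with the relations removed, which is hereditary of type $\AA_n$ (disc) or $\tilde\AA$ (annulus).

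For part (c), the strategy is to identify the bimodule $E:=\Ext^2_{B_{T^\dd}}(DB_{T^\dd},B_{T^\dd})$ explicitly and then recognize the tensor algebra $T_{B_{T^\dd}}(E)$ as $kQ_T/I_T$. Using the two-step projective resolutions from (b), $\Ext^2_{B_{T^\dd}}(DB_{T^\dd},B_{T^\dd})$ is a one-dimensional space for each relation $\alpha_i^\dd\to\gamma_i\to\beta_i^\dd$, so as a vector space $E$ has a basis indexed by the cut internal triangles $\triangle_i$, and the $B_{T^\dd}$-bimodule structure places this basis element ``from $\beta_i^\dd$ to $\alpha_i^\dd$'' (dual to the relation path $\alpha_i^\dd\to\gamma_i\to\beta_i^\dd$). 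Therefore the tensor algebra $T_{B_{T^\dd}}(E)$ has quiver $Q_{T^\dd}$ together with one new arrow $\beta_i\to\alpha_i$ for each $i$ — which is exactly $Q_T$ by Lemma \ref{lem admissible cut} — and its relations are the relations of $B_{T^\dd}$ (the paths $\alpha_i^\dd\to\gamma_i\to\beta_i^\dd$) deformed by the comultiplication/multiplication on $E$, which forces precisely the cyclic derivative relations of the potential $W_T=\sum_i(3\text{-cycle})_i$, i.e. $I_T$. One compares this with the relations coming from the $3$-cycles $\beta_i\to\alpha_i\to\gamma_i\to\beta_i$ in $Q_T$ and checks they agree; the identification $T_{B_{T^\dd}}(E)\cong B_T=\mathrm{Jac}(Q_T,W_T)$ follows. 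This last paragraph is the main obstacle: making the bimodule structure on $\Ext^2$ genuinely explicit (including the correct ``multiplication'' maps $E\otimes_{B_{T^\dd}}E\to\cdots$, which must vanish for degree reasons since two relations are never composable after an admissible cut) and matching it term-by-term with the Jacobian relations of $W_T$ requires care; this is essentially the surface-algebra analogue of the Assem–Br\"ustle–Schiffler theorem \cite{ABS} on cluster-tilted algebras as trivial/relation extensions, and I would model the computation on that argument.
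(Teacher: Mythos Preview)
Your arguments for (a) and (b) are essentially the paper's: Lemma~\ref{lem admissible cut} plus the identification of chordless cycles with internal triangles for (a), and the fact that the monomial relations in $I_{T^\dd}$ are pairwise non-overlapping (each arrow lies in at most one triangle, hence in at most one relation) for $\gdim\le 2$, followed by the disc/annulus characterisation of triangulations without internal triangles. Your justification for non-overlap (``at most one arrow is removed per $3$-cycle'') is not quite the right mechanism, but the conclusion is correct.

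For (c) there is a genuine gap. You assert that $E\otimes_{B_{T^\dd}}E=0$ ``since two relations are never composable after an admissible cut''; this is false in general. Take two internal triangles $\triangle_1,\triangle_2$ sharing an arc $\tau$, and cut so that the removed arrows are $\tau\to a$ (from $\triangle_1$) and $d\to\tau$ (from $\triangle_2$). The two remaining relations $a\to b\to\tau$ and $\tau\to c\to d$ do not overlap (so $\gdim\le 2$ is fine), but the corresponding new arrows $e_1\colon\tau\to a$ and $e_2\colon d\to\tau$ compose through the idempotent at $\tau$, and in $B_T$ the path $d\to\tau\to a$ is nonzero (the gentle relations at $\tau$ kill the \emph{other} two compositions). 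Under the natural grading this path sits in degree two, hence in $E\otimes_{B_{T^\dd}}E$, which is therefore nonzero. More generally, chains of adjacent internal triangles give arbitrarily high nonvanishing tensor powers. So your plan to reduce the tensor algebra to the trivial extension $B_{T^\dd}\ltimes E$ and then ``model on \cite{ABS}'' will not work outside the disc/annulus case.

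The paper sidesteps exactly this difficulty: it uses \cite{ABS} only to identify the \emph{quiver} of the tensor algebra (one new arrow $\beta_i\to\alpha_i$ per relation), and then invokes \cite[Theorem~6.12]{Keller-deformed} to conclude that the tensor algebra is the Jacobian algebra of the potential $\sum_i(\alpha_i\to\gamma_i\to\beta_i\to\alpha_i)$, which is $W_T$. Keller's theorem is precisely what packages the higher tensor powers and produces the Jacobian relations in one stroke; a direct verification along your lines would have to track all of $E^{\otimes n}$ and is substantially more work than you suggest.
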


\begin{proof}
Part (a).  The oriented 3-cycles in $Q_T$ are precisely the chordless cycles in $Q_T$ and,
by Lemma~\ref{lem admissible cut}, $Q_{T\dag}$ is obtained from $Q_T$ by
deleting exactly one arrow in each chordless cycles; this shows (a).

Part (b). Since $Q_{T^\dd}$ does not contain any oriented cycles, the ideal $I_{T^\dd}$ is generated by
monomial relations which do not overlap. This immediately implies $\gdim B_{T^\dd}\leq 2$.

If $B_{T^\dd}$ is of global dimension at most one, then the ideal $I_{T^\dd}$ is trivial and
hence the ideal $I_T$ is trivial too. It follows that $Q_T$ has no oriented cycles, $T^\dd=T$,
$B_{T^\dd}=B_T$, and that $T$ is a triangulation  without internal triangles. The only unpunctured surfaces
that admit such a triangulation are the disc and the annulus, corresponding to the case where
$B_{T^\dd}$ is hereditary algebra of type $\AA$ or $\tilde \AA$, respectively.

Part (c). Let $\widetilde{B_{T^\dd}}$ denote the tensor algebra. It follows from \cite{ABS} that its
quiver is obtained from $Q_{T^\dd}$ by adding on arrow $\beta_i^\dd\to \alpha_i^\dd$ for each
relation $\alpha_i^\dd \to \gamma_i\to \beta_i^\dd$; thus the quiver of $\widetilde{B_{T^\dd}}$ is
isomorphic to the quiver $Q_T$. Moreover, it follows from \cite[Theorem 6.12]{Keller-deformed}  
that $\widetilde{B_{T^\dd}}$ is a Jacobian algebra with potential $\tilde W$ given by the
sum of all 3-cycles $\alpha_i^\dd\to \gamma_i\to \beta_i^\dd \to \alpha_i^\dd$; thus
$\widetilde{B_{T^\dd}}\cong B_T$.
\end{proof}

\begin{cor}\label{cor 3.6}
The admissible cut surface algebras of the disc with $n+3$ marked points are precisely the 
iterated tilted algebras of type $\AA_n$ whose global dimension is at most two.
\end{cor}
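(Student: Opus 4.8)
The plan is to derive the corollary by combining Theorem \ref{thm adm cut} with the characterization of iterated tilted algebras of type $\AA_n$ of global dimension at most two due to Buan--Ringel--et al.\ in \cite{BFPPT}, which the introduction has already quoted: an algebra is iterated tilted of Dynkin type $\AA_n$ of global dimension at most two if and only if it is the quotient of a cluster-tilted algebra of type $\AA_n$ by an admissible cut. So the entire content of the corollary is the identification of the two families of bound quiver algebras: the admissible-cut surface algebras $B_{T^\dd}$ of the disc with $n+3$ marked points on one side, and the admissible cuts of cluster-tilted algebras of type $\AA_n$ on the other.

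First I would recall that when $S$ is a disc with $n+3$ marked points, the Jacobian algebra $B_T$ attached to any triangulation $T$ is exactly a cluster-tilted algebra of type $\AA_n$; this is the result of \cite{CCS,BMR} cited in the introduction, and it is the case $S$ a disc of the general fact that $B_T=\End_{\calc_{(S,M)}}T$. Conversely, every cluster-tilted algebra of type $\AA_n$ arises as such a $B_T$: the cluster-tilting objects in the cluster category of type $\AA_n$ are precisely the triangulations of the $(n+3)$-gon. Next, by Theorem \ref{thm adm cut}(a), if $(S,M^\dd,T^\dd)$ is an admissible cut of $(S,M,T)$ then $Q_{T^\dd}$ is an admissible cut of $Q_T$ in the quiver sense, and by construction $I_{T^\dd}=\langle I_T\cup\zG\rangle$ where $\zG$ is the set of deleted arrows $\{\beta_i\to\alpha_i\}$ — this is immediate from Lemma \ref{lem admissible cut} and the definition of $I_{T^\dd}$, since cutting an internal triangle replaces the two surviving relations of its $3$-cycle by the single relation $\alpha_i^\dd\to\gamma_i\to\beta_i^\dd$, which is exactly the relation of $B_T$ on that path with the third arrow removed. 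Hence $B_{T^\dd}$ is literally a quotient of the cluster-tilted algebra $B_T$ by an admissible cut in the sense of \cite{BFPPT}. Applying \cite{BFPPT}, every $B_{T^\dd}$ is iterated tilted of type $\AA_n$ of global dimension $\le 2$ (the global dimension bound also being reproved directly in Theorem \ref{thm adm cut}(b)).

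For the reverse inclusion I would start from an iterated tilted algebra $A$ of type $\AA_n$ with $\gdim A\le 2$; by \cite{BFPPT} it is $kQ/\langle I\cup\zG\rangle$ for some cluster-tilted algebra $kQ/I$ of type $\AA_n$ and some admissible cut $\zG$ of $Q$. Realize $kQ/I=B_T$ for a triangulation $T$ of the $(n+3)$-gon. The admissible cut $\zG$ picks one arrow out of each oriented $3$-cycle of $Q_T$, i.e.\ out of each internal triangle of $T$; I would check that performing the local cut $\cut{v_i}{\alpha_i}{\beta_i}$ on the triangle $\triangle_i$ so that the deleted arrow $\beta_i\to\alpha_i$ of Lemma \ref{lem admissible cut} is exactly the chosen arrow of $\zG$ in that $3$-cycle is always possible (each arrow of a $3$-cycle of $Q_T$ corresponds to an ordered pair of sides of the internal triangle sharing a vertex, so there is a unique local cut realizing a prescribed deleted arrow), and that these local cuts can be performed in succession — the disc has no obstruction of the ``triangle is no longer internal'' type beyond the constraint that each triangle is cut once, which is exactly the admissible-cut hypothesis. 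The resulting admissible cut $(S,M^\dd,T^\dd)$ then has $Q_{T^\dd}=Q$ with $\zG$ deleted, and $I_{T^\dd}=\langle I\cup\zG\rangle$, so $A\cong B_{T^\dd}$.

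The step I expect to be the only real thing to verify carefully is the dictionary between quiver-theoretic admissible cuts and the geometric local cuts in the disc: namely that an arbitrary admissible cut $\zG$ of $Q_T$ is realized by some admissible cut of the triangulated surface $(S,M,T)$ — equivalently, that the map sending a surface-admissible-cut to the set of deleted arrows is a bijection onto the set of quiver-admissible-cuts. One direction is Theorem \ref{thm adm cut}(a); the other is the surjectivity just sketched, which amounts to the elementary observation that the three arrows of the $3$-cycle of an internal triangle $\triangle_i$ correspond bijectively to the three vertices of $\triangle_i$ (via "the arrow not incident, in the quiver sense, to that vertex''), together with the fact that local cuts at distinct triangles do not interfere. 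Everything else is a direct appeal to \cite{BFPPT}, \cite{CCS,BMR}, and Theorem \ref{thm adm cut}.
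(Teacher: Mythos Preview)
Your proof is correct and follows essentially the same approach as the paper: combine \cite{BFPPT} with the identification of cluster-tilted algebras of type $\AA_n$ as the algebras $B_T$ for triangulations of the $(n+3)$-gon \cite{CCS}, and use Theorem~\ref{thm adm cut}. The paper's own proof is simply these three citations in two sentences; your version is more explicit about the point the paper leaves tacit, namely that the correspondence between surface-theoretic admissible cuts of $(S,M,T)$ and quiver-theoretic admissible cuts of $Q_T$ is a bijection (Theorem~\ref{thm adm cut}(a) gives one direction, and you correctly supply the elementary converse via the bijection between arrows of a $3$-cycle and vertices of the corresponding internal triangle).
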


\begin{proof}
In \cite{BFPPT}, the authors have shown that the quotients by an admissible cut of a cluster-tilted
algebra of type $\AA_n$ are precisely the iterated tilted algebras of type $\AA_n$  of global 
dimension at most two.  By \cite{CCS}, the cluster-tilted algebras of type $\AA_n$ are precisely the 
algebras $B_T$, where $T$ is a triangulation of the disc with $n+3$ marked points.  The result now 
follows from Theorem~\ref{thm adm cut}.
\end{proof}

\begin{thm}
Every surface algebra is isomorphic to the endomorphism algebra of a partial cluster tilting 
object in a generalized cluster category.  More precisely, if the surface algebra $B_{T^\dd} $ is given by the 
cut $(S,M^\dd,T^\dd)$ of the triangulated surface $(S,M,T)$, then  \[B_{T^\dd}\cong \End_{\calc{(S,M^\dd)}}T^\dd,\]
where $T^\dd$ denotes the object in the generalized cluster category $\calc{(S,M^\dd)}$ corresponding to $T^\dd$. 
\end{thm}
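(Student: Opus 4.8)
The plan is to exploit the fact, established in Theorem~\ref{thm adm cut}(c) and its proof (via \cite{ABS,Keller-deformed}), that $B_{\overline{T}^\dd}$ is the Jacobian algebra of the triangulated surface $(S,M^\dd,\overline{T}^\dd)$, together with the known identification $B_{\overline{T}^\dd}\cong\End_{\calc_{(S,M^\dd)}}\overline{T}^\dd$ from \cite{Amiot,BZ}. Here $\overline{T}^\dd=T^\dd\cup\{\e_1,\dots,\e_t\}$ is the completion of the partial triangulation $T^\dd$ to a genuine triangulation by choosing the diagonals $\e_i$ of the quasi-triangles, as in Figure~\ref{fig ei}. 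The object $T^\dd$ in $\calc_{(S,M^\dd)}$ is then the direct summand of $\overline{T}^\dd$ obtained by omitting the summands $\e_i$, so it is automatically a partial cluster-tilting object (a direct summand of a cluster-tilting object), and what must be shown is that the $\e_i$-summands contribute nothing to $\End T^\dd$ beyond what is already visible in $Q_{T^\dd}$, $I_{T^\dd}$.

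The key steps, in order, are as follows. First, recall (or cite from \cite{Amiot,BZ,CCS}) that $\End_{\calc_{(S,M^\dd)}}\overline{T}^\dd\cong B_{\overline{T}^\dd}=kQ_{\overline{T}^\dd}/I_{\overline{T}^\dd}$, so that morphisms between summands indexed by arcs $\mu,\nu\in\overline{T}^\dd$ are computed by paths in $Q_{\overline{T}^\dd}$ modulo $I_{\overline{T}^\dd}$. Second, observe that $\End_{\calc}T^\dd=e\big(\End_{\calc}\overline{T}^\dd\big)e$, where $e$ is the idempotent corresponding to the summand $T^\dd$, i.e.\ the sum of the vertex idempotents at the arcs of $T^\dd$; thus $\End_{\calc}T^\dd$ is the quotient quiver-with-relations obtained from $(Q_{\overline{T}^\dd},I_{\overline{T}^\dd})$ by "contracting through" the deleted vertices $\e_i$. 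Third, carry out this contraction explicitly on each quasi-triangle subquiver: using the normal form of Figure~\ref{fig ei}, the only paths through $\e_i$ that survive in $e(\cdot)e$ are the compositions $\alpha_i^\dd\to\e_i\to\gamma_i$, which become a single arrow $\alpha_i^\dd\to\gamma_i$, while the relations $\e_i\to\gamma_i\to\beta_i^\dd$ and $\beta_i^\dd\to\e_i\to\delta_i$ and $\gamma_i\to\beta_i^\dd\to\e_i$ either disappear or combine to force the relation $\alpha_i^\dd\to\gamma_i\to\beta_i^\dd$ in the idempotent-truncated algebra. One must also check that no \emph{new} long paths through several $\e_i$'s appear — but since each $\e_i$ has in-degree and out-degree exactly one in $Q_{\overline{T}^\dd}$ (coming from $\beta_i^\dd$ and $\alpha_i^\dd$ respectively on the "incoming" side and from $\alpha_i^\dd$, resp.\ $\gamma_i$, depending on orientation) and the $\e_i$ lie in distinct quasi-triangles, the contraction is local and produces exactly the quiver $Q_{T^\dd}$ with ideal $I_{T^\dd}$ as defined in Definition~\ref{def surface algebra}. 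Assembling these local computations over all $i=1,\dots,t$ and over the untouched part of the surface gives $eB_{\overline{T}^\dd}e\cong kQ_{T^\dd}/I_{T^\dd}=B_{T^\dd}$, hence $B_{T^\dd}\cong\End_{\calc_{(S,M^\dd)}}T^\dd$.

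The main obstacle I anticipate is the bookkeeping in step three: one must verify that the idempotent truncation $e(kQ_{\overline{T}^\dd}/I_{\overline{T}^\dd})e$ really is presented by exactly the quiver $Q_{T^\dd}$ and exactly the ideal $I_{T^\dd}$ of Definition~\ref{def surface algebra}, with no spurious extra relations and no missing ones. Concretely this means checking, for each of the (up to) three shapes a quasi-triangle can take after a cut, that the four morphism spaces $\Hom(\alpha_i^\dd,\gamma_i)$, $\Hom(\gamma_i,\beta_i^\dd)$, $\Hom(\beta_i^\dd,\e_i)$ and $\Hom(\alpha_i^\dd,\beta_i^\dd)$ in $B_{\overline{T}^\dd}$ restrict correctly — in particular that $\Hom(\alpha_i^\dd,\beta_i^\dd)=0$ in the truncation, which is precisely the new zero relation $\alpha_i^\dd\to\gamma_i\to\beta_i^\dd$. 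This is a finite, local check, but it is the heart of the argument; everything else is a formal consequence of $\End_{\calc}(-\oplus\text{summands removed})=e\End_{\calc}(-)e$ together with the already-known description of $\End_{\calc_{(S,M^\dd)}}\overline{T}^\dd$ as a Jacobian algebra of the surface.
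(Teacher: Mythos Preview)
Your approach is essentially the same as the paper's: both complete $T^\dd$ to a triangulation $\overline{T}^\dd$, identify $\End_{\calc_{(S,M^\dd)}}\overline{T}^\dd\cong B_{\overline{T}^\dd}$, and then compute the endomorphism algebra of the summand $T^\dd$ by a local analysis at each quasi-triangle; the paper phrases this in terms of named morphisms $f_i,g_i,h_i,u_i$ in the cluster category, while you phrase it as the idempotent truncation $eB_{\overline{T}^\dd}e$, but these are literally the same computation. One small correction to your bookkeeping: the vertex $\e_i$ has in-degree~$2$ in $Q_{\overline{T}^\dd}$ (arrows from both $\alpha_i^\dd$ and $\beta_i^\dd$), not~$1$; what makes the contraction local is rather that (i) no arrows outside the quasi-triangle subquiver touch $\e_i$, and (ii) the Jacobian relation $\beta_i^\dd\to\e_i\to\gamma_i$ kills the second incoming path, so only $\alpha_i^\dd\to\e_i\to\gamma_i$ survives in $eB_{\overline{T}^\dd}e$ --- which is exactly the verification the paper carries out.
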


\begin{proof}
Let $\overline{T}^\dd$ be the completion of $T^\dd$ as in the construction of the quiver $Q_{T^\dd}$. 
By \cite{FST}, the triangulation $\overline{T}^\dd$ corresponds to a cluster in the cluster algebra of $(S,M^\dd)$, hence $\overline{T}^\dd$ also corresponds to a cluster-tilting object in the generalized cluster category $\mathcal{C}{(S,M^\dd)}$, see \cite{Amiot, BZ}.
 Thus  $T^\dd$ is a partial 
cluster-tilting object in $\calc{(S,M^\dd)}$. The endomorphism algebra $\End_{\calc{(S,M^\dd)}}\overline{T}^\dd$ of 
the cluster-tilting object $\overline{T}^\dd$ is isomorphic to the Jacobian algebra $B_{\overline{T}^\dd}$.

Let $\cut{v_i}{\alpha_i}{\beta_i}$ be one of the local cuts that defines the cut $(S,M^\dd,T^\dd)$.  
In the quiver $Q_{\overline{T}^\dd}$, we have the corresponding subquiver
\[\begin{tikzpicture}
\node[name=a] at (0,0) {$\alpha_i^\dd$};
\node[name=e] at (1,0) {$\e_i$};
\node[name=g] at (3,0) {$\gamma_i$};
\node[name=b] at (2,-1) {$\beta_i^\dd$};
\path[->] (a) edge (e)  (e) edge (g) (g) edge (b) (b) edge (e);
\end{tikzpicture}\]
and there are no other arrows in $Q_{\overline{T}^\dd}$ starting or ending at $\e_i$. Each vertex in this
subquiver corresponds to an indecomposable summand of the cluster-tilting object $\overline{T}^\dd$, and
each non-zero path corresponds to a non-zero morphism between the indecomposable summands
associated to the endpoints of the path.  Thus in $\calc{(S,M^\dd)}$, there are non-zero morphisms
$f\colon \beta_i^\dd\to \gamma_i$, $g\colon \gamma_i\to \e_i$, $h_i\colon \e_i \to \beta_i$,
$u_i\colon \e_i \to \alpha_i^\dd$, and the compositions $g_if_i$, $h_ig_i$, $f_ih_i$ are zero, but the 
composition $u_ig_i$ is non-zero. 

Removing the summand $\e_i$ from the cluster-tilting object $\overline{T}^\dd$ and considering 
$\End_{\calc{(S,M^\dd)}}(\overline{T}^\dd\setminus \e_i)$, the only non-zero morphisms between the summands
$\alpha_i^\dd,$ $\beta_i^\dd$, and $\gamma_i$ are $f_i$ and $u_ig_i$, and the composition
$(u_ig_i)f_i$ is zero, since $g_if_i$ is zero.  Thus, locally, the quiver of $\End_{\calc{(S,M^\dd)}}(\overline{T}^\dd\setminus \e_i)$ is
$\alpha_i^\dd \to \gamma_i\to \beta_i^\dd$ and the composition of the two arrows is zero in 
$\End_{\calc{(S,M^\dd)}}(\overline{T}^\dd\setminus \e_i)$.

The endomorphism algebra $\End_{\calc{(S,M^\dd)}}(T^\dd)$ of the partial cluster tilting object $T^\dd$ is obtained
by a finite number of such local transformations, each of which is corresponding to a local cut
$\cut{v_i}{\alpha_i}{\beta_i}$. Thus $\End_{\calc{(S,M)}}(T^\dd) = kQ_{T^\dd}/I_{T^\dd} = B_{T^\dd}$. 
\end{proof}

\begin{thm}
Every surface algebra is gentle.
\end{thm}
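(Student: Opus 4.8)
The plan is to verify directly that the bound quiver $(Q_{T^\dd}, I_{T^\dd})$ satisfies the four gentle conditions (G1)--(G4). I would split into two parts: first handle the ``uncut'' part of the construction (which is essentially the known statement of Proposition~\ref{prop 2.7} and \cite{ABCP}), and then check that the local modifications introduced by each cut do not violate gentleness. Since the cuts are performed at distinct internal triangles and each such triangle is cut at most once, the modifications are local and essentially independent, so it suffices to analyze the effect of a single local cut $\cut{v_i}{\alpha_i}{\beta_i}$.

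First I would recall that $B_T$ is gentle by Proposition~\ref{prop 2.7}, and that by Lemma~\ref{lem admissible cut} the quiver $Q_{T^\dd}$ is obtained from $Q_T$ by deleting the arrows $\beta_i \to \alpha_i$. This already shows (G1): deleting arrows can only decrease the number of arrows starting or stopping at each vertex, so the ``at most two'' conditions are preserved. For the ideal, I would use the explicit description from the construction: $I_{T^\dd}$ is generated by the relations in $\calr \setminus \bigcup_i \calr_i$ together with the new length-two relations $\alpha_i^\dd \to \gamma_i \to \beta_i^\dd$. Every generator is a path of length two, which gives (G2).

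The substantive checks are (G3) and (G4). For each arrow $b$ of $Q_{T^\dd}$ I need to control how many arrows $a$ (resp. $c$) satisfy $ab \in I$ (resp. $bc \in I$), and likewise for the ``$\notin I$'' conditions. Here I would argue that near each quasi-triangle $\triangle_i^\dd$ the configuration is, by construction, exactly the local picture $\alpha_i^\dd \to \gamma_i \to \beta_i^\dd$ with the single zero relation on this composition, plus whatever arrows enter or leave this subquiver from the rest of $Q_{T^\dd}$. Since in the original $Q_T$ the triangle $\triangle_i$ contributed a $3$-cycle $\alpha_i \to \gamma_i \to \beta_i \to \alpha_i$ with all three length-two subpaths in $I_T$, and we have now removed the arrow $\beta_i \to \alpha_i$ and replaced the two relations involving it by the single relation on $\alpha_i^\dd \to \gamma_i \to \beta_i^\dd$ (which was already forced in $B_T$), the set of length-two paths that are ``in $I$'' and the set that are ``not in $I$'', restricted to arrows incident to this subquiver, is a subset of what it was for $B_T$. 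Consequently the bounds (G3) and (G4), which held for $B_T$, continue to hold for $B_{T^\dd}$. Away from the cut triangles the bound quiver is literally unchanged, so nothing needs to be rechecked there.

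The main obstacle, I expect, is bookkeeping at a vertex $v_i$ that is shared by two (or more) internal triangles being cut, or where a cut triangle meets a triangle with a boundary side: one must make sure that the interaction between a new relation $\alpha_i^\dd \to \gamma_i \to \beta_i^\dd$ and a neighbouring relation does not create a vertex violating (G3) (two choices of the continuing arrow $a$ with $ab \in I$). This is handled by Definition~\ref{def cut}, which only permits cutting $\triangle_i$ while it is still internal; combined with the fact that in a gentle algebra each arrow sits in at most one relation on each side, one sees the new relations fit into the existing ``relation slots'' freed up by deleting the $\beta_i \to \alpha_i$ arrows, so no vertex acquires an extra relation. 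Making this precise is a finite case analysis over the local configurations of a quasi-triangle and its neighbours, which I would organize by the shape of the arc $\gamma_i$ and whether its other endpoint is also a cut vertex.
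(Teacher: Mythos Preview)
Your approach is correct and follows the same skeleton as the paper: invoke Proposition~\ref{prop 2.7} for $B_T$, use Lemma~\ref{lem admissible cut} to identify $Q_{T^\dd}$ as $Q_T$ with the arrows $\beta_i\to\alpha_i$ deleted, and then check gentleness of the resulting bound quiver. The paper, however, replaces your entire local verification of (G1)--(G4) by a single appeal to Proposition~\ref{prop 2.8}: deleting an arrow from a gentle bound quiver (and with it the relations containing that arrow) always yields a gentle bound quiver.

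The point you are close to but do not quite exploit is that the ``new'' relation $\alpha_i^\dd\to\gamma_i\to\beta_i^\dd$ is not new at all---it was already one of the three length-two relations of the $3$-cycle in $I_T$. Hence passing from $(Q_T,I_T)$ to $(Q_{T^\dd},I_{T^\dd})$ literally consists of removing some arrows and removing precisely those generating relations that involve the removed arrows; nothing is added. With this observation each of (G1)--(G4) is preserved for the trivial reason that the sets being bounded (arrows at a vertex, arrows $a$ with $ab\in I$, arrows $a$ with $ab\notin I$) can only shrink. In particular, your anticipated ``finite case analysis over the local configurations of a quasi-triangle and its neighbours'' and the bookkeeping worry about vertices shared by several cut triangles are unnecessary: no interaction between cuts can create a violation, because no cut introduces a relation or an arrow.
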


\begin{proof}
Let $B_{T^\dd}$ be the surface algebra obtained by a cut $(S,M^\dd,T^\dd)$ of a triangulated surface 
$(S,M,T)$.  The algebra $B_T$ is gentle by  Proposition \ref{prop 2.7}, and the result then follows from Lemma \ref {lem admissible cut} and Proposition \ref{prop 2.8}.
\end{proof}

\section{Computing the AG-invariant of surface algebras}\label{sect 4}
In this section, we give an explicit formula for the AG-invariant of an arbitrary surface
algebra in terms of the surface. The key idea is to interpret the permitted threads as
complete fans and the forbidden threads as triangles or quasi-triangles in the partial
triangulation $T\dag$.

\subsection{Permitted threads and complete fans}
Let $(S,M\dag,T\dag)$ be a cut of a triangulated unpunctured surface $(S,M,T)$, and let $v$ be a 
point in $M\dag$.  Let $v',v''$ be two points on the boundary, but not in $M\dag$, such that
there is a curve $\delta$ on the boundary from $v'$ to $v''$ passing through $v$, but not
passing through any other point of $M\dag$. Let $\gamma$ be a curve from $v'$ to $v''$ which is
homotopic to $\delta$ but disjoint from the boundary except for its endpoints, such that
the number of crossings between $\gamma$ and $T\dag$ is minimal. The sequence of arcs $\tau_1$,
$\tau_2$, \dots $\tau_j$ in $T\dag$ that $\gamma$ crosses in order is called the \df{complete
fan at $v$ in $T\dag$.} The arcs $\tau_2,\dots,\tau_{j-1}$ are called the \df{interior arcs of
the fan} and $j$ is called the \df{width of the fan}. See Figure~\ref{fig fan def}

Note that the sequence is of width zero if no arc in $T\dag$ is incident to $v$.
Fans of width zero are called \df{empty fans}, and fans of width one are called 
\df{trivial fans}. Every arc in $T^\dd$ is contained in exactly two non-empty 
complete fans, namely the two fans at the two endpoints of the arc.
\begin{figure}
\begin{tikzpicture}
\shade[shading=axis,shading angle=180]  (0,0) rectangle (4,.25);
\draw (0,0) -- (4,0);
\node[coordinate,name=m1,label={[fill=white,shape=circle,inner sep=.5,outer sep=1.5]90:$v'$}] at (1,0) {};
\node[solid,name=m2,label={[fill=white,shape=circle,inner sep=.5,outer sep=1.5]90:$v$}] at (2,0) {};
\node[coordinate,name=m3,label={[fill=white,shape=circle,inner sep=.5,outer sep=1.5]90:$v''$}] at (3,0) {};
\draw (m1) edge[bend right] (m3);
\clip (0,.1) rectangle (4,-1.5); 
\foreach \a in {35,55,...,155}
	\draw (m2.center) -- +(-\a:3cm);
\node[fill=white,opacity=.8,inner sep=.8] at ($(m2)+(-35:1.5cm)$) {$\tau_j$};
\node[fill=white,opacity=.8,inner sep=.8] at ($(m2)+(-135:1.5cm)$) {$\tau_2$};
\node[fill=white,opacity=.8,inner sep=.8] at ($(m2)+(-155:1.5cm)$) {$\tau_1$};
\end{tikzpicture}
\caption{The fan at $v$}\label{fig fan def}
\end{figure}
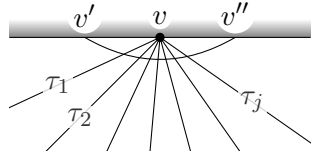

\begin{lemma}\label{lem permitted to v}
The construction of $Q_{T^\dd}$ induces a bijection between the set $\calh$ of permitted threads in 
$Q_{T^\dd}$ and the non-empty complete fans  of $(S,M^\dd,T^\dd)$.  Moreover, under this bijection,
the trivial permitted threads correspond to the trivial fans.
\end{lemma}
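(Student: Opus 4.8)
The plan is to set up the bijection explicitly on both sides and then check it is well-defined and inverse to an obvious inverse construction. A permitted thread in $Q_{T^\dd}$ is a maximal path $a_1 a_2 \cdots a_{j-1}$ with no relation among consecutive arrows; via the identification of vertices of $Q_{T^\dd}$ with arcs of $T^\dd$, such a thread passes through a sequence of arcs $\tau_1, \tau_2, \dots, \tau_j$. First I would recall how arrows of $Q_{T^\dd}$ arise: away from the quasi-triangles, $Q_{T^\dd}$ agrees with $Q_T$, so an arrow $\tau_i \to \tau_{i+1}$ records that $\tau_i, \tau_{i+1}$ bound a common triangle with $\tau_{i+1}$ following $\tau_i$ in the chosen orientation around a shared vertex $v$; at a quasi-triangle $\triangle_k^\dd$ the composite $\alpha_k^\dd \to \e_k \to \gamma_k$ has been replaced by a single arrow $\alpha_k^\dd \to \gamma_k$, which again records that these two arcs share the vertex $v_k'$ and sit in the quasi-triangle. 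Thus every arrow $\tau_i \to \tau_{i+1}$ of $Q_{T^\dd}$ is attached to a well-defined vertex of $M^\dd$, namely the common endpoint of $\tau_i, \tau_{i+1}$ at which $\tau_{i+1}$ follows $\tau_i$ counter-clockwise.

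The key step is to show that in a permitted thread \emph{all} the arrows are attached to the \emph{same} vertex $v$. Here is where the relations enter: the defining relations of $B_{T^\dd}$ are exactly the length-two subpaths of the oriented $3$-cycles of $Q_T$ that survive, together with the new zero relations $\alpha_k^\dd \to \gamma_k \to \beta_k^\dd$ at the quasi-triangles. A length-two path $\tau_{i-1} \to \tau_i \to \tau_{i+1}$ with the first arrow attached to vertex $u$ and the second to vertex $v$: if $u \neq v$, then as one reads the two triangles around $\tau_i$, the arc $\tau_i$ has one triangle on each side, and the "fan turning" at $\tau_i$ continues only if $u = v$; if $u \neq v$ the two triangles lie on opposite sides of $\tau_i$ and the composite is precisely one of the forbidden compositions (either a relation in a $3$-cycle of $Q_T$, or the new quasi-triangle relation). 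Conversely, if $u = v$ the composite is not a relation (condition (G4): the arrow out of $\tau_i$ that continues the fan is the unique one with non-zero composite). So a permitted thread is exactly a maximal sequence of arcs turning around a single vertex $v$ in the counter-clockwise direction — which is precisely the data recorded by the curve $\gamma$ in the definition of the complete fan at $v$: $\gamma$ crosses $\tau_1, \dots, \tau_j$ in order, and consecutive crossings of $\gamma$ correspond to passing from one triangle (or quasi-triangle) incident to $v$ to the next. Maximality of the thread (no arrow $b$ with $bC$ or $Cb$ permitted) matches maximality of the fan (the first arc $\tau_1$ and last arc $\tau_j$ are the "outermost" ones incident to $v$, i.e. the ones whose far sides are boundary segments or continue into a relation). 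This gives a map $\calh \to \{\text{non-empty complete fans}\}$, and the inverse simply reads off the thread $\tau_1 \to \cdots \to \tau_j$ from the fan; the two constructions are visibly mutually inverse.

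Finally, for the "moreover": a trivial permitted thread is, by definition, a constant path $e_v$ at a vertex $\tau$ of $Q_{T^\dd}$ with at most one arrow in and at most one out, and such that if $b,c$ with $s(c) = \tau = t(b)$ exist then $bc \notin I$. Unwinding: $e_\tau$ is a trivial permitted thread exactly when $\tau$ is incident to a vertex $v$ of $M^\dd$ around which $\tau$ is the \emph{only} arc of $T^\dd$, i.e. the fan at $v$ has width one. I would spell this out by the same local analysis at $\tau$ as above: the "at most one arrow in/out and $bc \notin I$" condition says the fan at the relevant endpoint neither extends backward nor forward, which for a single arc means width exactly $1$. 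Conversely a trivial fan at $v$ gives, for its unique arc $\tau$, exactly the constant path $e_\tau$ with the required property. The main obstacle I anticipate is purely bookkeeping: carefully matching the "no relation" condition at an internal arc $\tau_i$ to the geometric statement "the two triangles at $\tau_i$ are on opposite sides, so $\gamma$ cannot cross through both while turning around a single $v$", and handling the quasi-triangles as a special case of this where the role of the missing arrow $\beta_k^\dd \to \alpha_k^\dd$ is played by the new zero relation $\alpha_k^\dd \to \gamma_k \to \beta_k^\dd$. Once the local picture at a single arc is pinned down, globalizing to threads and fans is immediate.
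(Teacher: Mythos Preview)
Your approach to the non-trivial case is more explicit than the paper's, which simply asserts that ``from the construction of $Q_{T^\dd}$ it follows that the non-trivial permitted threads correspond to the complete fans of width at least $2$.'' Your analysis that all arrows in a permitted thread are attached to the same vertex $v\in M^\dd$, with the relation/non-relation dichotomy at each intermediate arc matching the ``same side / opposite side'' dichotomy of the adjacent triangles, is exactly the content hidden behind that sentence, and it is correct.

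There is, however, a genuine gap in your treatment of the trivial case, and it is precisely the point where the paper's proof does real work. You argue that $e_\tau$ is a trivial permitted thread if and only if there \emph{exists} a vertex $v$ at which $\tau$ is the only incident arc, and you then send a trivial fan at $v$ to $e_\tau$. But for this to be a bijection you must show that the map from trivial fans to trivial threads is injective: equivalently, that no arc $\tau\in T^\dd$ has trivial fans at \emph{both} of its endpoints. If that happened, two distinct trivial fans would map to the same trivial thread $e_\tau$. Your proposal does not address this, and your ``visibly mutually inverse'' claim does not survive this possibility.

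The paper closes this gap by invoking the standing hypothesis that $(S,M)$ is not a disc with four marked points: if both fans at the endpoints of $\tau$ were trivial, then each triangle adjacent to $\tau$ would have its other two sides on the boundary, forcing the surface to be exactly that excluded disc (one checks that the cut construction cannot manufacture this configuration either). The paper also argues the converse direction differently from you: rather than your local analysis, it observes that if both endpoint fans were non-trivial, the vertex $x=\tau$ would acquire either two incoming or two outgoing arrows, contradicting the definition of a trivial permitted thread. You should add the ``not both trivial'' argument to your write-up; without it the bijection is not established.
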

\begin{proof}
From the construction of $Q_{T^\dd}$ it follows that the non-trivial permitted threads of $Q_{T^\dd}$
correspond to the complete fans in $(S,M^\dd,T^\dd)$ of width at least 2.  

Now consider a trivial permitted thread; this corresponds to a vertex $x\in Q_{T^\dd}$ with at most
one arrow starting at $x$ and at most one arrow ending at $x$.  By the construction of $Q_{T^\dd}$, the vertex
$x$ corresponds to an arc in $T^\dd$, and this arc is contained in exactly two complete fans, one at each 
endpoint of the arc.  If both fans are non-trivial, then this configuration generates either two arrows
ending at the vertex $x$ in $Q_{T^\dd}$ or two arrows starting at the vertex $x$, which contradicts our 
assumption that $x$ corresponds to a trivial permitted thread.  Hence one of the two fans must be trivial.

Since we are excluding the case where $(S,M)$ is a disc with four marked points, it follows from the
 construction of $T^\dd$, that it is impossible that both fans at the endpoint of an arc are trivial.  
 This shows that the trivial permitted threads are in bijection with the trivial complete fans.
\end{proof}

\subsection{Forbidden threads, triangles and quasi-triangles}
\begin{lemma}\label{lem forbidden to tri}
The construction of $Q_{T^\dd}$ induces a bijection between the set of forbidden threads  of length 
at most two of $Q_{T^\dd}$ and the set of non-internal triangles and \quasitri{s} in  $T^\dd$.
Moreover, under this bijection
\begin{enuma}
\item forbidden threads of length two correspond to  \quasitri{s},
\item forbidden threads of length one correspond to triangles with 
	exactly one side on the boundary,
\item forbidden threads of length zero correspond to triangles with
	exactly two sides on the boundary.
\end{enuma}
\end{lemma}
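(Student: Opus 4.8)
The plan is to read off the forbidden threads of $Q_{T^\dd}$ directly from the presentation of $B_{T^\dd}$. By Lemma~\ref{lem admissible cut}, $Q_{T^\dd}$ is obtained from $Q_T$ by deleting the arrow $\beta_i\to\alpha_i$ of the $3$-cycle of each cut triangle $\triangle_i$; and by Definition~\ref{def surface algebra} the ideal $I_{T^\dd}$ is generated by the three length-two subpaths of the $3$-cycle of every \emph{uncut} internal triangle of $T$, together with the single length-two path $\alpha_i^\dd\to\gamma_i\to\beta_i^\dd$ for every \emph{cut} internal triangle $\triangle_i$, equivalently for every quasi-triangle $\triangle_i^\dd$ of $T^\dd$. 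Since $I_{T^\dd}$ is generated by paths of length two, its length-two elements are exactly these generators, so a forbidden path is a walk all of whose consecutive pairs of arrows are such generators, and, $B_{T^\dd}$ being gentle, condition (G3) guarantees that each arrow has at most one relation-predecessor and at most one relation-successor. The first step is to show that the relation of a quasi-triangle is \emph{isolated}: the arrow $\alpha_i^\dd\to\gamma_i$ has no relation-predecessor and the arrow $\gamma_i\to\beta_i^\dd$ has no relation-successor, so $\alpha_i^\dd\to\gamma_i\to\beta_i^\dd$ is itself a forbidden thread, of length exactly two. By contrast each arrow of an uncut $3$-cycle lies in two of the cycle's three length-two subpaths, so no relation inside such a cycle is isolated and every forbidden thread using a relation of the cycle has length three.

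Granting this, the argument sorts forbidden threads by length. \emph{Length two:} a forbidden thread of length two is precisely an isolated relation, hence corresponds to a quasi-triangle, which is part (a). \emph{Length one:} a forbidden thread of length one is an arrow of $Q_{T^\dd}$ lying in no relation; such an arrow is produced by a unique region of $T^\dd$, and that region can be neither an internal triangle (which produces a $3$-cycle whose arrows all lie in relations), nor a quasi-triangle (whose two arrows both lie in the relation $\alpha_i^\dd\to\gamma_i\to\beta_i^\dd$), nor a triangle with two sides on the boundary (which produces no arrow), so it is a triangle with exactly one side on the boundary; conversely the unique arrow produced by such a triangle lies in no relation. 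This is part (b). \emph{Length zero:} a forbidden thread of length zero is a vertex $v$ of $Q_{T^\dd}$ with at most one incoming and at most one outgoing arrow such that every incoming $b$ and outgoing $c$ satisfy $bc\in I_{T^\dd}$. Here $v$ is an arc $\tau_v$ of $T^\dd$ bounding two regions, and since $bc\in I_{T^\dd}$ forces $b$ and $c$ to be produced by a single region, a short case analysis over the possible pairs of regions (triangle with one boundary side, cut or uncut internal triangle, quasi-triangle) shows that the trivial-thread condition at $v$ holds exactly when one of the two regions is a triangle with two sides on the boundary. This is part (c).

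Injectivity of the three correspondences is then immediate: distinct quasi-triangles carry distinct isolated relations, distinct triangles with one boundary side produce distinct arrows, and distinct triangles with two boundary sides have distinct arc-sides --- here one uses that no arc can bound two triangles each having two sides on the boundary, since such a configuration forces $(S,M)$ to be a disc with four marked points, which is excluded throughout Section~\ref{sect 3}. Together with the surjectivity statements already contained in the length analysis, this yields the three bijections.

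I expect the isolatedness claim, and more generally the bookkeeping of \emph{parallel arrows}, to be the main obstacle. When two quasi-triangles $\triangle_i^\dd$ and $\triangle_j^\dd$ are glued along two common sides, the arrows $\gamma_i\to\beta_i^\dd$ and $\alpha_j^\dd\to\gamma_j$ (or $\alpha_i^\dd\to\gamma_i$ and $\gamma_j\to\beta_j^\dd$) join the same ordered pair of vertices, and one must check that these are genuinely distinct parallel arrows --- one ``old'', already present in $Q_{\overline{T}^\dd}$, the other ``new'', created by collapsing a path $\alpha_k^\dd\to\e_k\to\gamma_k$ --- so that the relation of one quasi-triangle does not concatenate with that of the other into a longer forbidden thread. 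I would organize this by recording, for each arrow of $Q_{T^\dd}$, which region of $T^\dd$ produces it and whether it is old or new, and then using (G3) and (G4) to bound, for each arrow, the arrows with which it forms a relation.
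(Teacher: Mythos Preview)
Your approach matches the paper's: both proceed by case analysis on the length of the forbidden thread and identify each case with the corresponding type of region in $T^\dd$. Your arguments for parts (b) and (c) are essentially identical to the paper's; for part (a) you are actually more explicit than the paper in verifying that the quasi-triangle relation $\alpha_i^\dd\to\gamma_i\to\beta_i^\dd$ is a forbidden \emph{thread} (i.e., maximal) rather than merely a forbidden path --- the paper simply asserts this.

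The parallel-arrow concern you flag is legitimate but easily dispatched, and you should not leave it as an open worry. By Lemma~\ref{lem admissible cut}, every arrow of $Q_{T^\dd}$ is already an arrow of $Q_T$, and by the construction of $Q_T$ each arrow is produced by a unique triangle of $T$. Every generating relation of $I_{T^\dd}$ is a length-two path whose two arrows come from the \emph{same} triangle; hence two generating relations can concatenate into a longer forbidden path only if they share an arrow, and therefore only if they come from the same triangle. A cut triangle contributes exactly one relation, which is thus isolated; an uncut internal triangle contributes three, which close up into the $3$-cycle. So even when two quasi-triangles are glued along two common arcs and produce parallel arrows between the same pair of vertices, those arrows carry different triangle-labels and the corresponding relations cannot concatenate. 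No old/new bookkeeping is required beyond this observation.
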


\begin{remark}
\begin{enuma}
\item Each internal triangle in $T\dag$ gives rise to three forbidden threads of length three.
\item There are no forbidden threads in $Q_{T\dag}$ of length greater than three.
\item If $T\dag=T$ then there are no forbidden threads of length two.
\end{enuma}
\end{remark}

\begin{proof}
Forbidden threads of length two can only occur at the local cuts, since all relations in
$B_T$ occur in oriented 3-cycles, and hence give forbidden threads of
length three. Thus the forbidden threads of length two are precisely the
paths $\alpha_i^\dd \to \gamma_i\to \beta_i^\dd$ in $Q_{T^\dd}$ which were obtained from the 
3-cycles 
 $\alpha_i\to \gamma_i\to \beta_i \to \alpha_i$
by the cuts $\cut{v_i}{\alpha_i}{\beta_i}$.   These paths correspond to the \quasitri\ $\triangle_i^\dd$. This shows (a).

A forbidden thread of length one in $Q_{T^\dd}$ is an arrow $x\to y$ which does not appear in any
relation, which means that the arcs corresponding to $x$ and $y$ in the triangulation $T^\dd$ bound
a triangle which is not an internal triangle and also not a \quasitri\ and which  has two interior sides
(corresponding to $x$ and $y$). This shows (b).

Finally, a trivial forbidden thread in $Q_{T^\dd}$ is a vertex $x$ that is either a sink  with only one arrow 
ending at $x$, or a source with only one arrow starting at $x$, or the middle vertex of a zero relation of 
length two and the two arrows from this zero relation are the only two arrows at $x$. 
If $x$ is such a source or sink, then  it follows  from the construction of $Q_{T^\dd}$ that the triangle on one side of the 
arc corresponding to $x$ in $T^\dd$ must not generate an arrow in $Q_{T^\dd}$. Thus $x$ is a side
of a triangle that has two sides on the boundary.  On the other hand, if $x$ is such a middle vertex of a zero relation, then 
the triangle on one of side of the arc corresponding to $x$ in $T^\dd$ must be an internal triangle or   a \quasitri.  
On the other side, we must have a triangle with two sides  on the boundary, since there are no further 
arrows at $x$. 

Conversely, given any triangle with two sides on the boundary and the third side $\gamma$ corresponding to
a vertex $x\in Q_{T^\dd}$, we can deduce that $x$ is a trivial forbidden thread, because, on the other side 
of $\gamma$, we have either a triangle with one side on the boundary or an internal triangle, 
respectively \quasitri, thus $x$ is either a vertex with only one arrow incident to it or the middle vertex
of a zero relation with no further arrows incident to it.
\end{proof}

If $F\in \calf$ is a forbidden thread, we denote by $\triangle_F$ the corresponding triangle or quasi-triangle in $Q_{T^\dd}$, 
and if $H\in \calh$ is a permitted thread, we denote by $v_H$ the marked point in $H$ such that $H$ 
corresponds to the complete fan at $v_H$. 

\subsection{AG-invariant}

We now want to compute the AG-invariant of $B_{T^\dd}$ in terms of the partially triangulated surface
$(S,M^\dd,T^\dd)$.  To do so, we need
to describe how to go from a forbidden thread to a the following  permitted thread and from a 
permitted thread to the following forbidden thread as in the AG-algorithm.  

\begin{lemma}\label{lem varphi(F)}
Let $F\in \calf$ be a forbidden thread in $Q_{T^\dd}$, and let $x=s(F)$ be its starting vertex in
$Q_{T^\dd}$. Then there exists a unique permitted thread $\varphi(F)\in \calh$ such that $s(\varphi(F))=x$
and $\sigma(\varphi(F))=-\sigma(F)$. Moreover, the marked point $v_{\varphi(F)}$ of the complete
fan of $\varphi(F)$ is the unique vertex of $\triangle_F$ that is the starting point of the arc
corresponding to $x$ in $T^\dd$, with respect to the counterclockwise orientation of $\triangle_F$, see
Figure~\ref{fig varphi(F)}.
\end{lemma}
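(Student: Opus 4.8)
The plan is to translate both objects in sight — permitted threads and forbidden threads — into their geometric counterparts via Lemmas~\ref{lem permitted to v} and~\ref{lem forbidden to tri}, and then verify the claimed matching locally at the starting vertex $x$. First I would recall that, by Lemma~\ref{lem forbidden to tri}, the forbidden thread $F$ corresponds to a triangle or \quasitri\ $\triangle_F$ in $T^\dd$; since $x=s(F)$, the arc $\tau_x$ corresponding to $x$ is one of the two interior sides of $\triangle_F$ (when $F$ has length $\ge 1$) or the unique interior side (when $F$ has length zero). The orientation of the surface induces the counterclockwise orientation on $\triangle_F$, and with respect to this orientation $\tau_x$ has a well-defined starting vertex, call it $w$; this $w$ is the candidate for $v_{\varphi(F)}$. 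Conversely, by Lemma~\ref{lem permitted to v}, the complete fan at $w$ corresponds to a permitted thread, and I would check that this permitted thread starts at $x$: indeed, the first arc of the fan at $w$, read in the counterclockwise direction around $w$ starting from the boundary, is exactly $\tau_x$, because $\triangle_F$ is the (non-internal) triangle or \quasitri\ sitting immediately counterclockwise-adjacent to $\tau_x$ at $w$, so $\tau_x$ is the ``first'' arc of the fan.

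Next I would nail down existence and uniqueness via the sign functions. The defining conditions on $\sigma,\e$ (conditions (1)--(3) in the definition of the AG-invariant) ensure that at the vertex $x$ there is, up to the global sign flip, a well-defined bijection between arrows (and trivial threads) starting at $x$ and the value of $\sigma$. Concretely: if $x$ is not a source, exactly two arrows start at $x$ (if $Q_{T^\dd}$ has two) and they receive opposite $\sigma$-values by (1); the forbidden thread $F$ uses one of them (or, if $F=e_x$ is trivial, $x$ is ``source-like'' for $F$), and the permitted thread we want must use the other, forcing $\sigma(\varphi(F)) = -\sigma(F)$ and pinning down $\varphi(F)$ uniquely. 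I would match this combinatorial dichotomy with the geometric one: the two arrows out of $x$ correspond to the two fans at the endpoints of $\tau_x$, equivalently to the two sides of $\tau_x$; $F$ ``lives on'' the $\triangle_F$-side, and $\varphi(F)$ is forced to be the fan at the endpoint of $\tau_x$ that is the counterclockwise-starting vertex of $\tau_x$ as seen from $\triangle_F$. This is precisely the claim that $v_{\varphi(F)} = w$. The case analysis should be organized by the length of $F$ (lengths $0$, $1$, $2$, handled by parts (c), (b), (a) of Lemma~\ref{lem forbidden to tri} respectively), together with whether $\tau_x$ is incident to a \quasitri, so that $\varphi(F)$ may itself be trivial or non-trivial.

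The main obstacle I expect is the bookkeeping of orientations: one must be careful that the counterclockwise orientation of $\triangle_F$ as a triangle in the surface is compatible with the convention (in Section~\ref{sect surfaces}) that determines the direction of arrows in $Q_T$ — arrows point from $\tau_i$ to $\tau_j$ when $\tau_j$ follows $\tau_i$ going counterclockwise around a triangle — and, crucially, that this convention survives the cutting procedure that replaces the path $\alpha_i^\dd\to\e_i\to\gamma_i$ by $\alpha_i^\dd\to\gamma_i$. In particular, for a \quasitri\ (where $F$ has length two and $\triangle_F = \triangle_i^\dd$ for some $i$) one must check that the starting vertex $x = s(F) = \alpha_i^\dd$ of the length-two forbidden thread really is the arc whose counterclockwise-starting vertex in $\triangle_i^\dd$ is the fan-vertex $v_{\varphi(F)}$; this is where the explicit local picture of Figure~\ref{fig ei} and the discussion preceding Definition~\ref{def surface algebra} must be invoked. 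Once the orientation conventions are lined up, the rest is a direct comparison of the two local descriptions and an appeal to the uniqueness already guaranteed by the AG-algorithm (step~\eqref{alg:FtoH} in the definition of $\AG$).
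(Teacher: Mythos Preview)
Your proposal is correct and follows essentially the same approach as the paper's proof: both establish existence and uniqueness of $\varphi(F)$ by analyzing the arrows starting at $x$ together with the sign condition on $\sigma$, and then identify the fan vertex $v_{\varphi(F)}$ geometrically via Lemmas~\ref{lem permitted to v} and~\ref{lem forbidden to tri}. The paper's argument is briefer---it splits only into the two cases $F$ trivial versus $F$ non-trivial (rather than by length $0,1,2$), and it does not dwell on the orientation bookkeeping you flag as a potential obstacle, appealing instead directly to the figure.
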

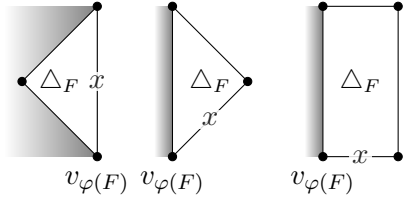
\begin{figure}
\begin{tikzpicture}
{[]
	\shade[shading=axis,shading angle=-90] (1,-1)	rectangle (-.25,1);
	\filldraw[fill=white] (0,0) node[pos=0,solid] {} 
		-- +(1,1)  node[pos=1,solid] {} 
	          -- +(1,-1) node[pos=1,solid,label=below:$v_{\varphi(F)}$]{} node[int] {$x$} 
	          -- (0,0);
	\node[font=\small] at (.5,0) {$\triangle_F$};
}
{[xshift=2cm]
	\shade[shading=axis,shading angle=-90] (0,1)	rectangle (-.25,-1);
	\filldraw[fill=white] (0,1)  
		-- ++(1,-1)  node[pos=0,solid] {} node[pos=1,solid] {} 
	          -- ++(-1,-1) node[pos=1,solid,label=below:$v_{\varphi(F)}$]{} node[int] {$x$}
	          -- cycle;
	\node[font=\small] at (.5,0) {$\triangle_F$};
}
{[xshift=4cm]
	\shade[shading=axis,shading angle=-90] (0,1)	rectangle (-.25,-1);
	\filldraw[fill=white] (0,1)  
		-- ++(1,0)  node[pos=0,solid] {} node[pos=1,solid] {} 
		-- ++(0,-2) node[pos=1,solid] {} 
	          -- ++(-1,0) node[pos=1,solid,label=below:$v_{\varphi(F)}$]{} node[int] {$x$}
	          -- cycle;
	\node[font=\small] at (.5,0) {$\triangle_F$};
}
\end{tikzpicture}
\caption{ The relative position of the marked point $v_{\varphi(F)}$; on the left, $F$ is the
trivial forbidden thread; in the middle, $F$ is of length one; and, one the right, $F$ is of length two.}
\label{fig varphi(F)} 
\end{figure}

\begin{proof} The existence and uniqueness of $\varphi(F)$ follows already from \cite{AG}, but we
include a proof here, for convenience. If $F$ is a trivial forbidden thread, then there is at most
one arrow start at $x$, hence there is a unique permitted thread $\varphi(F)$ starting at $x$. By
Lemma~\ref{lem permitted to v}, $\varphi(F)$ corresponds to a complete fan in which the first arc
corresponds to $x$. It follows that the vertex of this fan is the one described in the Lemma.

Now suppose that $F$ is not trivial, and let $\alpha$ be its initial arrow. Then there are two
permitted threads starting at $x$, one of which has also $\alpha$ as initial arrow. However, since
$\sigma(F)=\sigma(\alpha)$, the condition $\sigma(\varphi(F))=-\sigma(F)$ excludes the possibility
that $\varphi(F)$ starts with $\alpha$; thus $\varphi(F)$ is the other permitted thread starting at
$x$.  Again using Lemma~\ref{lem permitted to v}, we see that $\varphi(F)$ corresponds to the 
complete fan whose vertex is the marked point $v_{\varphi(F)}$ described in the Lemma. Note that
if $\alpha$ is the only arrow starting at $x$, then $\varphi(F)$ trivial. 
\end{proof}

\begin{lemma}\label{lem psi(H)}
Let $H\in \calh$ be a permitted thread in $Q_{T^\dd}$ and let $y=t(H)$ be its terminal vertex in
$Q_{T^\dd}$. Then there exists a unique forbidden thread $\psi(H)\in \calf$ such that $t(\psi(H))=y$
and $\e(\psi(H))=-\e(H)$. Moreover, the triangle or \quasitri\ $\triangle_{\psi(H)}$ of $\psi(H)$ is
the unique triangle or \quasitri\ which is adjacent to the arc corresponding to $y$ and incident to
$v_H$, see Figure~\ref{fig psi(H)}.
\end{lemma}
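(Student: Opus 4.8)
The plan is to mirror the argument of Lemma \ref{lem varphi(F)}, but now working at the terminal end of $H$ rather than the initial end, and using $\e$ in place of $\sigma$. First I would record the existence and uniqueness of $\psi(H)$: this is part of the general formalism of \cite{AG} (see Remark \ref{rmk:bijection}), so one may simply quote it, but a short self-contained argument is worth including. Let $y=t(H)$ be the terminal vertex of $H$ in $Q_{T^\dd}$. If there is only one arrow $b$ ending at $y$ (or none), then there is a unique forbidden thread ending at $y$, and it automatically satisfies $\e(\psi(H))=-\e(H)$ by the defining conditions on $\e$ together with the extension of $\e$ to trivial threads. If there are two arrows $b_1,b_2$ ending at $y$, one of them, say $b_1$, is the terminal arrow of $H$; since $\e(H)=\e(b_1)$ and $\e(b_1)=-\e(b_2)$ by condition (2) on $\e$, the requirement $\e(\psi(H))=-\e(H)=\e(b_2)$ forces $\psi(H)$ to be the forbidden thread ending with $b_2$, which exists and is unique because $Q_{T^\dd}$ is gentle. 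This gives both existence and uniqueness.

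Next I would identify $\triangle_{\psi(H)}$ geometrically. By Lemma \ref{lem permitted to v}, $H$ corresponds to the complete fan at $v_H$, and $y$ is the \emph{last} arc crossed by the fan curve $\gamma$, i.e. the arc $\tau_j$ in the notation of Figure \ref{fig fan def}; the arc $\tau_j$ is incident to $v_H$, and the fan curve exits through the region lying on the far side of $\tau_j$ from the rest of the fan. That region is a triangle or quasi-triangle of $T^\dd$ having $\tau_j$ as a side and having $v_H$ as a vertex. Call this triangle or quasi-triangle $\triangle$. I claim $\triangle=\triangle_{\psi(H)}$. To see this, use Lemma \ref{lem forbidden to tri}: the forbidden threads through the arc $y=\tau_j$ correspond exactly to the non-internal triangles and quasi-triangles in $T^\dd$ adjacent to $\tau_j$ of which $\tau_j$ is an interior side, together with the zero-relation configuration through $y$ if $\tau_j$ lies on an internal triangle or quasi-triangle. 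The two sides of $\tau_j$ in $T^\dd$ meet the two endpoints of $\tau_j$ in a way compatible with the two complete fans at those endpoints, so exactly one of the (at most two) forbidden threads ending at $y$ uses the triangle on the $v_H$-side of $\tau_j$; the sign condition $\e(\psi(H))=-\e(H)$ is precisely the one that selects it, because $\e(H)=\e(b_1)$ where $b_1$ is the arrow associated to traversing $\tau_j$ within the fan at $v_H$ (i.e. on the side of $\tau_j$ away from $\triangle$), and condition (3) linking $\sigma$ and $\e$ across a non-relation pins down which side of $\tau_j$ each of the two arrows at $y$ sees. Hence $\psi(H)$ is the forbidden thread whose triangle or quasi-triangle is exactly $\triangle$, the one on the $v_H$-side of $\tau_j$, as claimed; this is the content of Figure \ref{fig psi(H)}.

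The main obstacle I anticipate is the bookkeeping of the sign functions $\sigma,\e$ against the counterclockwise orientation of the surface, i.e. verifying carefully that the combinatorial condition $\e(\psi(H))=-\e(H)$ translates into ``the triangle of $\psi(H)$ lies on the $v_H$-side of the arc $y$'' and not the other side. This requires tracking, at the vertex $y$ of $Q_{T^\dd}$, exactly which of the two incident arrows corresponds to which of the two triangles adjacent to $\tau_j$, and how the construction of $Q_{T^\dd}$ (including the replacement of the paths $\alpha_i^\dd\to\e_i\to\gamma_i$ by arrows $\alpha_i^\dd\to\gamma_i$ at the quasi-triangles) interacts with the functions $\sigma,\e$ chosen on $Q_{\overline{T}^\dd}$ versus those induced on $Q_{T^\dd}$. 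I would handle this by a case analysis according to whether $\psi(H)$ has length $0$, $1$, or $2$ (equivalently, by Lemma \ref{lem forbidden to tri}, whether $\triangle_{\psi(H)}$ is a triangle with two boundary sides, a triangle with one boundary side, or a quasi-triangle), checking each of the three local pictures in Figure \ref{fig psi(H)} directly against the sign conventions, exactly as in the three cases of Figure \ref{fig varphi(F)}. The rest of the argument is then a routine consequence of Lemmas \ref{lem permitted to v} and \ref{lem forbidden to tri}.
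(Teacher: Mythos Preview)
Your proposal is correct and follows essentially the same approach as the paper: both arguments rely on Lemmas~\ref{lem permitted to v} and~\ref{lem forbidden to tri} and a short case analysis on the local picture at the arc $y$, verifying the sign condition $\e(\psi(H))=-\e(H)$ via the terminal arrow of $H$ versus the arrow induced by the adjacent triangle or quasi-triangle. The only cosmetic difference is that the paper begins geometrically---first observing that the side of $\triangle$ following $y$ counterclockwise must be a boundary segment (since the fan at $v_H$ is complete) and then checking the sign---whereas you first settle existence and uniqueness algebraically and then match to the geometry; both orderings are fine.
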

\begin{figure}
\begin{tikzpicture}
{[]
\draw (0,0) node[solid,label=135:$v_H$] {}
	-- ++(-30:2.25cm) 
		node[int] {$y$}
		node[solid,pos=1] {} 
		node[left=.33cm,pos=1] {$\triangle_{\psi(H)}$}
	-- ++(-150:2.25cm) 
		node[solid,pos=1,name=b] {} 
	-- cycle;
\draw (0,0)  -- (20:2cm) 
	(0,0)  -- (10:2cm)
	(0,0)  -- (0:2cm);
\draw[loosely dotted] (-1:1.9cm).. controls (-15:2cm)  .. (-29:1.9cm)
	node[pos=.5,right=.25cm] {$\Fan(v_H)$};
	
{[on background layer] \shade[shading=axis,shading angle=-90] (-.25,0) rectangle (b); }
}	
{[xshift=4.5cm]
\draw (0,0) node[solid,label=135:$v_H$] {}
	-- ++(-25:2.25cm) 
		node[int] {$y$}
		node[solid,pos=1] {} 
	--++(-90:.75cm)
		node[solid,pos=1,name=c] {} 
		node[above left=.33cm,pos=1] {$\triangle_{\psi(H)}$}
	(0,0) -- ++(-90:2.5cm) 
		node[solid,pos=1,name=b] {} 
	-- (c);
\draw (0,0)  -- (20:2cm) 
	(0,0)  -- (10:2cm)
	(0,0)  -- (0:2cm);
\draw[loosely dotted] (-1:1.9cm).. controls (-15:2cm)  .. (-24:1.9cm)
	node[pos=.5,right=.25cm] {$\Fan(v_H)$};
	
{[on background layer] \shade[shading=axis,shading angle=-90] (-.25,0) rectangle (b); }
}	
\end{tikzpicture}
\caption{}\label{fig psi(H)}
\end{figure}

\begin{proof}
Again, the existence and uniqueness already follows from \cite{AG}, but we include a
proof here too. 
It follows from Lemma~\ref{lem permitted to v} that the side of the triangle $\triangle_{\psi(H)}$
which is following $y$ in the counterclockwise order must be a boundary segment, since the fan at $v_H$
is a complete fan and $y=t(H)$. It follows from Lemma~\ref{lem forbidden to tri} that the forbidden
thread corresponding to $\triangle_{\psi(H)}$ ends in $y$. Let $b$ be the terminal arrow in $H$.
If $\triangle_{\psi(H)}$ has two sides on the boundary, then $\psi(H)$ is a trivial forbidden thread
and $\e(\psi(H))=-\e(b)=-\e(H)$. Note that in this case, $b$ is the only arrow in $Q_{T^\dd}$
ending  at $y$, and, since $\e(b)=\e(H)$, it follows that $\psi(H)$ is unique.  

Suppose now that $\triangle_{\psi(H)}$ has only one side on the boundary.  Since this boundary
segment follows $y$ in the counterclockwise orientation, $\triangle_{\psi(H)}$ induces an arrow
$b'$ in $Q_{T^\dd}$, with $t(b')=y$, and this arrow is the terminal arrow of $\psi(H)$.  
Since $b$ and $b'$ end at the same vertex, we have $\e(\psi(H)) = \e(b') = -\e(b) = -\e(H)$.
This shows the existence of $\psi(H)$, and its uniqueness follows as above. 
\end{proof}

We are now able to prove the main result of this section, the computation of the AG-invariant for
an arbitrary surface algebra. 

For any triangulated surface $(S,M,T)$ and any boundary component $C$ of $S$, denote by $M_{C,T}$
the set of marked points on $C$ that are incident to at least one arc in $T$. Let $n(C,T)$ be the
cardinality of $M_{C,T}$, and let $m(C,T)$ be the number of boundary segments on $C$ that 
have both endpoints in $M_{C,T}$.

\begin{thm}\label{thm AG calc}
Let $B=B_{T^\dd}$ be a surface algebra of type $(S,M,T)$ given by a cut $(S,M^\dd,T^\dd)$.  Then
the AG-invariant $AG(B)$ can be computed as follows:
\begin{enuma}
\item The ordered pairs $(0,3)$ in $AG(B)$ are in bijection with the internal triangles in $T^\dd$, and 
there are no ordered pairs $(0,m)$ with $m\neq 3$.
\item The ordered pairs $(n,m)$ in $AG(B)$ with $n\neq 0$ are in bijection with the boundary 
components of $S$. Moreover, if $C$ is  a boundary component, then the corresponding ordered
pair $(n,m)$ is given by
\[n= n(C,T) +\ell , \quad 	m= m(C,T) +2\ell, \]
where $\ell$ is the number of local cuts $\chi_{v,\alpha,\beta}$  in $(S,M^\dd,T^\dd)$ such that $v$ 
is a point on $C$. 
\end{enuma}
\end{thm}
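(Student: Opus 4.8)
The plan is to run the AG-algorithm from the definition, but to translate each of its two steps into the geometry of $(S,M^\dd,T^\dd)$ using the dictionary established in Lemmas~\ref{lem permitted to v}, \ref{lem forbidden to tri}, \ref{lem varphi(F)} and \ref{lem psi(H)}. Part (a) is the easy half. By step (3) of the definition of $\AG$, the pairs $(0,m)$ come exactly from oriented cycles all of whose consecutive arrows form relations. Since $Q_{T^\dd}$ is obtained from $Q_T$ by deleting one arrow in each chordless $3$-cycle corresponding to a cut internal triangle (Lemma~\ref{lem admissible cut}), the surviving oriented cycles with all-consecutive relations are precisely the oriented $3$-cycles of $Q_{T^\dd}$ coming from the internal triangles of $T^\dd$ that were \emph{not} cut. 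Each such cycle has length $3$, giving a pair $(0,3)$, and there is nothing of the form $(0,m)$ with $m\ne 3$ because the only relations in $B_{T^\dd}$ sitting inside oriented cycles are the length-$2$ subpaths of these $3$-cycles. This proves (a).

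For part (b), I would fix a boundary component $C$ and trace one full cycle of the algorithm $H_0 \xrightarrow{\varphi} F_0 \xrightarrow{\psi} H_1 \xrightarrow{\varphi} F_1 \to \cdots$ starting from a permitted thread $H_0$ whose complete fan sits at a marked point of $C$. The key geometric claim, to be proved by induction along the cycle, is that the entire orbit stays on $C$: if $H_i$ corresponds to the fan at a vertex $v_{H_i}\in C$, then by Lemma~\ref{lem psi(H)} the triangle or quasi-triangle $\triangle_{\psi(H_i)}$ is the one adjacent to $t(H_i)$ and incident to $v_{H_i}$, whose "outgoing" side (counterclockwise after $t(H_i)$) is a boundary segment of $C$; and then by Lemma~\ref{lem varphi(F)} the next fan $v_{H_{i+1}}$ is the next vertex of that triangle/quasi-triangle in counterclockwise order along $C$. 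Thus the algorithm literally walks counterclockwise around the marked points on $C$ that carry an arc of $T^\dd$, i.e. around $M_{C,T^\dd}$, jumping from each such marked point to the next one, and closes up after visiting each exactly once. Hence each boundary component $C$ contributes exactly one pair $(n,m)$, and $n$ equals the number of steps, which is $|M_{C,T^\dd}|$.

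It then remains to express $n$ and $m$ in terms of the \emph{original} data $(S,M,T)$ plus the number $\ell$ of cuts on $C$. Each local cut $\chi_{v,\alpha,\beta}$ with $v\in C$ replaces the marked point $v$ by two marked points $v',v''$, both on $C$, both incident to arcs of $T^\dd$ (namely $\alpha^\dd$ or $\beta^\dd$ and the relevant $\gamma$), and inserts one new boundary segment between $v'$ and $v''$; uncut marked points of $M_C$ remain incident to the same arcs. Therefore $|M_{C,T^\dd}| = n(C,T) + \ell$, giving $n = n(C,T)+\ell$. For $m$, recall $m=\sum_i \ell(F_i)$ over one cycle. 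Using Lemma~\ref{lem forbidden to tri}, the forbidden thread $F_i=\psi(H_i)$ has length $0$, $1$, or $2$ according to whether $\triangle_{\psi(H_i)}$ has two boundary sides, one boundary side, or is a quasi-triangle; and a cleaner bookkeeping is to note that summing $\ell(F_i)$ as we walk around $C$ counts each boundary segment of $C$ lying between two points of $M_{C,T^\dd}$ once — so $m = m(C,T^\dd)$, the number of such boundary segments. Finally, relative to $(S,M,T)$, each cut on $C$ adds one new boundary segment (between $v'$ and $v''$) and turns the single marked point $v$ into two, so it adds $2$ to the count of boundary segments with both endpoints in the relevant marked set: $m(C,T^\dd) = m(C,T) + 2\ell$, giving $m = m(C,T)+2\ell$.

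The main obstacle I anticipate is the inductive "the orbit stays on $C$ and visits each marked point of $M_{C,T^\dd}$ exactly once" step: one must check carefully, using the counterclockwise conventions in Lemmas~\ref{lem varphi(F)} and \ref{lem psi(H)} together with the definition of a complete fan (a fan at $v$ is bounded on both sides by boundary segments, after the construction of $T^\dd$), that consecutive applications of $\psi$ then $\varphi$ advance exactly one marked point counterclockwise along $C$, never skipping a marked point of $M_{C,T^\dd}$ and never revisiting one before the cycle closes. Handling the degenerate cases — trivial fans (width one), trivial forbidden threads, and triangles with two boundary sides — within this single uniform "walk around $C$" picture is where the care is needed; the excluded disc with four marked points and the hypotheses already in place (no empty-fan/empty-fan arc, $S$ not the small disc) are exactly what rule out the pathologies. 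Everything else is then the bookkeeping in the previous paragraph.
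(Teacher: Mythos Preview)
Your overall strategy is exactly the paper's: translate the AG-algorithm into a walk around a boundary component $C$ via Lemmas~\ref{lem permitted to v}--\ref{lem psi(H)}, identify $n$ with $|M_{C,T^\dd}|$, and account for $m$ by the lengths of the forbidden threads encountered. Part (a) and the computation of $n$ are fine and match the paper.

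However, your computation of $m$ contains a genuine bookkeeping error. You claim that ``summing $\ell(F_i)$ as we walk around $C$ counts each boundary segment of $C$ lying between two points of $M_{C,T^\dd}$ once'', i.e.\ that $m=m(C,T^\dd)$. This is false at the quasi-triangles: a quasi-triangle $\triangle^\dd$ with its single boundary side on $C$ gives a forbidden thread of length $2$, but contributes only \emph{one} boundary segment (the side $v'v''$) to $m(C,T^\dd)$. So in fact $m=m(C,T^\dd)+\ell$, not $m=m(C,T^\dd)$. You then compound this by asserting $m(C,T^\dd)=m(C,T)+2\ell$; but a single cut at $v\in C$ replaces the two segments $uv,\,vw$ by the three segments $uv',\,v'v'',\,v''w$, adding exactly one segment with both endpoints in the marked set, so $m(C,T^\dd)=m(C,T)+\ell$. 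Your two errors happen to cancel, but the argument as written is not valid.

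The correct count, as in the paper, is to read off $\ell(F_i)$ directly from Lemma~\ref{lem forbidden to tri}: it is $0$, $1$, or $2$ according as $\triangle_{F_i}$ has two boundary sides, one boundary side, or is a quasi-triangle. Summing around $C$ gives
\[
m=\#\{\text{triangles in }T^\dd\text{ with exactly one side on }C\}+2\cdot\#\{\text{quasi-triangles with one side on }C\}=m(C,T)+2\ell,
\]
since the first summand is unchanged by the cuts (only internal triangles are cut) and the second is $\ell$ by definition.
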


\begin{proof}
Part (a) follows directly from the construction of $B_{T^\dd}$.  To show (b), let 
	\[(H_0,F_0,H_1,F_1,\dots,H_{n-1},F_{n-1},H_n=H_0)\]
be a sequence obtained by the AG-algorithm.  Thus each $H_i$ is a permitted thread in 
$Q_{T^\dd}$, each $F_i$ is a forbidden thread, $F_i=\psi(H_i)$, and $H_{i+1} = \varphi(F_i)$, 
where $\varphi$ and $\psi$ are the maps described in the Lemmas~\ref{lem varphi(F)} 
and~\ref{lem psi(H)} respectively. 

By Lemma~\ref{lem permitted to v}, each permitted thread $H_i$ corresponds to a 
non-empty complete fan, thus to a marked point $v_{H_i}$ in $M^\dd$.  On the other
hand, Lemma~\ref{lem forbidden to tri} implies that each forbidden thread $F_i$ 
corresponds to a triangle or \quasitri\ $\triangle_{F_i}$ in $T^\dd$. 

Let $C$ be the boundary component containing $v_{H_0}$.  Lemma~\ref{lem psi(H)} implies
that $\triangle_{F_0}$ contains a boundary segment incident to $v_{H_0}$, and it follows then from 
Lemma~\ref{lem varphi(F)} that $v_{H_1}\in M^\dd$ is a marked point on the same boundary 
component $C$.  Note that if $F_0$ is a non-trivial forbidden thread then $v_{H_0}$ and $v_{H_1}$
are the two endpoints of the unique boundary  segment in $\triangle_{F_0}$, and, if $F_0$ is a 
trivial forbidden thread, the $\triangle_{F_0}$ is a triangle with two sides on the boundary and 
$v_{H_0}$, $v_{H_1}$ are the two endpoints of the side of $\triangle_{F_0}$ that is not on the boundary. 

Recursively, we see that each of the marked points $v_{H_i}\in M^\dd$ lies on the boundary component
$C$ and that the set of points $\{v_{H_0},v_{H_1},\dots,v_{H_{n-1}}\}$ is precisely the set of marked 
points in $M^\dd$ that lie on $C$ and that are incident to at least one arc in $T^\dd$. In particular,
$n=n(C,T)+\ell$.  

Recall that the number $m$ in the ordered pair $(n,m)$ is equal to the sum of the number of 
arrows appearing in the forbidden threads $F_0,F_1,\dots,F_{n-1}$.  For each $i$, the number of 
arrows in $F_i$ is zero if $\triangle_{F_i}$ has two sides on the boundary component $C$; it is one if
$\triangle_{F_i}$ is a triangle with one side on $C$; and it is two if $\triangle_{F_i}$ is a \quasitri\
with one side on $C$.  Taking the sum, we see that $m$ is the number of triangles in $T^\dd$
that have exactly one side on $C$ plus twice the number of \quasitri\ in $T^\dd$ that have
exactly one side on $C$.  Thus $m$ is equal to the number of boundary segments in $(S,M,T)$
on $C$ that have both endpoints in $M_{C,T}$ plus twice the number of local cuts $\cut{v}{\za}{\zb}$ with $v$ a marked point in $C$.
\end{proof}

\begin{remark}
The theorem holds for arbitrary cuts of $(S,M,T)$, thus, in particular, it computes the AG-invariant of the 
Jacobian algebra $B_T$ corresponding to the uncut triangulated surface $(S,M,T)$. 
\end{remark}

\begin{cor}
Any admissible cut surface algebra of the disc with $n+3$ marked points is derived
equivalent to the path algebra $A$ of the quiver
\[ 1\xrightarrow{\ \, a_1 \ } 2 \xrightarrow{\ \, a_2\ } 3 \xrightarrow{\,\ a_3\ } \cdots \xrightarrow{a_{n-2}} n-1 \xrightarrow{a_{n-1}} n\]
\end{cor}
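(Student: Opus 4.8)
The plan is to read the AG-invariant of an arbitrary admissible cut surface algebra $B=B_{T^\dd}$ of the disc with $n+3$ marked points off Theorem~\ref{thm AG calc}, to observe that $A$ itself is such an algebra, and then to invoke Theorem~\ref{thm AG}(b).

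First I would do the bookkeeping for the disc. Say $B$ comes from a triangulation $T$ of the $(n+3)$-gon with $i$ internal triangles; an admissible cut then consists of $\ell=i$ local cuts, all lying on the unique boundary component $C$. Counting triangles (there are $n+1$ of them) and boundary segments (there are $n+3$ of them), and letting $b_2$ be the number of triangles of $T$ with two sides on the boundary, one gets $b_2=i+2$ and that $Q_T$ has $n$ vertices and $n-1+i$ arrows. By Lemma~\ref{lem admissible cut}, $Q_{T^\dd}$ is obtained from $Q_T$ by deleting one arrow from each of the $i$ oriented $3$-cycles, so $Q_{T^\dd}$ has $n$ vertices and $n-1$ arrows; being connected (deleting arrows lying in cycles from a connected quiver leaves it connected), it is a tree, hence has at most one cycle, so Theorem~\ref{thm AG}(b) applies to $B$. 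Also, an admissible cut replaces every internal triangle of $T$ by a \quasitri\ and creates no new ones, so $T^\dd$ has no internal triangles and Theorem~\ref{thm AG calc}(a) yields no pairs $(0,m)$ in $\AG(B)$.

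Next I would compute the unique pair $(n',m')$ with $n'\neq 0$ that Theorem~\ref{thm AG calc}(b) assigns to $C$. The marked points of $M$ on $C$ lying on no arc of $T$ are exactly the vertices sitting between the two boundary sides of the $b_2$ triangles that have two boundary sides; any two such vertices are non-adjacent, since the two neighbours of such a vertex are the endpoints of the third, interior, side of that triangle, hence lie on an arc. Therefore $n(C,T)=(n+3)-b_2=n+1-i$ and $m(C,T)=(n+3)-2b_2=n-1-2i$. Substituting into Theorem~\ref{thm AG calc}(b) gives $n'=n(C,T)+\ell=n+1$ and $m'=m(C,T)+2\ell=n-1$, independently of $T$ and of the chosen admissible cut; thus $\AG(B)$ sends $(n+1,n-1)$ to $1$ and every other pair to $0$.

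Finally, $A$ is realized in this way: the fan triangulation $T_0$ of the $(n+3)$-gon, consisting of all diagonals from one fixed vertex, has no internal triangles, so it requires no cut ($T_0^\dd=T_0$), and $B_{T_0}=kQ_{T_0}$ is the path algebra of a linearly oriented $\AA_n$ quiver, i.e. $B_{T_0}\cong A$ (one can also just note that path algebras of all orientations of $\AA_n$ are derived equivalent via reflection functors). Hence $\AG(A)$ is the same function, so $\AG(B)=\AG(A)$, and since $B$ and $A$ are gentle with at most one cycle in their quivers, Theorem~\ref{thm AG}(b) gives that $B$ is derived equivalent to $A$. (Alternatively, Corollary~\ref{cor 3.6} already identifies $B$ with an iterated tilted algebra of type $\AA_n$, which is derived equivalent to $A$ by definition; this is the shorter but less self-contained route.) I expect the only genuinely delicate step to be the middle one: verifying that the two numbers produced by Theorem~\ref{thm AG calc} really are independent of $T$ and of the cut.
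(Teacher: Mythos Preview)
Your proposal is correct and follows essentially the same route as the paper: compute the AG-invariant of an arbitrary admissible cut of the disc via Theorem~\ref{thm AG calc} to get the single pair $(n+1,n-1)$, match it with $\AG(A)$, and conclude by Theorem~\ref{thm AG}(b). The only cosmetic difference is that the paper computes $\AG(A)$ by running the algorithm on the linear quiver directly, whereas you realize $A$ as the fan-triangulation surface algebra and reuse the same calculation; both also mention the shortcut through Corollary~\ref{cor 3.6}.
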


\begin{proof}
This statement already follows from Corollary \ref{cor 3.6}, since tilting induces an equivalence of derived categories. We give here an alternative proof using Theorem \ref{thm AG calc}. The  algebra $A$ has AG-invariant $(n+1,n-1)$, since in the AG algorithm we get the sequence
\[\begin{array}{lll} H_0=a_1a_2\cdots a_{n-1} & \quad\quad& F_0=e_n\\ H_1=e_n && F_1=a_{n-1}\\ H_2=e_{n-1}&& F_2=a_{n-2}\\ \qquad\vdots && \qquad\vdots \\H_n=e_1 && F_{n}=e_1\\ H_{n+1}=H_0\end{array}\]
which contains $n+1$ permissible threads and $n-1$ arrows in the forbidden threads.

On the other hand, Theorem \ref{thm AG calc} implies that the AG-invariant of an admissible cut surface algebra $B_{T^\dd}$ of the disc is given by \[(n(C,T)+\ell\ , \ m(C,T)+2\ell).\] It follows from an easy induction that the number of internal triangles in $T$ is equal to $n+1-n(C,T).$ Since the cut is admissible, this is also the number of local cuts. Thus 
\[ n(C,T)+\ell=n+1.\]
Moreover, $m(C,T)$ is equal to the number of boundary segments minus $2(n+3-n(C,T))$, thus $m(C,T)=2n(C,T)-n-3$, and it follows that \[m(C,T)+2\ell =n-1.\]
Alternatively, since there is only one ordered pair $(n,m)$ in the AG-invariant, the number $m$ is the number of arrows in the quiver, which is equal to $n-1$, since the quiver of an admissible cut from the disc with $n+3$ marked points is a tree on $n$  vertices.

Thus the AG-invariant of $B_{T^\dd}$ is equal to the AG-invariant of $A$. Since the quivers of both algebras have no cycles, now the result follows from Theorem \ref{thm AG}.
\end{proof}

\begin{remark}
It also follows from Theorem \ref{thm AG calc} that for surfaces other than the disc, the surface algebras obtained by admissible cuts of a fixed triangulation are in general not all derived equivalent.
\end{remark}

\subsection{Surface algebras of annulus type}
We make some observations for surface algebras coming from admissible cuts of a triangulated annulus.

\begin{cor}\label{cor annulus AG classes}
Let $S$ be an annulus, and let $B_1$, $B_2$ be surface algebras of type $(S,M,T)$ obtained by 
two admissible cuts of the same triangulation.  Then 
\begin{enuma}
\item $B_1$ and $B_2$ are derived equivalent 
if and only if $B_1 $ and $B_2$ have the same AG-invariant.
\item If on each boundary component the number of local cuts is the same in the
two admissible cuts then $B_1$ and $B_2$ are derived equivalent.
\end{enuma}
\end{cor}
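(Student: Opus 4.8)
The plan is to derive both parts of Corollary~\ref{cor annulus AG classes} directly from the general AG-invariant formula in Theorem~\ref{thm AG calc} together with the derived-equivalence classification in Theorem~\ref{thm AG}. The key structural observation is that an annulus has exactly two boundary components $C_1, C_2$ and no internal triangles survive an admissible cut in a way that would change the count below; more precisely, an annulus with a triangulation has $|M| = n$ arcs, the quiver $Q_T$ has exactly one oriented cycle that is non-oriented (an annulus gives a quiver with a single cycle, possibly non-oriented after cutting), so $Q_{T^\dd}$ has at most one cycle. This is precisely the hypothesis needed to invoke part (b) of Theorem~\ref{thm AG}, which says that gentle algebras with at most one cycle in their quiver are derived equivalent if and only if they have the same AG-invariant.

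First I would verify the claim about the quiver: for an annulus, any triangulation $T$ gives a quiver $Q_T$ whose underlying graph has exactly one cycle, and since an admissible cut deletes one arrow from each chordless oriented $3$-cycle (Lemma~\ref{lem admissible cut}, Theorem~\ref{thm adm cut}(a)), the quiver $Q_{T^\dd}$ still has at most one cycle in its underlying graph. Hence both $B_1$ and $B_2$ satisfy the hypothesis of Theorem~\ref{thm AG}(b). Part (a) of the corollary is then immediate: Theorem~\ref{thm AG}(a) gives the ``only if'' direction for any gentle algebras, and Theorem~\ref{thm AG}(b) gives the ``if'' direction under the one-cycle hypothesis just established.

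For part (b), I would apply Theorem~\ref{thm AG calc} to compute $AG(B_1)$ and $AG(B_2)$ explicitly. Since $B_1$ and $B_2$ are admissible cuts of the \emph{same} triangulation $T$, the internal triangles of $T$ are the same, so by Theorem~\ref{thm AG calc}(a) both algebras have the same number of ordered pairs $(0,3)$ and no other pairs $(0,m)$. For the pairs $(n,m)$ with $n \neq 0$, Theorem~\ref{thm AG calc}(b) says these are indexed by the two boundary components $C_1, C_2$, and for each $C_j$ the pair is
\[
\bigl(n(C_j,T) + \ell_j,\ m(C_j,T) + 2\ell_j\bigr),
\]
where $\ell_j$ is the number of local cuts at marked points on $C_j$. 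The quantities $n(C_j,T)$ and $m(C_j,T)$ depend only on $T$, hence agree for $B_1$ and $B_2$. So if the number of local cuts on each boundary component is the same for the two admissible cuts, i.e. $\ell_j^{(1)} = \ell_j^{(2)}$ for $j = 1,2$, then the two pairs coincide for each $C_j$, and therefore $AG(B_1) = AG(B_2)$. Combined with part (a), this yields that $B_1$ and $B_2$ are derived equivalent.

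I do not expect a serious obstacle here; the corollary is essentially a bookkeeping consequence of Theorem~\ref{thm AG calc}. The one point requiring a small argument is the verification that $Q_{T^\dd}$ has at most one cycle in its underlying graph for the annulus, so that Theorem~\ref{thm AG}(b) applies; this follows from the fact that the rank of the annulus is $n = |M|$ and that a triangulation of the annulus has exactly $n$ triangles (from the formula $n - 2(g-1) - b$ with $g=0, b=2$), which forces the underlying graph of $Q_T$ to have first Betti number one, a property preserved under deleting arrows. Everything else is a direct substitution into the formula of Theorem~\ref{thm AG calc}.
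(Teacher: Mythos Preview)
Your approach is essentially the same as the paper's: invoke Theorem~\ref{thm AG}(b) for part (a), and Theorem~\ref{thm AG calc} for part (b). Two small slips to fix. First, in part (b) you invoke Theorem~\ref{thm AG calc}(a) with ``the internal triangles of $T$ are the same''; that theorem counts internal triangles of $T^\dd$, not of $T$, and for an \emph{admissible} cut there are none, so both algebras simply have no $(0,3)$ pairs. Second, your Betti-number argument is off: $Q_T$ has first Betti number $1+t_0$, where $t_0$ is the number of internal triangles (not $1$), and ``preserved under deleting arrows'' is false---deleting an arrow from a cycle lowers the Betti number. The correct count is that the admissible cut removes exactly $t_0$ arrows, one from each oriented $3$-cycle, dropping the Betti number to $1$; hence $Q_{T^\dd}$ has exactly one cycle and Theorem~\ref{thm AG}(b) applies. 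The paper simply asserts this last fact without a detailed count.
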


\begin{proof}
(a) The quivers of $B_1$ and $B_2$ have exactly one cycle, since they are admissible cuts of the annulus.
Thus Theorem \ref{thm AG} implies  that $\AG(B_1)=\AG(B_2)$ if and only if $B_1$ and $B_2$ are derived
equivalent. 

(b)  Theorem \ref{thm AG calc} shows that $\AG(B_1)=\AG(B_2)$ if  the number
of local cuts on any given boundary component is the same in both admissible cuts. 
\end{proof}

\begin{remark}{In \cite{AO}, the authors associate a weight to each of the algebras appearing in Corollary \ref{cor annulus AG classes} and show that two such algebras are derived equivalent if and only if they have the same weight (respectively the same absolute weight if the marked points are equally distributed over the two boundary components).}
\end{remark}

\begin{ex}
We end this section with an example coming from an annulus.  Let $(S,M,T)$ be the triangulated surface
given in Figure~\ref{fig ex annulus} and $B$ the corresponding cluster-tilted algebra of type $\tilde\AA_4$ with the 
following quiver
\[\begin{tikzpicture}[rotate=-90]
\node[name=1] at (1,0) {1};
\node[name=2] at (2.5,0) {2};
\node[name=3] at (1.75,1)  {3};
\node[name=4] at (2.5,2) {4};
\node[name=5] at (1,2) {5};
\path[->]  (1) edge[bend left] (5) edge (3) 
	(5) edge (3)
	(3) edge (2) edge (4)
	(4) edge (5)
	(2) edge (1);
\end{tikzpicture}\]
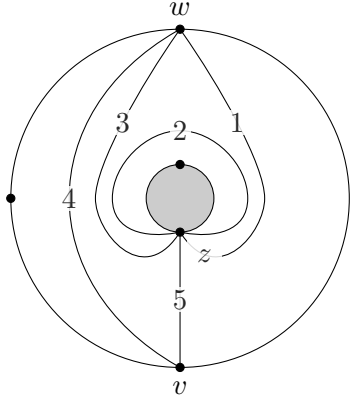
\begin{figure}
\begin{tikzpicture}[scale=.45]
\draw (0,0) circle[radius=5cm];
\filldraw[fill=black!20] (0,0) circle[radius=1cm];
\node[solid,name=a,label=above:$w$] at (90:5cm) {} ;
\node[solid,name=b] at (180:5cm) {} ;
\node[solid,name=c,label=below:$v$] at (270:5cm) {} ;
\node[solid,name=1] at (90:1cm) {} ;
\node[solid,name=2] at (270:1cm) {} ;
\draw (a) .. controls +(210:5cm) and +(150:5cm) .. (c) node[int] {4}
	 (a) ..controls +(235:2cm) and +(90:1cm) .. (180:2.5cm) node[int] {3}
	 	(180:2.5cm) ..controls +(270:1cm) and +(235:2cm) .. (2)
	 (a) ..controls +(-55:2cm) and +(90:1cm) .. (0:2.5cm) node[int] {1}
	 	(0:2.5cm) ..controls +(270:1cm) and +(-55:2cm) .. (2)
	(2) .. controls +(-10:1cm) and +(270:1cm) .. (0:2cm) 
		(0:2cm) .. controls +(90:1cm) and +(0:1cm) .. (90:2cm) node[int,pos=1] {2}
		(90:2cm) .. controls +(180:1cm) and +(90:1cm) .. (180:2cm)
		(180:2cm) .. controls +(270:1cm) and +(190:1cm) .. (2)
	(2) -- (c) node[int] {5};
\node[outer sep=.1cm,fill=white,opacity=.85,below right] at (270:1cm){$z$};
\end{tikzpicture}
\caption{Triangulated annulus}\label{fig ex annulus}
\end{figure}
Applying Theorem~\ref{thm AG calc}, we see that  $\AG(B)$ is given by the ordered pairs 
$\{ (0,3) , (0,3), (1,0), (2,1)\}$ where $(1,0)$ is associated to the inner boundary component
and $(2,1)$ the outer boundary component.  If we consider only admissible cuts, then the 
AG-invariant of the resulting algebras will be given by only two ordered pairs, one per boundary
component.  In Figures \ref{fig ex (2,2)}--\ref{fig inner} we list the bound quivers given by the different possible admissible cuts grouped
by their AG-invariant. We use dashed lines to represent the zero relations induced by the cuts. 
There are two internal triangles, hence there are nine distinct admissible cuts.  If we make sure to cut 
both boundary components, we will get five algebras having the AG-invariant given by  $\{(2,2),(3,3)\}$.  
 It follows from \cite[Section 7]{AG} and Theorem \ref{thm AG}, that these are iterated tilted algebras of type $\tilde \AA_4$. 
\begin{figure}
 \begin{tikzpicture}
 {[rotate=-90]
\node at (1.75,-1.2) {$\cut z12 \cut v45$:};
\node[name=1] at (1,0) {1};
\node[name=2] at (2.5,0) {2};
\node[name=3] at (1.75,1)  {3};
\node[name=4] at (2.5,2) {4};
\node[name=5] at (1,2) {5};
\path[->]  (1) edge[bend left] (5) edge (3) 
	(5) edge (3)
	(3) edge (2) edge (4);
\path[dashed] (1) edge (2) (4) edge (5);
}
{[rotate=-90,xshift=2cm]
\node at (1.75,-1.2) {$\cut z23 \cut v45$:};
\node[name=1] at (1,0) {1};
\node[name=2] at (2.5,0) {2};
\node[name=3] at (1.75,1)  {3};
\node[name=4] at (2.5,2) {4};
\node[name=5] at (1,2) {5};
\path[->]  (1) edge[bend left] (5) edge (3)
	(5) edge (3)
	(3) edge (4)
	(2) edge (1);
\path[dashed] (2) edge (3) (4) edge (5);
}
{[rotate=-90,xshift=0cm, yshift=5cm]
\node at (1.75,-1.2) {$\cut z23 \cut w43$:};
\node[name=1] at (1,0) {1};
\node[name=2] at (2.5,0) {2};
\node[name=3] at (1.75,1)  {3};
\node[name=4] at (2.5,2) {4};
\node[name=5] at (1,2) {5};
\path[->]  (1) edge[bend left] (5) edge (3)
	(5) edge (3)
	(2) edge (1)
	(4) edge (5);
\path[dashed] (2) edge (3) (4) edge (3);
}
{[rotate=-90,xshift=2cm,yshift=5cm]
\node at (1.75,-1.2) {$\cut z12 \cut w43$:};
\node[name=1] at (1,0) {1};
\node[name=2] at (2.5,0) {2};
\node[name=3] at (1.75,1)  {3};
\node[name=4] at (2.5,2) {4};
\node[name=5] at (1,2) {5};
\path[->]  (1) edge[bend left] (5) edge (3) 
	(5) edge (3)
	(3) edge (2)
	(4) edge (5);
\path[dashed] (2) edge (1) (4) edge (3);
}
{[rotate=-90,xshift=4cm,yshift=2.5cm]
\node at (1.75,-1.2) {$\cut w13 \cut z35$:};
\node[name=1] at (1,0) {1};
\node[name=2] at (2.5,0) {2};
\node[name=3] at (1.75,1)  {3};
\node[name=4] at (2.5,2) {4};
\node[name=5] at (1,2) {5};
\path[->]  (1) edge[bend left] (5) 
	(4) edge (5)
	(3) edge (2) edge (4)
	(2) edge (1);
\path[dashed] (1) edge (3) (3) edge (5);
}
\end{tikzpicture}
\caption{The surface algebras of $(S,M,T)$ with AG-invariant given by $\{(2,2),(3,3)\}$}\label{fig ex (2,2)}
\end{figure}
The other four admissible cuts of $(S,M,T)$ split into two AG-classes: $\{(1,0),(4,5)\}$, corresponding to 
cutting the outer boundary component twice, and  $\{(3,4),(2,1)\}$ cutting the inner boundary component
 twice. See Figure~\ref{fig outer} and Figure~\ref{fig inner}, respectively.  Note that from 
 Corollary~\ref{cor annulus AG classes}, these are also the derived equivalence classes for the surface
 algebras of type $(S,M,T)$ coming from admissible cuts.

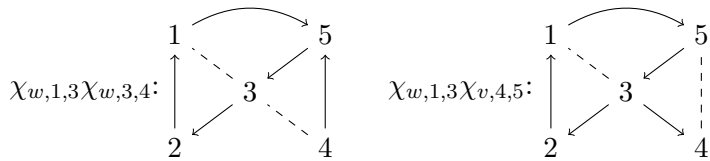
\begin{figure}
\begin{tikzpicture}
{[rotate=-90]
\node at (1.75,-1.2) {$\cut w13 \cut w34$:};
\node[name=1] at (1,0) {1};
\node[name=2] at (2.5,0) {2};
\node[name=3] at (1.75,1)  {3};
\node[name=4] at (2.5,2) {4};
\node[name=5] at (1,2) {5};
\path[->]  (1) edge[bend left] (5)
	(5) edge (3)
	(3) edge (2)
	(4) edge (5)
	(2) edge (1);
\path[dashed] (4) edge (3) (3) edge (1);
}
{[rotate=-90,yshift=5cm]
\node at (1.75,-1.2) {$\cut w13 \cut v45$:};
\node[name=1] at (1,0) {1};
\node[name=2] at (2.5,0) {2};
\node[name=3] at (1.75,1)  {3};
\node[name=4] at (2.5,2) {4};
\node[name=5] at (1,2) {5};
\path[->]  (1) edge[bend left] (5) 
	(5) edge (3)
	(3) edge (2) edge (4)
	(2) edge (1);
\path[dashed] (4) edge (5) (3) edge (1);
}
\end{tikzpicture}
\caption{The surface algebras  of $(S,M,T)$ with AG-invariant given by $\{(1,0),(4,5)\}$}\label{fig outer}
\end{figure}

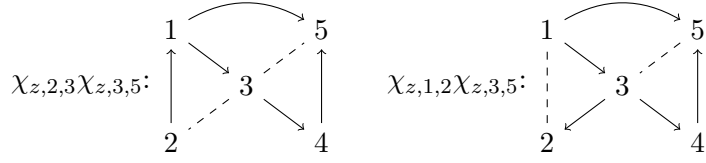
\begin{figure}
\begin{tikzpicture}
{[rotate=-90]
\node at (1.75,-1.2) {$\cut z23 \cut z35$:};
\node[name=1] at (1,0) {1};
\node[name=2] at (2.5,0) {2};
\node[name=3] at (1.75,1)  {3};
\node[name=4] at (2.5,2) {4};
\node[name=5] at (1,2) {5};
\path[->]  (1) edge[bend left] (5) edge (3)
	(3) edge (4)
	(4) edge (5)
	(2) edge (1);
\path[dashed] (3) edge (5) (3) edge (2);
}
{[rotate=-90,yshift=5cm]
\node at (1.75,-1.2) {$\cut z12 \cut z35$:};
\node[name=1] at (1,0) {1};
\node[name=2] at (2.5,0) {2};
\node[name=3] at (1.75,1)  {3};
\node[name=4] at (2.5,2) {4};
\node[name=5] at (1,2) {5};
\path[->]  (1) edge[bend left] (5) edge (3)
	(3) edge (4)
	(3) edge (2)
	(4) edge (5);
\path[dashed] (5) edge (3) (2) edge (1);
}
\end{tikzpicture}
\caption{The surface algebras with of $(S,M,T)$ AG-invariant given by $\{(3,4),(2,1)\}$}\label{fig inner}
\end{figure}
\end{ex}


\section{A geometric description of the module categories of surface algebras}\label{sect 5}
In this section, we study the module categories of surface algebras.
Let $(S,M,T)$ be a triangulated surface without punctures and let $(S,M\dag,T\dag)$ be a cut 
of $(S,M,T)$ with corresponding surface algebra $B_{T\dag}$. We will associate a category
 $\cale \dag$ to $(S,M\dag,T\dag)$ whose objects are given in terms of curves on the surface 
 $(S,M\dag)$, and we will show that there is a functor from $\cale\dag$ to $\cmod B_{T\dag}$  
 is faithful and dense in the category of string modules over $B_{T^\dd}$.  Consequently, the string modules over the surface algebra $B_{T\dag}$  
 have a geometric description in terms of certain curves on $(S,M\dag)$.

In the special case where $S$ is a disc, or more generally if $B_{T^\dd}$ is representation finite, we even have that $\cale\dag$ and $B_{T\dag}$ are equivalent categories.  
We will use this fact in Section~6 to give a geometric description of tilted algebras of type $\AA$ and 
their module categories. 

\subsection{Permissible arcs}

\begin{dfn}\mbox{}
\begin{enuma}
\item A curve $\gamma$ in $(S,M\dag,T\dag)$  is said to \df{consecutively cross} $\tau_1,\tau_2\in T\dag$
if $\gamma$ crosses $\tau_1$ and $\tau_2$ in the points $p_1$ and $p_2$, and the segment of $\gamma$ 
between the points $p_1$ and $p_2$ does not cross any other arc in $T\dag$.  
\item  A generalized arc or closed curve in $(S,M\dag)$ is called \df{permissible} if it is not consecutively 
crossing two non-adjacent sides of a \quasitri\ in $T\dag$. 
\item Two permissible generalized arcs $\gamma,\gamma'$ in $(S,M\dag)$ are called \df{equivalent} if
there is a side $\delta$ of a \quasitri\ $\triangle^\dd$ in $T\dag$ such that 
	\begin{enumi}
		\item $\gamma$ is homotopic to the concatenation of $\gamma'$ and $\delta$,
		\item both $\gamma$ and $\gamma'$ start at  an endpoint of $\delta$ and their first 
		crossing with $T\dag$ is with the side of $\triangle^\dd$ that is opposite to $\delta$.
	\end{enumi}
\end{enuma}
Examples of a non-permissible arc and of equivalent arcs are given in Figure~\ref{fig arcs ex}.
\end{dfn}

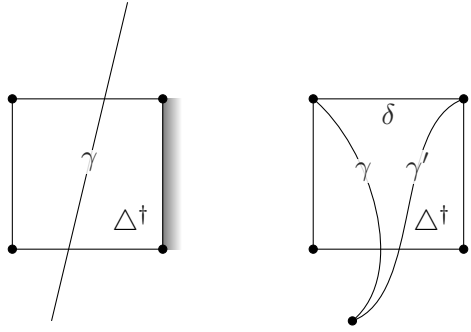
\begin{figure}
\begin{tikzpicture}
{[]
	\draw (0,0) node[pos=0,solid]  {}
		-- (2,0) node[pos=1,solid] {}
		-- (2,-2) node[pos=1,solid,label=135:$\triangle\dag$] {}
		-- (0,-2) node[pos=1,solid] {}
		-- cycle;
	{[on background layer] \shade[shading=axis,shading angle=90] (2,0)	rectangle (2.25,-2);}
	\draw (40:2cm) -- (-80:3cm) node[int] {$\gamma$};
}
{[xshift=4cm] 
	\draw (0,0) node[pos=0,solid]  {}
		-- (2,0) node[pos=1,solid] {} node[int,below,outer sep=1] {$\delta$}
		-- (2,-2) node[pos=1,solid,label=135:$\triangle\dag$] {}
		-- (0,-2) node[pos=1,solid] {}
		-- cycle;
	\draw (0,0) ..controls +(-40:1cm) and +(40:1cm) ..  (-80:3cm) node[int,pos=.35] {$\gamma$}
		   (2,0) ..controls +(200:1cm) and +(20:1cm) .. (-80:3cm) node[int,pos=.35] {$\gamma'$} 
		   node[pos=1,solid] {};
}
\end{tikzpicture}
\caption{Example of a non-permissible arc $\gamma$ on the left, and of two equivalent arcs 
$\gamma,\gamma'$ on the right}\label{fig arcs ex}
\end{figure}

Note that the arcs in $T\dag$ are permissible by definition.  Furthermore, note that the side $\delta$
 in part (c) of the definition may be, but does not have to be, a boundary segment.

\subsection{Pivots}
\begin{dfn} \label{def pivot}
Let $\gamma$ be a permissible generalized arc, and let $v$ and $w$ be its
endpoints. Let $f_v\gamma$ be the permissible generalized arc or boundary
segment obtained from $\gamma$ by fixing the endpoint $v$ and moving the
endpoint $w$ as follows:
\begin{enumi}
\item if $w$ is a vertex of a \quasitri\ $\triangle^\dd$ and $w'$ is the
	counterclockwise neighbor vertex of $w$ in $\triangle^\dd$ such that $\gamma$ is
	equivalent to an arc $\gamma'$ with endpoints $v,w'$ then move $w$ to the
	counterclockwise neighbor of $w'$ on the boundary of $S$.
	Thus $f_v\gamma$ is given by a curve homotopic to the concatenation of
	$\gamma'$ and the boundary segment that connects $w'$ to its counterclockwise
	neighbor. 
%
\item otherwise, move $w$ to its counterclockwise neighbor on the
	boundary. Thus $f_v\gamma$ is given by a curve homotopic to the concatenation
	of $\gamma$ and the boundary segment that connects $w$ to its counterclockwise
neighbor.
\end{enumi}
We call $f_v\gamma$ the \df{pivot of $\gamma$ fixing $v$}. 
\end{dfn}

\begin{remark}
In the special case of a trivial cut, that is $(S,M\dag,T\dag)=(S,M,T)$, the pivots are defined  
by condition (ii) only, since there are no \quasitri{s}.  In this case our pivots are the same as in \cite{BZ}, 
where it has been show that these pivots correspond to irreducible morphisms in $\cmod B_T$.
\end{remark}

\begin{dfn}
Let $\gamma$ be a permissible generalized arc in $(S,M\dag,T\dag)$  and let $v$ and $w$ be 
its endpoints.  Let $\tau^-\gamma$ be the permissible generalized arc given by 
$\tau^- \gamma = f_vf_w\gamma$. Dually, let $\tau^+\gamma$ be the permissible generalized 
arc in  $(S,M\dag,T\dag)$  such that $\tau^-(\tau^+ \gamma)=\gamma$. 
\end{dfn}
\begin{remark}
Again, in the case where $(S,M\dag,T\dag)=(S,M,T)$, it has been show in \cite{BZ} that 
$\tau^+$ and $\tau^-$ correspond to the Auslander-Reiten translation and the inverse
Auslander-Reiten translation in $\cmod B_T$, respectively.
\end{remark}
\subsection{The categories $\cale^\dd$ and $\cale$}
\begin{dfn}
Let $\cale\dag = \cale(S,M\dag,T\dag)$ be the additive category whose indecomposable
objects are the equivalence classes of permissible arcs in $(S,M\dag,T\dag)$ that are not
in $T\dag$ and whose morphisms between indecomposable objects are generated by the pivots
$f_v\in \Hom_{\cale\dag}(\gamma,f_v\gamma)$ subject to the relations $f_vf_w = f_wf_v$.
Here we use the convention that $f_v\gamma$ denotes the zero object in $\cale\dag$
whenever $f_v\gamma$ is a boundary segment in $(S, M^\dd)$ or an arc in $T\dag$.
\end{dfn}

Our first goal is to relate the category $\cale\dag =\cale(S,M\dag,T\dag)$ to the category
$\cale = \cale(S,M,T)$ of the original triangulated surface, by constructing a functor
$G\colon \cale\dag \to \cale$. Let $\gamma$ be an indecomposable object in $\cale\dag$;
thus $\gamma$ is represented by a permissible generalized arc in $(S,M\dag)$. The
generalized arc $\gamma$ is determined by the sequence of its crossing points with $T\dag$
together with the sequence of its segments between consecutive crossing points. Each of 
these segments lies entirely in a triangle or \quasitri\ of $T\dag$. Define $G(\gamma)$ to 
be the unique generalized arc in $(S,M)$ determined by the same sequence of crossing 
points and segments, with the difference that segments of $\gamma$ that were lying 
on a \quasitri\ become segments of $G(\gamma)$ which lie in the corresponding triangle.

If $f_v\colon \gamma \to f_v\gamma$ is a morphism in $\cale\dag$ given by  a pivot of type (ii) 
in Definition~\ref{def pivot}, then let $G(f_v)\colon G(\gamma) \to G(f_v\gamma)$ be the pivot 
in $\cale$ fixing the marked point corresponding to $v$ in $M$.  If $f_v\colon \gamma \to f_v\gamma$ 
is given by a pivot of type (i) in Definition~\ref{def pivot}, then let  
$G(f_v)\colon G(\gamma) \to G(f_v\gamma)$ be the minimal sequence of pivots in $\cale$, each pivot 
fixing the marked point corresponding to $v$ in $M$, which transforms $G(\gamma)$ into 
$G(f_v\gamma)$. 
\begin{prop}\label{prop:G full faithful}
$G$ is a full and faithful functor.
\end{prop}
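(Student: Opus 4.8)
The plan is to verify faithfulness and fullness separately, in each case reducing to a purely combinatorial comparison between pivot sequences in $\cale^\dd$ and pivot sequences in $\cale$. The key observation is that the functor $G$ is essentially a ``straightening'' map: it replaces each quasi-triangle $\triangle^\dd_i$ of $T^\dd$ by the corresponding internal triangle $\triangle_i$ of $T$, recording a permissible generalized arc by its sequence of crossing points and segments. Since a permissible arc by definition never consecutively crosses two non-adjacent sides of a quasi-triangle, and since the three sides $\alpha^\dd_i,\beta^\dd_i,\gamma_i$ of $\triangle^\dd_i$ become the three sides $\alpha_i,\beta_i,\gamma_i$ of $\triangle_i$, the crossing sequence of $G(\gamma)$ with $T$ is obtained from that of $\gamma$ with $T^\dd$ by deleting no crossings at all — only the local shape of the segments changes. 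Hence $G$ is well-defined on objects, and two equivalent permissible arcs $\gamma,\gamma'$ in $(S,M^\dd,T^\dd)$ (differing by sliding along a side $\delta$ of a quasi-triangle) are sent to the same arc in $(S,M,T)$, because in $T$ the side $\delta$ no longer bounds a quasi-triangle but rather is an interior side whose two neighboring triangles meet along it; one checks this on the three local pictures of Figure~\ref{fig cut}.

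First I would establish \emph{injectivity of $G$ on isomorphism classes of indecomposables}, or more precisely, that $G$ reflects the relevant combinatorial data: given the crossing sequence of $G(\gamma)$ with $T$, one can reconstruct the crossing sequence of $\gamma$ with $T^\dd$ together with its equivalence class, because the only ambiguity is how $\gamma$ passes through the quasi-triangles, and permissibility plus the equivalence relation of part (c) of the permissibility definition pins this down uniquely up to equivalence. This is the content needed to show that $G$ is faithful on each $\Hom$ space of indecomposables: a morphism $\gamma\to\gamma'$ in $\cale^\dd$ is a polynomial in the commuting pivots $f_v$, equivalently a path in the ``pivot quiver'' modulo commutativity relations, and $G$ sends such a path to the corresponding path of pivots in $\cale$. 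Because $G$ on objects is injective (up to equivalence) and $G$ sends the generator $f_v$ of type (ii) to a single pivot and the generator $f_v$ of type (i) to a \emph{nonzero} sequence of pivots (the arc $\gamma$ crosses the side of $\triangle^\dd$ opposite $\delta$, so after straightening, the minimal sequence of pivots past the triangle $\triangle_i$ is nontrivial and does not collapse to zero), no nontrivial linear combination of paths in $\cale^\dd$ can map to zero; hence $G$ is faithful.

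For \emph{fullness}, I would argue that every morphism in $\cale$ between objects of the form $G(\gamma), G(\gamma')$ lifts. A morphism in $\cale$ is again a polynomial in commuting pivots $f_{\bar v}$ fixing marked points $\bar v\in M$; each such pivot moves an endpoint of an arc in $(S,M,T)$ to its counterclockwise boundary neighbor. Lifting reduces to the case of a single pivot $f_{\bar v}\colon G(\gamma)\to G(\gamma')$, and here the relative position of the quasi-triangles on the boundary of $S$ determines whether the lift is a type-(ii) pivot (if the endpoint move does not sweep past a quasi-triangle side) or a type-(i) pivot (if it does); in either case the definition of $f_v$ in Definition~\ref{def pivot} was arranged precisely so that $G(f_v)$ equals the given $f_{\bar v}$ or the given sequence of pivots. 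Summing over boundary components and composing, every morphism in $\Hom_\cale(G(\gamma),G(\gamma'))$ is in the image of $G$.

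The main obstacle I anticipate is the bookkeeping around the type-(i) pivots: one must check carefully that when $\gamma$ has an endpoint $w$ at a vertex of a quasi-triangle, the prescription ``move $w$ past $w'$ to the counterclockwise neighbor of $w'$'' in $\cale^\dd$ corresponds, after applying $G$, to exactly the minimal sequence of ordinary pivots in $\cale$ needed to drag the endpoint of $G(\gamma)$ around the internal triangle $\triangle_i$, and that this sequence is consistent with the commutativity relations $f_vf_w=f_wf_v$ on both sides. Verifying this on the three local configurations of Figure~\ref{fig cut def} and Figure~\ref{fig ei}, together with confirming that no pivot in these sequences produces the zero object prematurely (which would break injectivity of the induced map on $\Hom$), is the delicate part; everything else is a routine unwinding of the definitions.
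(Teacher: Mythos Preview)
Your faithfulness argument is essentially in line with the paper's, though the paper phrases it more directly: it suffices to check that each generating pivot maps to a nonzero morphism, and for a type-(i) pivot one simply replaces $\gamma$ by the equivalent arc $\gamma'$ so that the pivot becomes type (ii), whose image is manifestly a single nonzero pivot in $\cale$.

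Your fullness argument, however, has a real gap. You write that ``lifting reduces to the case of a single pivot $f_{\bar v}\colon G(\gamma)\to G(\gamma')$'', but this reduction is exactly the hard part and is not valid as stated. A nonzero morphism $f\colon G(\gamma)\to G(\gamma')$ in $\cale$ is a composition of pivots $f_{v_i}\circ\cdots\circ f_{v_1}$, and the \emph{intermediate} arcs $f_{v_j}\circ\cdots\circ f_{v_1}(G(\gamma))$ need not lie in the image of $G$: such an arc may consecutively cross two sides of an internal triangle $\triangle$ that become opposite sides of a quasi-triangle $\triangle^\dd$, so its lift to $(S,M^\dd,T^\dd)$ is non-permissible and represents no object of $\cale^\dd$. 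Hence you cannot lift pivot-by-pivot.

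The paper deals with this as follows. After reordering the pivots via the commutativity relations into two blocks (first all pivots fixing one endpoint, then all pivots fixing the other), let $\alpha$ be the first intermediate arc not in the image of $G$, let $\delta$ be its predecessor, and let $\beta$ be the first subsequent arc back in the image of $G$. If the run of pivots from $\delta$ to $\beta$ all fix the same endpoint, then the whole run is the image of a single type-(i) pivot $f_{v^\dd}$ in $\cale^\dd$, and one finishes by induction on the number of non-permissible intermediate arcs. If instead the endpoint being fixed switches during this run, then the triangle $\triangle$ separates $G(\gamma)$ from $\beta$ in the sense that no arc of $T$ crosses both; this forces $\Hom_\cale(G(\gamma),\beta)=0$, so $f=0$ and there is nothing to lift. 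This second case --- where the morphism is forced to vanish --- is entirely absent from your sketch, and without it the argument does not close.
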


\begin{proof}
The image of a composition of pivots under $G$ is equal to the composition of the images of the pivots.
Moreover, if $v\dag,w\dag$ are marked points in $M\dag$ and $v,w$ are the corresponding points in $M$, 
then $G(f_{v\dag})\circ G(f_{w\dag})(G(\gamma) )= G(f_{w\dag})\circ G(f_{v\dag})(G(\gamma))$, which shows that $G$ respects the 
relations on $\cale\dag$.

To show that $G$ is full, let $\gamma,\gamma'$ be indecomposable objects in $\cale\dag$ and let $f\in
\Hom_\cale(G(\gamma),G(\gamma'))$ be a nonzero morphism in $\cale$. Then $f$ is given by a sequence of
pivots $f=f_{v_i}\circ f_{v_{i-1}} \circ \cdots\circ f_{v_1}$ with $v_1,\cdots,v_i\in M$. Using the relations $f_vf_w =f_wf_v$
in $\cale$, we may suppose without loss of generality that 
$v_1=v_2=\cdots=v_h\neq v_{h+1} = \cdots = v_{i-1} = v_i$, for some $h$ with $1\leq h\leq i$. If all 
intermediate generalized arcs $f_{v_j}\circ f_{v_{j-1}} \circ \cdots\circ f_{v_1}(G(\gamma))$ are in the 
image of $G$, then there is a corresponding sequence of pivots 
$f_{v_i\dag}\circ f_{v_{i-1}\dag} \circ \cdots\circ f_{v_1\dag}$ in $\cale\dag$ such that 
$G(f_{v_i\dag}\circ f_{v_{i-1}\dag} \circ \cdots\circ f_{v_1\dag} (\gamma))
	= f_{v_i}\circ f_{v_{i-1}} \circ \cdots\circ f_{v_1}(G(\gamma))$. 
Otherwise, let $\alpha = f_{v_j}\circ f_{v_{j-1}} \circ \cdots\circ f_{v_1}(G(\gamma))$ be the first generalized 
arc in this  sequence that does not lie in the image of $G$. Then $\alpha$ must cross an internal triangle 
$\triangle$ in $T$ such that the corresponding (non-permissible) generalized arc $\za^\dd$ in $(S,M\dag,T\dag)$ 
crosses the corresponding \quasitri\ in two opposite sides consecutively. Let 
$\delta=f_{v_{j-1}} \circ f_{v_{j-2}} \circ \cdots \circ f_{v_1}(G(\gamma))$  be the immediate predecessor of 
$\alpha$ in the sequence, and let $\beta = f_{v_k} \circ f_{v_{k-1}} \circ \cdots \circ f_{v_1}(G(\gamma))$ 
with $k>j$ be the first arc after $\alpha$ in the sequence which is again in the image of $G$. We distinguish 
two cases.
\begin{enum}
\item Suppose that $v_j = v_{j+1} = \cdots = v_k$.  Thus $\beta$ and $\alpha$ are both incident to $v_j$,
	and it follows  that in  $\cale\dag$, we have $f_{v_j\dag}\delta\dag = \beta\dag$, where 
	$\delta\dag = f_{v_{j-1}\dag}\circ f_{v_{j-2}\dag}\circ \cdots \circ f_{v_1\dag}(\gamma)$ in $\cale\dag$ 
	is the object corresponding to $\delta$, and $\beta\dag$ the object in $\cale\dag$ corresponding 
	to $\beta$.  Clearly, 
	\[G( f_{v_j\dag}\circ f_{v_{j-1}\dag}\circ \cdots \circ f_{v_1\dag}(\gamma)) = G(\beta\dag) = \beta.\]
	By induction on the number of generalized arcs in the sequence  
	$f_{v_i}\circ f_{v_{i-1}} \circ \cdots\circ f_{v_1}(G(\gamma))$ which are not in the image of $G$, it 
	follows now that the morphism $f$ is in the image of $G$.  
\item Suppose that  $v_j\neq v_k$.  Then the triangle $\triangle$ separates the two arcs $G(\gamma)$ 
	and $\beta$,  which implies that none of the arcs in $T$ can cross both $G(\gamma)$ and 
	$\beta$.  Therefore $\Hom_\cale(G(\gamma),\beta) = 0$, hence $f=0$.
\end{enum}
This shows that $G$ is full. 

To prove that $G$ is faithful, it suffices to show that $G$ is faithful on pivots $\gamma\mapsto f_v\gamma$.  
For pivots of type (ii) in Definition~\ref{def pivot}, this is clear, so let $\gamma\mapsto f_v\gamma$ be 
a pivot of type (i).  Recall that such a pivot occurs if $\gamma$ is a permissible generalized arc in 
$(S,M\dag, T\dag)$ with endpoint $v,w\in M\dag$, where $w$ is a vertex of a \quasitri\ $\triangle^\dd$ in 
$T\dag$ such that  $\gamma$ is equivalent to a permissible generalized arc $\gamma'$ with endpoints 
$v,w'$, where $w'$  is the counterclockwise neighbor of $w$ in $\triangle^\dd$.  Since $\gamma$ and 
$\gamma'$ are equivalent, they define the same object of $\cale\dag$, so the morphism 
$\gamma\mapsto f_v\gamma$ can be represented by the pivot  $\gamma'\mapsto f_v\gamma'$ which 
is of type (ii).  This shows that the image of this morphism under $G$ is non-zero, hence $G$ is faithful.
\end{proof}

\subsection{Geometric description of module categories}
It has been shown in \cite{BZ} that there exists a faithful functor $F\colon \cale \to \cmod B_T$ which sends 
pivots to irreducible morphisms and $\tau^-$ to the inverse Auslander-Reiten translation in $\cmod B_T$. 
This functor induces a dense and faithful as  functor from $\cale$ to the category of $B_T$-string modules.

The composition $F\circ G$ is a faithful functor from $\cale\dag$ to $\cmod B_{T\dag}$.  We will now define 
another functor $\res \colon \cmod B_T\to \cmod B_{T\dag}$ and get a functor 
$H=\res\circ F \circ G\colon \cale\dag \to \cmod B_{T\dag}$ completing the following commutative
diagram.
\[\begin{tikzpicture}[scale=1.75]
\node[name=lt]  at (0,0) {$\cale\dag$};
\node[name=rt] at (2,0) {$\cale$};
\node[name=lb] at (0,-1) {$\cmod B_{T\dag}$};
\node[name=rb] at (2,-1) {$\cmod B_T$};
\path[->] 
	(lt) edge node[auto] {$H$} (lb) edge node[auto] {$G$} (rt)
	(rt) edge node[auto] {$F$} (rb)
	(rb) edge node[auto] {$\res$} (lb);
\end{tikzpicture}\]

Note that the inclusion map $i\colon B_{T\dag} \to B_T$ is an algebra homomorphism which sends 
$1_{B_{T\dag}}$ to $1_{B_T}$.  Thus $B_{T\dag}$ is a subalgebra of $B_T$, and we can define $\res$ to be the 
functor given by the restriction of scalars.

On the other hand, $B_{T\dag}$ is the quotient of $B_T$ by the two-sided ideal generated by the arrows in 
the cut.  Denoting  the projection homomorphism by $\pi\colon B_T \to B_{T\dag}$, we see that $\pi\circ i$ 
is the identity morphism on $B_{T\dag}$.  The extension of scalars functor 
$\iota = - \otimes_{B_{T\dag}} B_T\colon \cmod B_{T\dag} \to \cmod B_T$ is sending  $B_{T\dag}$-modules 
$M$ to $B_T$-modules $\iota(M)=M$ with the arrows in the cut acting trivially; and $\iota$ is the identity
on morphisms.  We have $\res\circ \iota = 1_{B_{T\dag}}$.

\begin{thm}\label{thm cats}Let $H=\res\circ F\circ G \colon \cale\dag \to \cmod B_{T\dag}$.
\begin{enuma}
\item The functor $H$ is faithful and induces a dense, faithful functor from $\cale\dag$ to
the category of string modules over $B_{T\dag}$.
\item If $f_v\colon \gamma \to f_v\gamma$ is a pivot in $\cale\dag$ then 
	$H(f_v):H(\gamma)\to H(f_v\gamma)$ is an irreducible morphism in $\cmod B_{T\dag}$.
\item The inverse Auslander-Reiten translate of $H(\gamma)$ is $H(\tau^-\gamma)$, and the Auslander-Reiten translate of $H(\gamma)$ is $H(\tau^+\gamma)$.
\item If $\gamma\in T\dag$ then $H(\tau^- \gamma)$ is an indecomposable projective 
	$B_{T\dag}$-module and $H(\tau^+\gamma)$ is an indecomposable injective $B_{T\dag}$-module.
\item If the surface $S$ is a disc, then $H$ is an equivalence of categories.
\item If the algebra $B_{T\dag}$ is of finite representation type, then $H$ is an equivalence of categories.
\end{enuma}
\end{thm}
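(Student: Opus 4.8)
The plan is to establish the six parts essentially in the order stated, building on the machinery of \cite{BZ} for the uncut surface and transporting it along the functors $G$ and $\res$. For part (a), I would start from the known fact (\cite{BZ}) that $F\colon \cale \to \cmod B_T$ is faithful, sends pivots to irreducible morphisms, and induces a dense faithful functor onto the string modules over $B_T$. Combined with Proposition \ref{prop:G full faithful}, which gives that $G$ is full and faithful, the composition $F\circ G$ is faithful. To pass to $H = \res\circ F\circ G$, the key observation is that the string modules over $B_T$ that are annihilated by the cut arrows correspond precisely, via restriction of scalars, to the string modules over $B_{T^\dd}$; and an indecomposable $B_T$-string module $M(w)$ restricts to a nonzero $B_{T^\dd}$-module iff the string $w$ avoids the cut arrows, which geometrically means the underlying generalized arc in $(S,M)$ lifts to a permissible generalized arc in $(S,M^\dd,T^\dd)$ (the non-permissibility condition — consecutively crossing two opposite sides of a quasi-triangle — is exactly the geometric avatar of a walk using a cut arrow). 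Faithfulness of $H$ then follows because $\res$ is faithful on the relevant modules (it is the identity on underlying spaces and on morphisms) and because the possible collapsing of a pivot to zero is already accounted for by the zero-object convention in $\cale^\dd$. Density onto the string modules of $B_{T^\dd}$ follows since every $B_{T^\dd}$-string module, viewed through $\iota$ as a $B_T$-string module with the cut arrows acting by zero, is hit by $F\circ G$ up to the identification $\res\circ\iota = 1$.

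For part (b), the argument is that $F$ sends pivots to irreducible morphisms in $\cmod B_T$ between string modules, and irreducibility is detected inside the category of string modules; since restriction of scalars identifies the relevant full subcategories, $\res$ preserves this irreducibility, and the definition of $G$ on pivots of type (i) (as the minimal sequence of $\cale$-pivots realizing the transformation) is calibrated so that its image is again a single irreducible map after restriction — the intermediate $B_T$-modules in that sequence are precisely the ones killed by the cut and become identified under $\res$. Part (c) is handled by the same transport: in $\cale$ one has $\tau^- = f_v f_w$ corresponding to $\tau^{-1}$ on $\cmod B_T$ (\cite{BZ}), and the definitions of $\tau^\pm$ on $\cale^\dd$ are made so that $G$ intertwines them with $\tau^\pm$ on $\cale$ wherever both sides are defined; one checks that $\res$ commutes with Auslander--Reiten translations on the string-module subcategories, the point being that the almost split sequences ending in a string module that is annihilated by the cut also have all terms annihilated by the cut (a combinatorial check on the Butler--Ringel description of irreducible maps), so they restrict to almost split sequences over $B_{T^\dd}$. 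Part (d) is then immediate: an arc $\gamma\in T^\dd$, under the conventions, has $\tau^-\gamma$ corresponding to a string module which in $\cmod B_T$ is $\tau^{-1}$ of a module associated to an arc of the triangulation; over $B_T$ these are projective-injective-adjacent, and after restriction the relevant one is the indecomposable projective at the vertex $\gamma$, dually for $\tau^+\gamma$ and injectives — this can also be seen directly from the quiver $Q_{T^\dd}$ together with Lemmas \ref{lem permitted to v} and \ref{lem forbidden to tri}.

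For parts (e) and (f), the goal is to upgrade ``faithful and dense onto string modules'' to ``equivalence,'' which requires showing (i) $B_{T^\dd}$ has no band modules, so that the string modules are all the indecomposables, and (ii) $H$ is full, not merely faithful. For (e), when $S$ is a disc the quiver $Q_{T^\dd}$ is a tree (admissible or partial cuts of a type $\AA$ cluster-tilted quiver), so $B_{T^\dd}$ is representation finite with no bands, reducing (e) to (f). For (f), assuming $B_{T^\dd}$ representation finite: absence of bands is automatic, so $H$ is already dense; it remains to prove fullness, and since $H$ is a functor between additive Krull--Schmidt categories with the same (finite) set of indecomposables matched bijectively and all irreducible morphisms in the image, one concludes fullness from the fact that $\cmod B_{T^\dd}$ is generated by its irreducible morphisms (the radical of the module category is nilpotent in the representation-finite case) together with the faithfulness already established — a counting/dimension argument comparing $\Hom$-spaces, or equivalently an induction on the length of paths in the Auslander--Reiten quiver, closes the gap. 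I expect the main obstacle to be precisely this last fullness step in (f): one must rule out ``extra'' morphisms in $\cmod B_{T^\dd}$ not coming from compositions of pivots, and the cleanest route is probably to show that $H$ induces an isomorphism on the Auslander--Reiten quivers (which parts (b)--(d) already nearly give) and then invoke that a representation-finite algebra is determined up to the module category by its AR quiver with the mesh relations — checking that $H$ respects mesh relations is the technical heart.
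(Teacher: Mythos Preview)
Your outline for (a), (d), (e), (f) matches the paper. The reduction of (e) to (f) is legitimate, though the paper actually proves (e) separately by using that $F$ is an equivalence for the disc (from \cite{CCS}) and then checking fullness of $H$ directly via $\iota$ and $\res$; for (f) the paper's argument is essentially yours (every morphism is a finite composition of irreducibles since the Auslander--Reiten quiver is finite and connected, and irreducibles are in the image by (b)).

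The genuine gap is in (b). Your claim that for a type~(i) pivot the intermediate $B_T$-modules ``become identified under $\res$'' is not correct: restriction of scalars along $B_{T^\dd}\hookrightarrow B_T$ is the identity on underlying vector spaces and on linear maps, so it neither kills nor identifies modules. If an intermediate string module $M(w)$ has a cut arrow appearing in $w$, then $\res M(w)$ decomposes over $B_{T^\dd}$ as $M(w_1)\oplus M(w_2)$ but is still nonzero, and $H(f_v)$ genuinely factors through it; nothing formal forces the composite to be irreducible in $\cmod B_{T^\dd}$. The paper does not attempt to push irreducibility through $\res\circ F\circ G$ in this case. Instead it works directly in $\cmod B_{T^\dd}$ using the Butler--Ringel description of irreducible maps between string modules via hooks and cohooks \cite{BR}: in the problematic case (where the single $\cale$-pivot $f_v$ of $G(\gamma^\dd)$ yields a non-permissible arc because it consecutively crosses two opposite sides of a quasi-triangle), one checks geometrically that the string of $H(f_{v^\dd}\gamma^\dd)$ is obtained from the string of $H(\gamma^\dd)$ by appending exactly one hook, namely the one coming from the complete fan at the new endpoint $w''$. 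That is an irreducible morphism by \cite{BR}, independently of what happens in $\cmod B_T$.

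Part (c) inherits the same problem. Your assertion that an almost split sequence in $\cmod B_T$ ending at a module annihilated by the cut arrows has all terms annihilated by the cut is false in general (the middle term can involve a string containing a cut arrow), so $\res$ does not commute with $\tau^{-1}$ in the way you suggest. The paper bypasses this by deducing (c) from (b): since $\tau^{-1}$ of a string module is computed by performing the hook/cohook operation at both ends of the string, and (b) identifies each such operation with a pivot $f_{v^\dd}$ or $f_{w^\dd}$ in $\cale^\dd$, the identity $\tau^{-1}H(\gamma)=H(f_{v^\dd}f_{w^\dd}\gamma)=H(\tau^-\gamma)$ follows at once.
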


\begin{proof}
Part (a).  The restriction of scalars functor is a faithful functor.  The fact that $F$ is faithful has been 
shown in \cite{BZ}, and $G$ is faithful by Proposition~\ref{prop:G full faithful}. Thus $H$ is faithful.

It also follows from \cite{BZ} that $B_T$-modules in the image of $F$ are string modules and that $F$ is 
a dense and faithful functor from $\cale$ to the category of string $B_T$-modules.  Consequently, the 
$B_{T\dag}$-modules in the image of $H$ are string modules.  Now let $M$ be any indecomposable string
module over $B_{T\dag}$, then $\iota(M)$ is a string module in $\cmod B_T$, and hence there exists
an indecomposable $\gamma\in \cale$ such that $F(\gamma)\cong\iota(M)$. Since the arrows in the 
cut act trivially on $\iota(M)$, the generalized arc $\gamma$ in $(S,M,T)$ lifts to a permissible 
generalized arc  $\gamma\dag$ in $(S,M\dag,T\dag)$.  Moreover 
$H(\gamma\dag) = \res\circ F(\gamma) \cong M$. Thus $H$ is dense to the category of string 
$B_{T\dag}$-modules.

Part (b). 
It has been shown in \cite{BR} that the irreducible morphisms starting at an indecomposable string module $M(w)$ can be described by adding hooks or deleting cohooks at the endpoints of  the string $w$. If $w$ is a string such that there exists an arrow $a$ such that $aw$ is a string, then let $v_a$ be the maximal non-zero path starting at $s(a)$ whose initial arrow is different from $a$.  Then there is an irreducible morphism $M(w)\to M(w_h)$, where $w_h=v_a^{-1}aw$ is obtained from $w$ by adding the ``hook" $v_a^{-1}a$ at the starting point of $w$. 
On the other hand, if $w$ is a string such that there is no arrow $a$ such that $aw $ is a string, then there are two possibilities: 
\begin{enumerate}
\item either $w$ contains no inverse arrow, in which case there is no irreducible morphism corresponding to the starting point of $w$,
\item or $w$ is of the form $w=u_a a^{-1} w_c$, where $a$ is an arrow and $u_a$ is maximal nonzero path ending at $t(a)$. In this case, there is an irreducible morphism $M(w)\to M(w_c)$ and $w_c$ is said to be obtained by deleting the ``cohook" $u_a a^{-1} $ at the starting point of the string $w$. 
\end{enumerate}
In a similar way, there are irreducible morphisms associated to adding a hook or deleting a cohook at the  terminal point of the string $w$.

The inverse Auslander-Reiten translation of the string module $M(w)$ corresponds to the string obtained from $w$ by performing the two operations of adding a hook, respectively deleting a cohook, at both endpoints of the string $w$.

Br\"ustle and Zhang have shown that for the Jacobian algebra $B_T$ of a triangulated unpunctured surface $(S,M,T)$, the irreducible morphisms between indecomposable string modules are precisely given by the pivots of the generalized arcs. Adapting their construction to cuts, we show now that the same is true for surface algebras. 

Let $\gamma^\dd$ be a generalized permissible arc in $(S,M^\dd,T^\dd)$ which is not in $T^\dd$, let $v^\dd$ and $w^\dd$ be the endpoints of $\gamma^\dd$ and consider the pivot $f_{v^\dd}\gamma^\dd$.
Applying the functor $H$, we obtain a homomorphism between $B_{T^\dd}$-string modules 
\[H(f_{v^\dd})\colon H(\gamma^\dd)\longrightarrow H (f_{v^\dd} \gamma^\dd),\]
and we must show that this morphism is irreducible.
Clearly, if the image under the extension of scalars functor $ \iota H (f_{v^\dd})$ is an irreducible morphism in $\cmod B_T$, then $H (f_{v^\dd} )$ is an irreducible morphism in $\cmod B_{T^\dd}$. By \cite{BZ}, this is precisely the case when $\iota H (f_{v^\dd} \gamma^\dd) = F( f_v\zg)$, where $f_v$ is the pivot in $(S,M,T)$ at the vertex $v$ corresponding to the vertex $v^\dd$, and $\zg=G(\gamma^\dd)$.

Thus we must show the result in the case where 
$\iota H (f_{v^\dd} \gamma^\dd) \ne F( f_v\zg)$. In this case, we have that $f_v\zg$ consecutively crosses two sides $ \alpha , \beta$ of a triangle $\triangle$ in $T$, which corresponds to a quasi-triangle $\triangle^\dd$ in $T^\dd$  in which the sides $\alpha, \beta$ give rise to two opposite sides $\alpha^\dd,\beta^\dd$, and $w$ is a common endpoint of $\alpha $ and $\beta$, which gives rise to  two points $w'$ and $w''$ in $\triangle^\dd$, see Figure \ref{fig irredmorph}.
\begin{figure}
\begin{tikzpicture}
{[]
  \node[solid,name=a] at (0,0) {};
  \node[solid,name=b] at (-2,-2) {};
  \node[solid,name=w,label=-20:$w$] at (2,-2) {};
  \shade[shading=axis,shading angle=90] (2,-2) rectangle (2.2,.5);
  \draw (a) -- (b)  
  	        -- (w) node[pos=.35,above] {$\triangle$}  node[int,pos=.2] {$\beta$}
  	        -- (a) node[int,pos=.8] {$\alpha$}
	   (2,-2) -- ++(205:3.5cm)
	   	   -- ++(0:4cm)
	   (2,-2) -- +(190:3cm)
  	   (2,-2) -- +(225:2.5cm) node[solid,name=v,label=-20:$v$] {} node[int,pos=.33] {$\gamma$}
  	   (2,-2) -- +(90:2.5cm) node[solid,name=u,label=45:$u$] {};
{[on background layer] \draw (v) -- (u) node[int,pos=.75] {$f_v\gamma$};}	
}
{[xshift=7cm,yshift=-2cm]
\shade[shading=axis,shading angle=90] (0,0) rectangle (.2,3.5);
\draw (0,0) -- (0,2)  
		node[name=w1,solid,label=-10:$w'$,pos=0] {}
  		node[name=w2,solid,label=-10:$w''$,pos=1] {}
	-- (0,3.5) 
		node[name=u,solid,label=10:$u\dag$,pos=1] {}
	(0,2) -- (-2.5,2)
		node[solid,pos=1,name=a] {}  
		node[int,pos=.6] {$\alpha\dag$}
	--(-2.5,0)
		node[solid,pos=1] {}
		node[pos=.6,right] {$\triangle\dag$}
	-- (0,0)
		node[int,pos=.25] {$\beta\dag$}
	-- ++(205:3cm)
	-- ++(0:4cm)
	(0,0) -- +(190:2cm)
  	(0,0)   -- +(225:2.5cm) node[solid,name=v,label=-20:$v\dag$] {} node[int,pos=.33] {$\gamma\dag$} ;	

	\draw (v) ..controls +(50:2cm) and +(225:1cm) .. (w2) node[left,pos=.75,fill=white,opacity=.8] {$f_{v\dag}\gamma\dag$}
		(v) .. controls +(50:2cm) and +(-35:1cm) .. (a) ;

}
\draw[->] (3,-1) -- (4,-1);
\end{tikzpicture}
\caption{Proof that irreducible morphisms are given by pivots}\label{fig irredmorph}
\end{figure}
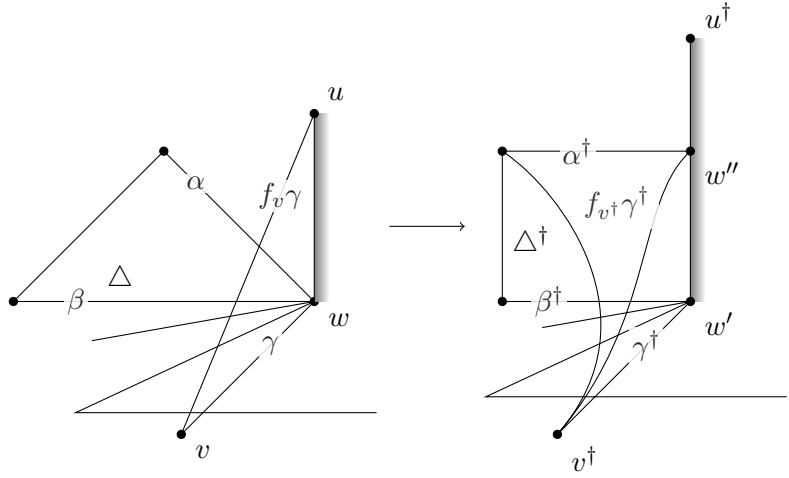
Now in this situation, $f_{v^\dd}\gamma^\dd$ is obtained from $\gamma$ by moving the endpoint $w'$ along the boundary segment of the quasi-triangle $\triangle^\dd$ to the point $w''$. The generalized arc $f_{v^\dd}\gamma^\dd$ crossed every arc in $T^\dd$ that is crossed by $\gamma^\dd$ and, in addition, $f_{v^\dd}\gamma^\dd$ also crosses every arc in $T^\dd$ that is in the complete fan $\beta_1,\beta_2,\ldots,\beta_j=\beta^\dd$ at $w^\dd$. It particular, its last crossing is with the arc $\beta^\dd$. On the level of the string modules, this corresponds to obtaining the string modules $H(f_{v^\dd}\gamma^\dd)$  by adding the hook $\bullet\ot\beta_1\to\beta_2\to \cdots\to \beta^\dd$ to the string module $H(\gamma^\dd)$. Thus the morphism $H(f_{v^\dd})$ is irreducible.

Part (c). This follows directly from (b) and  from the description of the Auslander-Reiten translation for string modules in \cite{BR}. 

Part (d). The statement about the projective modules follows from (c) and the fact that the indecomposable projective modules are the indecomposable string modules $M(w)$ whose Auslander-Reiten translate is zero. Similarly, the statement about the injective modules follows from (c) and the fact that the indecomposable injective modules are the indecomposable string modules $M(w)$ whose inverse Auslander-Reiten translate is zero.

Part (e). 
If $S$ is a disc, then it has been shown in \cite{CCS} that the functor $F$ is an equivalence of 
categories.  In particular, all $B_T$-modules, and therefore all $B_{T\dag}$-modules, are string modules, 
which shows that  $H\colon \cale\dag\to \cmod B_T$ is dense.

It remains to show that $H$ is full.  Let $\gamma,\gamma'$ be indecomposable objects in $\cale\dag$ and
let $f\in \Hom_{B_{T\dag}}(H(\gamma),H(\gamma'))$.  Applying the functor $\iota$ yields 
$\iota(f)\in \Hom_{B_{T}}(\iota\circ H(\gamma),\iota\circ H(\gamma'))$.  Since $\gamma$ and $\gamma'$ 
are objects in $\cale\dag$, their images under $F\circ G$ are $B_T$-modules on which the arrows in the cut act 
trivially.  Thus, applying $\iota\circ\res$ to $F\circ G(\gamma)$ and $F\circ G(\gamma')$ gives back 
$F\circ G(\gamma)$ and $F\circ G(\gamma')$, hence $\iota\circ H(\gamma) = F\circ G(\gamma)$ and 
$\iota\circ H(\gamma')=F\circ G(\gamma')$. Since the composition $F\circ G$ is full, it follows that there 
exists a morphism $f\dag\in \Hom_{\cale\dag}(\gamma,\gamma')$ such that $F\circ G(f\dag)=\iota(f)$.  
Applying $\res$ now yields $H(f\dag)=f$.  Thus $H$ is full.

Part (f). 
If $B_{T\dag}$ is of finite representation type, then all $B_{T\dag}$-modules are
string modules. Indeed, a band module would give rise to infinitely many isoclasses of
indecomposable band modules. Thus $H$ is dense by part (a). Since $B_{T\dag}$ is of finite
representation type it follows that the Auslander-Reiten quiver of $B_{T\dag}$ has only one
connected component and that each morphism between indecomposable $B_{T\dag}$-modules is
given as a composition of finitely many irreducible morphisms. It follows from part (b)
that $H$ is full.
\end{proof}

\begin{remark} 
Since $B_{T\dag}$ is a string algebra, it has two types of indecomposable modules: string
modules and band modules, see \cite{BR}. The theorem shows that the string modules are given by the equivalence
classes of permissible generalized arcs.

 The band modules can be parameterized by triples 
$(\gamma,\ell,\phi)$ where $\gamma$ is a permissible closed curve corresponding to a
generator of the fundamental group of $S$, $\ell$ is a positive integer, and $\phi$ is an automorphism of $k^\ell$.
The Auslander-Reiten structure for band modules is very simple: the irreducible morphisms are of the form 
$M(\zg,\ell,\phi) \to M(\zg,\ell+1,\phi)$  and, if $\ell>1$, $M(\zg,\ell,\phi) \to M(\zg,\ell-1,\phi)$; whereas the Auslander-Reiten translation is the identity $\tau M(\zg,\ell,\phi) = M(\zg,\ell,\phi)$.
\end{remark}
\subsection{Finite representation type}
\begin{cor}\label{cor: finite type 1}
The surface algebra $B_{T\dag}$ is of finite representation type if and only if no simple
closed non-contractible curve in $(S,M\dag,T\dag)$ is permissible.
\end{cor}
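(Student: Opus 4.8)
The plan is to reduce the statement to the classical fact that a string algebra is of finite representation type exactly when it has no bands, and then to interpret the existence of a band geometrically, using the description of $\cmod B_{T\dag}$ obtained in Theorem~\ref{thm cats} (and the parameterization of band modules in the remark following it).

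First I would recall that $B_{T\dag}$ is a string algebra (indeed gentle, as shown earlier), so its indecomposable modules are the string modules and the band modules. A band $b$ produces the infinite family $\{M(b,1,\lambda)\}_{\lambda\in k^\times}$ of pairwise non-isomorphic indecomposables, whereas in the absence of bands there are only finitely many strings and hence only finitely many indecomposables; so $B_{T\dag}$ is of finite representation type if and only if it admits no band (see~\cite{BR}). It therefore suffices to prove that $B_{T\dag}$ admits a band if and only if $(S,M\dag,T\dag)$ contains a permissible simple closed non-contractible curve.

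For the ``if'' direction I would take such a curve $\gamma$ and isotope it into minimal position with respect to $T\dag$. Since every component of the complement of $T\dag$ is a disc (a triangle or a \quasitri), $\gamma$ must cross $T\dag$, so the cyclic sequence of arcs it crosses defines a closed walk $w$ in $Q_{T\dag}$ of positive length. Minimal position excludes backtracking; the relations of $I_{T\dag}$ coming from uncut internal triangles are avoided as in \cite{BZ}; and the only remaining relations of $I_{T\dag}$ are the \quasitri\ relations $\alpha_i\dag\to\gamma_i\to\beta_i\dag$, which $w$ avoids precisely because $\gamma$ is permissible. Thus $w$ is a reduced relation-avoiding cyclic walk, and since $\gamma$ is simple and non-contractible it is not a proper power in $\pi_1(S)$, so $w$ is not a proper power of a string, i.e. $w$ is a band. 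Conversely, by Lemma~\ref{lem admissible cut} and the construction of $I_{T\dag}$ every relation of $B_{T\dag}$ is also a relation of $B_T$, so every string of $B_{T\dag}$ --- in particular every band --- is a band of $B_T$; by \cite{BZ} it corresponds to a simple closed non-contractible curve $\gamma$ in $(S,M,T)$ on which the arrows of the cut act trivially, and $\gamma$ then lifts to a simple closed non-contractible curve in $(S,M\dag,T\dag)$ which cannot cross two opposite sides of a \quasitri\ consecutively (that would make its walk traverse a \quasitri\ relation), hence is permissible.

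The step I expect to be the main obstacle is the bookkeeping in the ``if'' direction: verifying simultaneously that a permissible simple closed curve in minimal position gives a reduced walk, that this walk avoids every relation of $I_{T\dag}$, and that primitivity of $\gamma$ translates into the band condition --- this is the cut analogue of the Br\"ustle--Zhang band description and is where the real work lies. Once it, together with its converse, is in place, the corollary follows by combining it with the string-algebra dichotomy recalled above.
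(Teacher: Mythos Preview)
Your strategy matches the paper's: both reduce to the equivalence between bands in $B_{T\dag}$ and permissible simple closed non-contractible curves, and then invoke the standard fact that a string algebra is representation-finite precisely when it has no bands. The paper's proof is three sentences because it simply quotes the unproven Remark preceding the corollary for the band--curve correspondence; you actually supply an argument for that correspondence, which is an improvement in rigor.

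There is one conceptual slip you should repair. You assert that permissibility is what makes $w$ avoid the quasi-triangle relations $\alpha_i\dag\to\gamma_i\to\beta_i\dag$. That is not where permissibility enters. A curve in minimal position avoids those relations automatically: both arrows in such a relation come from the same quasi-triangle, and a minimal-position curve enters and leaves a (simply connected) quasi-triangle through exactly two sides, so its walk can pick up at most one arrow from that quasi-triangle before moving to a neighbouring region. What permissibility really guarantees is the earlier, unjustified step that ``the cyclic sequence of arcs it crosses defines a closed walk $w$ in $Q_{T\dag}$'' at all. A non-permissible consecutive crossing of the opposite sides $\alpha_i\dag,\beta_i\dag$ corresponds in $Q_T$ to the arrow $\beta_i\to\alpha_i$ that was \emph{deleted} by the cut; in $Q_{T\dag}$ there is simply no arrow joining those two vertices through that quasi-triangle, so the sequence of crossings would fail to be a walk. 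The same correction applies to your converse: the lifted curve is permissible because the band, lying in $Q_{T\dag}$, uses none of the cut arrows --- not because crossing opposite sides would ``traverse a quasi-triangle relation''. Once you reassign the role of permissibility in this way, your argument goes through and agrees with the paper's.
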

\begin{proof} Every permissible non-contractible closed curve defines infinitely many 
indecomposable band modules.  Conversely, if no simple non-contractible closed curve
is permissible, then $B_{T\dag}$ has no band modules.  It follows that $B_{T\dag}$ is
of finite representation type. 
\end{proof}

\begin{cor}\label{cor: finite type 2}\mbox{}
\begin{enuma}
\item If $S$ is a disc, then $B_{T\dag}$ is of finite representation type.
\item If $S$ is an annulus, then $B_{T\dag}$ is of finite representation type if and only if
there is a \quasitri\ in $T\dag$ with two vertices on one boundary component and two 
vertices on the other boundary component. 
\end{enuma}
\end{cor}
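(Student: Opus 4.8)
Both parts will be deduced from Corollary~\ref{cor: finite type 1}, which says that $B_{T\dag}$ is of finite representation type if and only if no simple closed non-contractible curve in $(S,M\dag,T\dag)$ is permissible. Part~(a) is then immediate: a disc is simply connected, so it carries no non-contractible closed curve whatsoever, and the hypothesis of Corollary~\ref{cor: finite type 1} holds vacuously.

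For part~(b) the plan is the following. Up to isotopy the annulus $S$ has exactly one simple closed non-contractible curve, a core curve $\eta$ generating $\pi_1(S)\cong\ZZ$ (its higher powers are not simple), so by Corollary~\ref{cor: finite type 1} we must decide precisely when $\eta$ admits a permissible representative. Call an interior arc of $T\dag$ \emph{transversal} if it joins the two distinct boundary components of $S$. Cutting $S$ along a transversal arc produces a disc, whereas cutting along a non-transversal arc produces a disc together with an annulus; hence a core curve in minimal position with respect to $T\dag$ (no bigons) crosses each transversal arc an odd number of times and misses every non-transversal arc. Recall also from the cut construction (Definition~\ref{def cut} and the subsequent construction of $Q_{T\dag}$) that a \quasitri\ $\triangle_i^\dd$ of $T\dag$ has two of its vertices equal to the split points $v_i',v_i''$, which lie on a common boundary component, and that its four sides, read cyclically, are $\alpha_i^\dd$, the new boundary segment, $\beta_i^\dd$, and $\gamma_i$. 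Thus $\{\alpha_i^\dd,\beta_i^\dd\}$ is the unique pair of non-adjacent interior sides, so a curve fails to be permissible at $\triangle_i^\dd$ exactly when it consecutively crosses $\alpha_i^\dd$ and $\beta_i^\dd$; and the remaining two vertices of $\triangle_i^\dd$ lie on the second boundary component if and only if both $\alpha_i^\dd$ and $\beta_i^\dd$ are transversal (in which case $\gamma_i$ is not).

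Granting this, I would argue quasi-triangle by quasi-triangle. Suppose first that some $\triangle_i^\dd$ has two vertices on each boundary component. Then $\alpha_i^\dd$ and $\beta_i^\dd$ are transversal and $\gamma_i$ bounds a disc, so a minimal-position core curve $\eta$ crosses $\alpha_i^\dd$ and $\beta_i^\dd$ but not $\gamma_i$; since the open quasi-triangle contains no arc of $T\dag$, every maximal subarc of $\eta$ inside $\triangle_i^\dd$ has both endpoints on $\alpha_i^\dd\cup\beta_i^\dd$ and cannot have both on the same side, for otherwise it would cut off an empty bigon, contradicting minimality. Hence $\eta$ consecutively crosses the non-adjacent sides $\alpha_i^\dd,\beta_i^\dd$, is not permissible, and $B_{T\dag}$ is of finite representation type. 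Suppose conversely that no \quasitri\ has two vertices on each boundary component. Then in each $\triangle_i^\dd$ at most one of $\alpha_i^\dd,\beta_i^\dd$ is transversal: if all four vertices lie on one component, no interior side is transversal and $\eta$ misses $\triangle_i^\dd$ altogether; if three lie on one component and one on the other, exactly one of $\alpha_i^\dd,\beta_i^\dd$ is transversal, the side $\gamma_i$ adjacent to it is transversal as well, and (by minimality, again because $\triangle_i^\dd$ is an arc-free disc) $\eta$ meets $\triangle_i^\dd$ only in subarcs joining these two adjacent sides. In every case a minimal-position core curve is permissible, so $B_{T\dag}$ has a band module and is of infinite representation type.

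I expect the delicate point of part~(b) to be justifying that permissibility may be tested on a minimal-position core curve: concretely, that an excursion of $\eta$ into the disc cut off by $\gamma_i$ always simplifies and can never make a configuration with two vertices on each component permissible, so that when such a \quasitri\ exists there is no permissible simple closed non-contractible curve at all; and, for the converse, that the prescribed local crossing patterns at the various quasi-triangles can be realised simultaneously by one simple closed curve. The transversal/non-transversal dichotomy and the bigon arguments above provide the tools; the remaining combinatorics of the two vertex-distribution cases is then routine.
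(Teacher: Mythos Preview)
Your proof is correct and follows the same route as the paper: both parts are deduced from Corollary~\ref{cor: finite type 1}, and for the annulus one checks when the essentially unique simple non-contractible closed curve is permissible. The paper's argument for (b) is a two-line assertion that the core curve crosses two opposite sides of a \quasitri\ precisely when that \quasitri\ has two vertices on each boundary component; you supply the justification the paper omits, via the transversal/non-transversal dichotomy and the vertex-distribution cases. That extra analysis is correct: since $v_i',v_i''$ lie on the same component, the interior sides of $\triangle_i^\dd$ have either zero or two transversal members, and the pair $\{\alpha_i^\dd,\beta_i^\dd\}$ is exactly the transversal pair if and only if the vertices split $2$--$2$.

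Your final paragraph is overcautious and makes the argument sound less complete than it is. Closed curves here are taken up to isotopy (just as generalized arcs are), so permissibility is decided on a minimal-position representative; the paper's own proof uses the same convention without comment. With that understood, your forward direction is finished as written, and an excursion into the disc bounded by $\gamma_i$ is simply not present in the minimal representative. For the converse there is nothing to ``realise simultaneously'': the minimal core curve is one global curve, and your local computations already describe its crossings in each \quasitri. One minor sharpening: for a \emph{simple} core curve in minimal position the crossing number with each transversal arc is exactly one, not merely odd.
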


\begin{proof}
Part (a).  If $S$ is a disc, then there are no simple closed non-contractible curves in $S$, and the
result follows from Corollary~\ref{cor: finite type 1}.

Part (b).  The fundamental group of $S$ is generated by a closed curve $\gamma$ that goes around 
the inner boundary component exactly once.  This curve $\gamma$ is permissible precisely when 
it does not cross two opposite sides of a \quasitri\ in $T\dag$, and this is precisely the case if there
is there is no \quasitri\ with two vertices on the interior boundary component and two vertices on the 
exterior boundary component.
\end{proof}

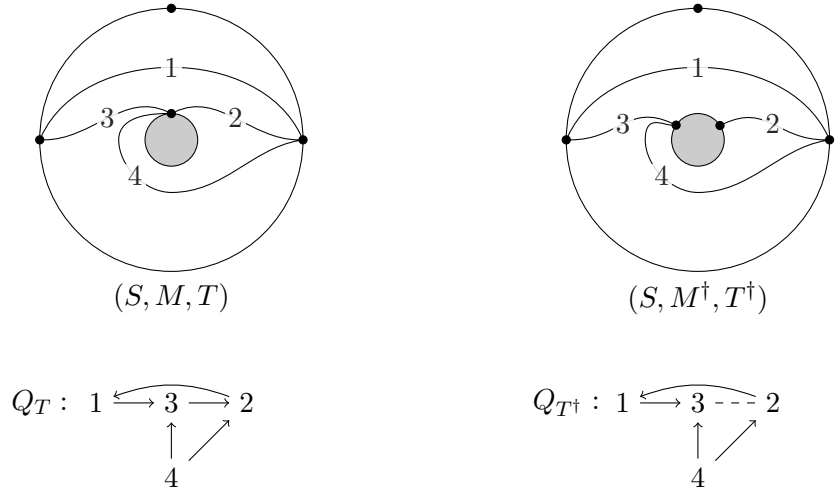
\begin{figure}
\begin{tikzpicture}
{[scale=.35]
	\draw (0,0) circle[radius=5cm] ;
	\filldraw[fill=black!20] (0,0) circle[radius=1cm];
	\node[solid,name=a] at (0:5cm) {};
	\node[solid,name=b] at (90:5cm) {};
	\node[solid,name=c] at (180:5cm) {};
	\node[solid,name=d] at (90:1cm) {};
	
	\draw (a) .. controls +(115:4cm) and +(65:4cm) .. (c) node[int] {1}
		   (a)  .. controls +(180:2cm) and +(25:2cm) .. (d) node[int] {2}
		   (c)  .. controls +(0:2cm) and +(155:2cm) .. (d) node[int] {3}
		   (d) .. controls +(180:.5cm) and +(90:1cm) .. (180:2cm)
		   (180:2cm) .. controls +(-90:1cm) and +(-180:1cm) .. (270:2cm) node[int] {4}
		   (270:2cm) .. controls +(0:2cm) and +(190:2cm) .. (a);
	\node at (270:6cm) {$(S,M,T)$};
}
{[yshift=-3.5cm,xshift=-2cm]
	\node at (.25,0) {$Q_T:$};
	\node[name=1] at (1,0) {$1$};
	\node[name=3] at (2,0) {$3$};
	\node[name=2] at (3,0) {$2$};
	\node[name=4] at (2,-1) {$4$};
	\path[->] (1) edge (3) 
			(3) edge(2)
			(2) edge[bend right=20]  (1)
			(4) edge (3) edge (2);
}
{[xshift=7cm,scale=.35]
	\draw (0,0) circle[radius=5cm];
	\filldraw[fill=black!20] (0,0) circle[radius=1cm];
	\node[solid,name=a] at (0:5cm) {};
	\node[solid,name=b] at (90:5cm) {};
	\node[solid,name=c] at (180:5cm) {};
	\node[solid,name=d] at (146:1cm) {};
	\node[solid,name=d'] at (33:1cm) {};
	
	\draw (a) .. controls +(115:4cm) and +(65:4cm) .. (c) node[int] {1}
		   (a)  .. controls +(180:2cm) and +(25:2cm) .. (d') node[int] {2}
		   (c)  .. controls +(0:2cm) and +(155:2cm) .. (d) node[int] {3}
		   (d) .. controls +(180:.5cm) and +(90:1cm) .. (180:2cm)
		   (180:2cm) .. controls +(-90:1cm) and +(-180:1cm) .. (270:2cm) node[int] {4}
		   (270:2cm) .. controls +(0:2cm) and +(190:2cm) .. (a);
	\node at (270:6cm) {$(S,M\dag,T\dag)$};
}
{[yshift=-3.5cm,xshift=5cm]
	\node at (.25,0) {$Q_{T\dag}:$};
	\node[name=1] at (1,0) {$1$};
	\node[name=3] at (2,0) {$3$};
	\node[name=2] at (3,0) {$2$};
	\node[name=4] at (2,-1) {$4$};
	\path[->] (1) edge (3) 
			(3) edge[dashed,-] (2)
			(2) edge[bend right=20]  (1)
			(4) edge (3) edge (2);
}
\end{tikzpicture}
\caption{An admissible cut of the annulus whose corresponding surface algebra is of finite representation type}\label{fig:annulus finite type ex}
\end{figure}

\begin{ex}
Let $(S,M,T)$ and $(S,M\dag,T\dag)$ be as in Figure~\ref{fig:annulus finite type ex}.  
By Corollary~\ref{cor: finite type 2}, $B_{T\dag}$ is of finite representation type.  The 
Auslander-Reiten quiver is given in Figure~\ref{fig: AR ex}, where modules
are given by their Loewy series in the upper picture and by the 
corresponding generalized permissible arc on the surface in the lower picture. 
\end{ex}
\begin{figure}
\begin{tikzpicture}[pin distance=.75,scale=1.4]
{[xscale=1.2]
\foreach \x/\y/\d [count=\n] in 
	{1/1/1, 
	 2/2/\dimv21, 2/4/3,
	 3/1/2,3/3/\dimvv4{3\ 2}{\ds 1}, 3/5/\dimv13,
	 4/2/\dimv{4}{3\ 2}, 4/4/\dimvv{1\ 4}{\ms3 2}{\ds\ 1},
	 5/1/\dimv43, 5/3/\dimv{1\ 4}{\ 3\ 2}, 5/5/\dimvv421,
	 6/2/\dimv{1\ 4}{\ms 3}, 6/4/\dimv42,
	7/3/4, 7/1/1
	}
{
	\node[name=\n] at (\x,\y) {$\d$};
}
\path[->] (1) edge (2) 
	       (2) edge (4) edge (5)
	       (3) edge (5) edge (6)
	       (4) edge (7)
	       (5) edge (7) edge (8)
	       (6) edge (8)
	       (7) edge (9) edge (10)
	       (8) edge (10)  edge (11)
	       (9) edge (12)
	       (10) edge (12) edge (13)
	       (11) edge (13)
	       (12) edge (14) edge (15)
	       (13) edge (14);
}
\end{tikzpicture} \\
\vspace{1.5cm}
\begin{tikzpicture}[scale=.8]
	{[xshift=0,yshift=0,scale=.15]
	\node[name=1,outer sep=1cm] at (0,0) {};
	\fill[fill=black!20] (0,0) circle[radius=1cm];
	\draw (0,0) circle[radius=5cm] (0,0) circle[radius= 1cm] 
		 node[solid] at (0:5cm) {} node[solid] at (90:5cm) {} node[solid] at (180:5cm) {}
		 node[solid] at (146:1cm) {} node[solid] at (33:1cm) {}
		 (146:1cm) -- (90:5cm);
	}
	{[xshift=2cm,yshift=2cm,scale=.15]
	\node[name=2,outer sep=1cm] at (0,0) {};
	\fill[fill=black!20] (0,0) circle[radius=1cm];
	\draw (0,0) circle[radius=5cm] (0,0) circle[radius= 1cm] 
		 node[solid] at (0:5cm) {} node[solid] at (90:5cm) {} node[solid] at (180:5cm) {}
		 node[solid] at (146:1cm) {} node[solid] at (33:1cm) {}
		 (146:1cm) .. controls +(180:.5cm) and +(90:1cm) .. (180:2cm)
		 (180:2cm) .. controls +(-90:1cm) and +(-180:1cm) .. (270:2cm)
		 (270:2cm) .. controls +(0:1cm) and +(-90:1cm) .. (0:2cm)
		 (0:2cm) -- (90:5cm);
	}
	{[xshift=2cm,yshift=6cm,scale=.15]
	\node[name=3,outer sep=1cm] at (0,0) {};
	\filldraw[fill=black!20] (0,0) circle[radius=1cm];
	\draw (0,0) circle[radius=5cm] (0,0) circle[radius= 1cm] 
		 node[solid] at (0:5cm) {} node[solid] at (90:5cm) {} node[solid] at (180:5cm) {}
		 node[solid] at (146:1cm) {} node[solid] at (33:1cm) {}
		  (0:5cm) .. controls +(180:1cm) and +(0:1cm) .. (90:2cm)
		 (90:2cm) ..controls +(180:1cm) and +(90:1cm) .. (180:2cm)
		 (180:2cm) .. controls +(-90:1cm) and +(-180:1cm) .. (270:2cm)
		 (270:2cm) ..controls +(0:1cm) and +(180:1cm) .. (0:5cm);
	}
	{[xshift=4cm,yshift=0cm,scale=.15]
	\node[name=4,outer sep=1cm] at (0,0) {};
	\fill[fill=black!20] (0,0) circle[radius=1cm];
	\draw (0,0) circle[radius=5cm] (0,0) circle[radius= 1cm] 
		 node[solid] at (0:5cm) {} node[solid] at (90:5cm) {} node[solid] at (180:5cm) {}
		 node[solid] at (146:1cm) {} node[solid] at (33:1cm) {}
		  (146:1cm) .. controls +(180:.5cm) and +(90:1cm) .. (180:2cm)
		 (180:2cm) .. controls +(-90:1cm) and +(-180:1cm) .. (270:2cm)
		 (270:2cm) .. controls +(0:1cm) and +(-90:1cm) .. (0:2cm)
		 (0:2cm) .. controls +(90:1cm) and +(0:1cm) .. (90:2cm)
		 (90:2cm) ..controls +(180:1cm) and +(0:1cm) .. (180:5cm);
	}
	{[xshift=4cm,yshift=4cm,scale=.15]
	\node[name=5,outer sep=1cm] at (0,0) {};
	\fill[fill=black!20] (0,0) circle[radius=1cm];
	\draw (0,0) circle[radius=5cm] (0,0) circle[radius= 1cm] 
		 node[solid] at (0:5cm) {} node[solid] at (90:5cm) {} node[solid] at (180:5cm) {}
		 node[solid] at (146:1cm) {} node[solid] at (33:1cm) {}
		 (0:5cm) .. controls +(180:1cm) and +(0:1cm) .. (90:2cm)
		 (90:2cm) ..controls +(180:1cm) and +(90:1cm) .. (180:2cm)
		 (180:2cm) .. controls +(-90:1cm) and +(-180:1cm) .. (270:2cm)
		 (270:2cm) .. controls +(0:1cm) and +(-90:1cm) .. (0:2cm)
		 (0:2cm) -- (90:5cm);
	}
	{[xshift=4cm,yshift=8cm,scale=.15]
	\node[name=6,outer sep=1cm] at (0,0) {};
	\fill[fill=black!20] (0,0) circle[radius=1cm];
	\draw (0,0) circle[radius=5cm] (0,0) circle[radius= 1cm] 
		 node[solid] at (0:5cm) {} node[solid] at (90:5cm) {} node[solid] at (180:5cm) {}
		 node[solid] at (146:1cm) {} node[solid] at (33:1cm) {}
		(90:5cm) ..controls +(270:1cm) and +(90:1cm) .. (180:2cm)
		(180:2cm) ..controls +(270:1cm)  and +(180:1cm) .. (270:2cm)
		 (270:2cm) ..controls +(0:1cm) and +(180:1cm) .. (0:5cm);
	}
	{[xshift=6cm,yshift=2cm,scale=.15]
	\node[name=7,outer sep=1cm] at (0,0) {};
	\fill[fill=black!20] (0,0) circle[radius=1cm];
	\draw (0,0) circle[radius=5cm] (0,0) circle[radius= 1cm] 
		 node[solid] at (0:5cm) {} node[solid] at (90:5cm) {} node[solid] at (180:5cm) {}
		 node[solid] at (146:1cm) {} node[solid] at (33:1cm) {}
		 (0:5cm) .. controls +(180:1cm) and +(0:1cm) .. (90:2.5cm)
		 (90:2.5cm) ..controls +(180:1cm) and +(90:1cm) .. (180:2cm)
		 (180:2cm) .. controls +(-90:1cm) and +(-180:1cm) .. (270:2cm)
		 (270:2cm) .. controls +(0:1cm) and +(-90:1cm) .. (0:2cm)
		 (0:2cm) .. controls +(90:1cm) and +(0:1cm) .. (90:2cm)
		 (90:2cm) ..controls +(180:1cm) and +(0:1cm) .. (180:5cm);
	}
	{[xshift=6cm,yshift=6cm,scale=.15]
	\node[name=8,outer sep=1cm] at (0,0) {};
	\fill[fill=black!20] (0,0) circle[radius=1cm];
	\draw (0,0) circle[radius=5cm] (0,0) circle[radius= 1cm] 
		 node[solid] at (0:5cm) {} node[solid] at (90:5cm) {} node[solid] at (180:5cm) {}
		 node[solid] at (146:1cm) {} node[solid] at (33:1cm) {}
		(90:5cm) ..controls +(270:1cm) and +(90:1cm) .. (180:2cm)
		 (180:2cm) .. controls +(-90:1cm) and +(-180:1cm) .. (270:2cm)
		 (270:2cm) .. controls +(0:1cm) and +(-90:1cm) .. (0:2cm)
		 (0:2cm) -- (90:5cm);
	}
	{[xshift=8cm,yshift=0cm,scale=.15]
	\node[name=9,outer sep=1cm] at (0,0) {};
	\fill[fill=black!20] (0,0) circle[radius=1cm];
	\draw (0,0) circle[radius=5cm] (0,0) circle[radius= 1cm] 
		 node[solid] at (0:5cm) {} node[solid] at (90:5cm) {} node[solid] at (180:5cm) {}
		 node[solid] at (146:1cm) {} node[solid] at (33:1cm) {}
		  (0:5cm) .. controls +(180:1cm) and +(0:1cm) .. (90:2cm)
		 (90:2cm) ..controls +(180:1cm) and +(90:1cm) .. (180:2cm)
		 (180:2cm) .. controls +(-90:1cm) and +(-180:1cm) .. (270:2cm)
		 (270:2cm) .. controls +(0:1cm) and +(-90:1cm) .. (0:2cm)
		 (0:2cm) ..controls +(90:1cm) and +(0:1cm) .. (33:1cm);
	}
	{[xshift=8cm,yshift=4cm,scale=.15]
	\node[name=10,outer sep=1cm] at (0,0) {};
	\draw (0,0) circle[radius=5cm] (0,0) circle[radius= 1cm] 
		 node[solid] at (0:5cm) {} node[solid] at (90:5cm) {} node[solid] at (180:5cm) {}
		 node[solid] at (146:1cm) {} node[solid] at (33:1cm) {}
		(90:5cm) ..controls +(270:1cm) and +(90:1cm) .. (180:2cm)
		 (180:2cm) .. controls +(-90:1cm) and +(-180:1cm) .. (270:2cm)
		 (270:2cm) .. controls +(0:1cm) and +(-90:1cm) .. (0:2cm)
		 (0:2cm) .. controls +(90:1cm) and +(0:1cm) .. (90:2cm)
		 (90:2cm) ..controls +(180:1cm) and +(0:1cm) .. (180:5cm);
	}
	{[xshift=8cm,yshift=8cm,scale=.15]
	\node[name=11,outer sep=1cm] at (0,0) {};
	\fill[fill=black!20] (0,0) circle[radius=1cm];
	\draw (0,0) circle[radius=5cm] (0,0) circle[radius= 1cm] 
		 node[solid] at (0:5cm) {} node[solid] at (90:5cm) {} node[solid] at (180:5cm) {}
		 node[solid] at (146:1cm) {} node[solid] at (33:1cm) {}
		(180:5cm) ..controls +(0:1cm) and +(180:1cm) .. (270:2cm)
		 (270:2cm) .. controls +(0:1cm) and +(-90:1cm) .. (0:2cm)
		 (0:2cm) .. controls +(90:1cm) and +(270:1cm) .. (90:5cm);
	}
	{[xshift=10cm,yshift=2cm,scale=.15]
	\node[name=12,outer sep=1cm] at (0,0) {};
	\fill[fill=black!20] (0,0) circle[radius=1cm];
	\draw (0,0) circle[radius=5cm] (0,0) circle[radius= 1cm] 
		 node[solid] at (0:5cm) {} node[solid] at (90:5cm) {} node[solid] at (180:5cm) {}
		 node[solid] at (146:1cm) {} node[solid] at (33:1cm) {}
		 (33:1cm) ..controls +(45:1cm) and +(90:1cm) .. (0:2cm)
		 (0:2cm) .. controls +(270:1cm) and +(0:1cm) .. (270:2cm)
		 (270:2cm) .. controls +(180:1cm) and +(270:1cm) .. (180:2cm)
		 (180:2cm) .. controls +(90:1cm) and +(270:1cm) .. (90:5cm);
	}
	{[xshift=10cm,yshift=6cm,scale=.15]
	\node[name=13,outer sep=1cm] at (0,0) {};
	\fill[fill=black!20] (0,0) circle[radius=1cm];
	\draw (0,0) circle[radius=5cm] (0,0) circle[radius= 1cm] 
		 node[solid] at (0:5cm) {} node[solid] at (90:5cm) {} node[solid] at (180:5cm) {}
		 node[solid] at (146:1cm) {} node[solid] at (33:1cm) {}
		(180:5cm) ..controls +(0:1cm) and +(180:1cm) .. (270:2cm)
		 (270:2cm) .. controls +(0:1cm) and +(-90:1cm) .. (0:2cm)
		 (0:2cm) .. controls +(90:1cm) and +(0:1cm) .. (90:2cm)
		 (90:2cm) ..controls +(180:1cm) and +(0:1cm) .. (180:5cm);
	}
	{[xshift=12cm,yshift=4cm,scale=.15]
	\node[name=14,outer sep=1cm] at (0,0) {};
	\fill[fill=black!20] (0,0) circle[radius=1cm];
	\draw (0,0) circle[radius=5cm] (0,0) circle[radius= 1cm] 
		 node[solid] at (0:5cm) {} node[solid] at (90:5cm) {} node[solid] at (180:5cm) {}
		 node[solid] at (146:1cm) {} node[solid] at (33:1cm) {}
		 (33:1cm) ..controls +(45:1cm) and +(90:1cm) .. (0:2cm)
		 (0:2cm) .. controls +(270:1cm) and +(0:1cm) .. (270:2cm)
		 (270:2cm) .. controls +(180:1cm) and +(0:1cm) .. (180:5cm);
	}
	{[xshift=12cm,yshift=0,scale=.15]
	\node[name=15,outer sep=1cm] at (0,0) {};
	\fill[fill=black!20] (0,0) circle[radius=1cm];
	\draw (0,0) circle[radius=5cm] (0,0) circle[radius= 1cm] 
		 node[solid] at (0:5cm) {} node[solid] at (90:5cm) {} node[solid] at (180:5cm) {}
		 node[solid] at (146:1cm) {} node[solid] at (33:1cm) {}
		 (146:1cm) -- (90:5cm);
	}
\path[<-] (1) edge (2) 
	       (2) edge (4) edge (5)
	       (3) edge (5) edge (6)
	       (4) edge (7)
	       (5) edge (7) edge (8)
	       (6) edge (8)
	       (7) edge (9) edge (10)
	       (8) edge (10)  edge (11)
	       (9) edge (12)
	       (10) edge (12) edge (13)
	       (11) edge (13)
	       (12) edge (14) edge (15)
	       (13) edge (14);
\end{tikzpicture}
\caption{The Auslander-Reiten quiver of the surface algebra in Figure \ref {fig:annulus finite type ex}. Indecomposable modules are represented by their Loewy series in the upper diagram, and by their permissible generalized arc in the lower diagram.}\label{fig: AR ex}
\end{figure}
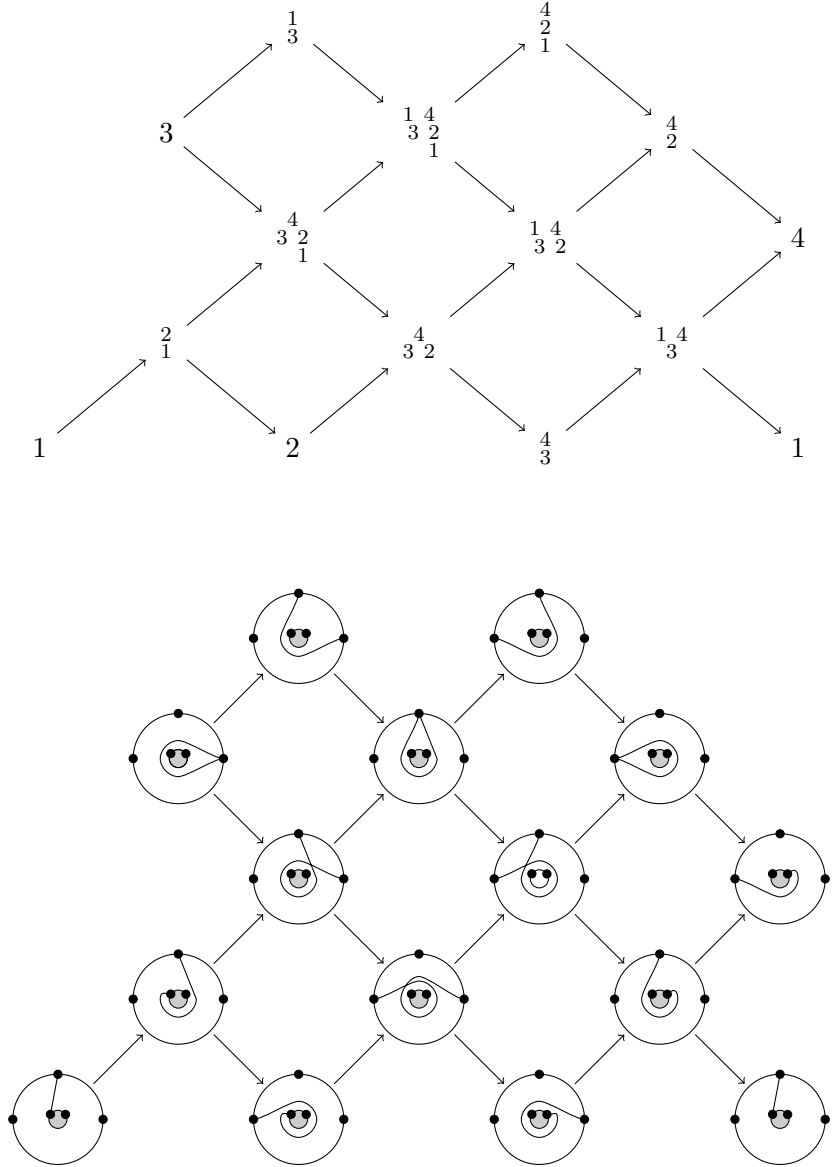

\section{ A geometric description of iterated tilted algebras of type $\AA_n$ of global dimension at most 2}\label{sect 6}

In this section, we apply the results in section 3 to obtain a description of the admissible cut 
surface algebras of type $(S,M,T)$ where $S$ is a disc.  

In this case, the Jacobian algebra $B_T$ is a cluster-tilted algebra of type $\AA$ and $B_{T\dag}$ is 
a quotient of $B_T$ by an admissible cut.  It has been show in \cite{BFPPT} that the algebras
$B_{T\dag}$ obtained in this way are precisely the iterated tilted algebras of type $\AA$ of 
global dimension at most two.

\begin{prop}
Every iterated tilted algebra $C$ of type $\AA_n$ of global dimension at most two is 
isomorphic to the endomorphism algebra of a partial cluster-tilting object in the cluster
category of type $\AA_{n+\ell}$, where $\ell$ is the number of relations in a minimal
system of relations for $C$.
\end{prop}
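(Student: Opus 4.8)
The plan is to assemble the statement from results already established. By Corollary~\ref{cor 3.6} (which in turn rests on \cite{BFPPT} and \cite{CCS}), an iterated tilted algebra $C$ of type $\AA_n$ of global dimension at most two is isomorphic to a surface algebra $B_{T^\dd}$, where $(S,M^\dd,T^\dd)$ is an admissible cut of a triangulated disc $(S,M,T)$ with $|M|=n+3$. So the task reduces to identifying the generalized cluster category $\calc_{(S,M^\dd)}$ and counting the indecomposable summands of $T^\dd$, in both cases in terms of $n$ and $\ell$.

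First I would do the bookkeeping. Let $t$ be the number of local cuts used to pass from $(S,M,T)$ to $(S,M^\dd,T^\dd)$; since the cut is admissible, $t$ equals the number of internal triangles of $T$, and the ideal $I_{T^\dd}$ is generated exactly by the $t$ length-two monomial relations $\alpha_i^\dd\to\gamma_i\to\beta_i^\dd$, one for each cut triangle. These are pairwise distinct monomials of length two, hence pairwise incomparable, so they constitute the unique minimal system of monomial relations of $C$; therefore $\ell=t$. On the geometric side, cutting an internal triangle adds exactly one marked point, so $(S,M^\dd)$ is a disc with $|M^\dd|=n+3+\ell$ marked points, and by the rank formula in Section~\ref{sect surfaces} its rank is $n+\ell$. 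Consequently, by \cite{CCS,BZ}, $\calc_{(S,M^\dd)}$ is the cluster category of type $\AA_{n+\ell}$.

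Then I would invoke Theorem~1: $B_{T^\dd}\cong\End_{\calc_{(S,M^\dd)}}T^\dd$, where $T^\dd$ denotes the object of $\calc_{(S,M^\dd)}$ corresponding to the partial triangulation $T^\dd$. A local cut does not change the number of arcs, so $T^\dd$ has exactly $n$ indecomposable summands inside a cluster category of rank $n+\ell$; thus $T^\dd$ is a partial cluster-tilting object, and it is a proper one (not cluster-tilting) exactly when $\ell\geq1$, i.e.\ when $C$ is not hereditary. Combining these identifications gives the desired description of $C$.

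There is no serious obstacle here: the proof is a synthesis of Corollary~\ref{cor 3.6}, the construction of $B_{T^\dd}$, and Theorem~1. The only point that needs a moment's care is the equality $\ell=t$, i.e.\ checking that the $t$ length-two relations produced by the cut internal triangles are genuinely a \emph{minimal} set of relations for $C$; this is immediate because they are pairwise distinct monomials of the same length and, by the proof of Theorem~\ref{thm adm cut}(b), do not overlap, so none of them is redundant.
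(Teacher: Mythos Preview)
Your proof is correct and follows essentially the same approach as the paper: invoke Corollary~\ref{cor 3.6} to realise $C$ as an admissible cut surface algebra of the disc, then apply Theorem~1 and count marked points. The paper's proof is two sentences and does not justify the identification $\ell=t$, whereas you make this explicit; your added care there is appropriate and correct.
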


\begin{proof}
Let $(S,M\dag,T\dag)$ be the admissible cut corresponding to $C$. Then 
$C\cong \End_{\calc{(S,M\dag)}}(T\dag)$
and $(S,M\dag)$ is a disc with $n+\ell$ marked points.
\end{proof}

\begin{ex} Let $C$ be the tilted algebra given by the bound quiver
\begin{tikzpicture}[baseline=-3]
\node[name=1] at (1,0) {1};
\node[name=2] at (2,0) {2};
\node[name=3] at (3,0) {3};
\node[name=4] at (4,0) {4};
\path[->] (1) edge (2) (2) edge (3) (3) edge (4) (1) edge[bend left=20,dashed,-] (3);
\end{tikzpicture}.
Then we have $n=4$ and $\ell=1$, so $C$ is the endomorphism algebra of a partial cluster-tilting object in the cluster category of type $\AA_5$. By \cite{CCS}  this category can be seen as the category of diagonals in an octagon, and the partial cluster-tilting object corresponds to the following partial triangulation.
\[\begin{tikzpicture}[scale=.66]
\foreach \a [count=\n] in {0,1,2,3,4,5,6,7}
	\node[name=\n,solid] at (360*\a/8:3cm) {};
\foreach \x [remember=\x as \lastx (initially 8)] in {1,...,8}
	\draw (\lastx.center) -- (\x.center);
\draw (1) -- (3) node[int] {4}
	(1) -- (4) node[int] {3}
	(1) -- (7) node[int] {2}
	(7) -- (5) node[int] {1};
{[on background layer] \fill[black!10] (1.center) -- (4.center) --(5.center) -- (7.center) -- cycle;} 
\end{tikzpicture}\]
Here the shaded region indicates the unique quasi-triangle. The Auslander-Reiten quiver is described in Figure~\ref{fig tilted}. Note that permissible arcs are not allowed to consecutively cross the two sides label $1$ and $3$ of the shaded quasi-triangle. Also note that   each of the modules $\begin{array}{c} 3 \vspace{-2pt}\\ 4\end{array}$, $3,2$ and $1$, can be represented by two equivalent permissible arcs.

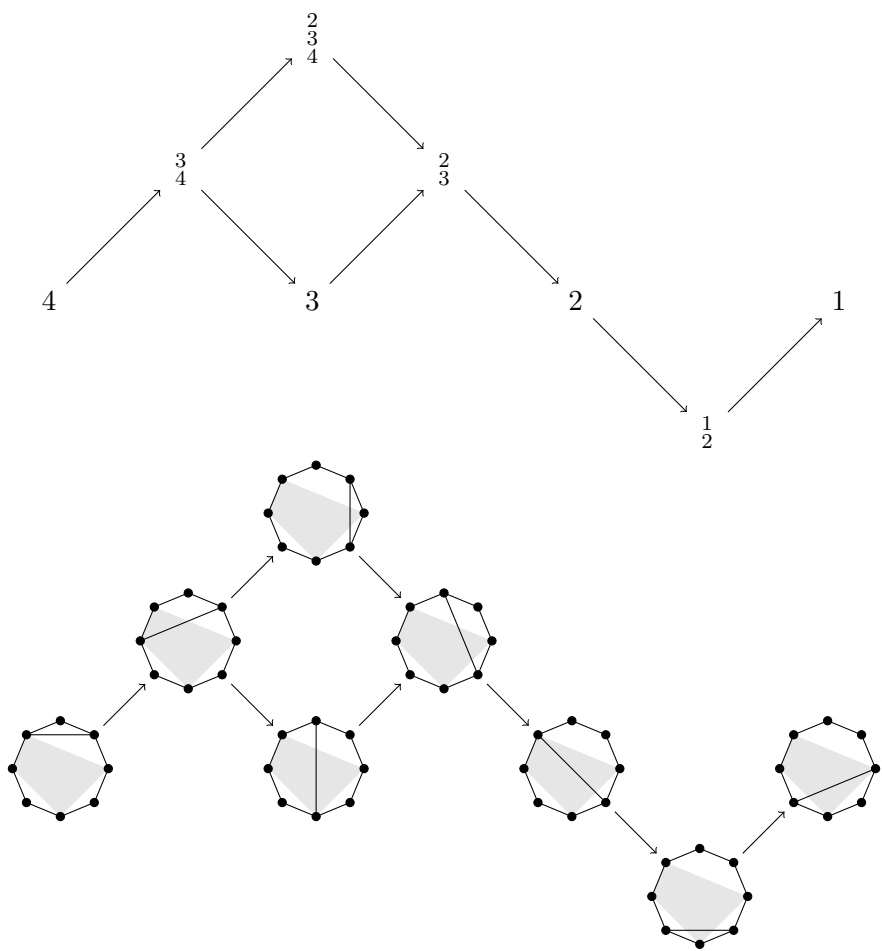
\begin{figure}
\begin{tikzpicture}[pin distance=.75,scale=1.75]
\foreach \x/\y/\d [count=\n] in 
	{1/1/4, 
	 2/2/\dimv34, 
	 3/1/3,3/3/\dimvv234,
	 4/2/\dimv23,
	 5/1/2,
	 6/0/\dimv12,
	 7/1/1
	 }{
	\node[name=\n] at (\x,\y) {$\d$};
}
\path[->] (1) edge (2) 
	       (2) edge (3) edge (4)
	       (3) edge (5) 
	       (4) edge (5)
	       (5) edge (6)
	       (6) edge (7)
	       (7) edge (8);
\end{tikzpicture}

\begin{tikzpicture}[scale=.85]
{[]
\node[name=n1,outer sep=1cm] at (0,0) {};
\foreach \a [count=\n] in {0,1,2,3,4,5,6,7}
	\node[name=\n,solid] at (360*\a/8:.75cm) {};
\foreach \x [remember=\x as \lastx (initially 8)] in {1,...,8}
	\draw (\lastx.center) -- (\x.center);
\draw (2) -- (4);
{[on background layer] \fill[black!10] (1.center) -- (4.center) --(5.center) -- (7.center) -- cycle;} 
}
{[xshift=2cm,yshift=2cm]
\node[name=n2,outer sep=1cm] at (0,0) {};
\foreach \a [count=\n] in {0,1,2,3,4,5,6,7}
	\node[name=\n,solid] at (360*\a/8:.75cm) {};
\foreach \x [remember=\x as \lastx (initially 8)] in {1,...,8}
	\draw (\lastx.center) -- (\x.center);
\draw (2) -- (5);
{[on background layer] \fill[black!10] (1.center) -- (4.center) --(5.center) -- (7.center) -- cycle;} 
}
{[xshift=4cm,yshift=0cm]
\node[name=n3,outer sep=1cm] at (0,0) {};
\foreach \a [count=\n] in {0,1,2,3,4,5,6,7}
	\node[name=\n,solid] at (360*\a/8:.75cm) {};
\foreach \x [remember=\x as \lastx (initially 8)] in {1,...,8}
	\draw (\lastx.center) -- (\x.center);
\draw (3) -- (7);
{[on background layer] \fill[black!10] (1.center) -- (4.center) --(5.center) -- (7.center) -- cycle;} 
}
{[xshift=4cm,yshift=4cm]
\node[name=n4,outer sep=1cm] at (0,0) {};
\foreach \a [count=\n] in {0,1,2,3,4,5,6,7}
	\node[name=\n,solid] at (360*\a/8:.75cm) {};
\foreach \x [remember=\x as \lastx (initially 8)] in {1,...,8}
	\draw (\lastx.center) -- (\x.center);
\draw (2) -- (8);
{[on background layer] \fill[black!10] (1.center) -- (4.center) --(5.center) -- (7.center) -- cycle;} 
}
{[xshift=6cm,yshift=2cm]
\node[name=n5,outer sep=1cm] at (0,0) {};
\foreach \a [count=\n] in {0,1,2,3,4,5,6,7}
	\node[name=\n,solid] at (360*\a/8:.75cm) {};
\foreach \x [remember=\x as \lastx (initially 8)] in {1,...,8}
	\draw (\lastx.center) -- (\x.center);
\draw (3) -- (8);
{[on background layer] \fill[black!10] (1.center) -- (4.center) --(5.center) -- (7.center) -- cycle;} 
}
{[xshift=8cm,yshift=0cm]
\node[name=n6,outer sep=1cm] at (0,0) {};
\foreach \a [count=\n] in {0,1,2,3,4,5,6,7}
	\node[name=\n,solid] at (360*\a/8:.75cm) {};
\foreach \x [remember=\x as \lastx (initially 8)] in {1,...,8}
	\draw (\lastx.center) -- (\x.center);
\draw (4) -- (8);
{[on background layer] \fill[black!10] (1.center) -- (4.center) --(5.center) -- (7.center) -- cycle;} 
}
{[xshift=10cm,yshift=-2cm]
\node[name=n7,outer sep=1cm] at (0,0) {};
\foreach \a [count=\n] in {0,1,2,3,4,5,6,7}
	\node[name=\n,solid] at (360*\a/8:.75cm) {};
\foreach \x [remember=\x as \lastx (initially 8)] in {1,...,8}
	\draw (\lastx.center) -- (\x.center);
\draw (6) -- (8);
{[on background layer] \fill[black!10] (1.center) -- (4.center) --(5.center) -- (7.center) -- cycle;} 
}
{[xshift=12cm,yshift=0cm]
\node[name=n8,outer sep=1cm] at (0,0) {};
\foreach \a [count=\n] in {0,1,2,3,4,5,6,7}
	\node[name=\n,solid] at (360*\a/8:.75cm) {};
\foreach \x [remember=\x as \lastx (initially 8)] in {1,...,8}
	\draw (\lastx.center) -- (\x.center);
\draw (6) -- (1);
{[on background layer] \fill[black!10] (1.center) -- (4.center) --(5.center) -- (7.center) -- cycle;} 
}

\path[<-] (n1) edge (n2) 
	       (n2) edge (n3) edge (n4)
	       (n3) edge (n5) 
	       (n4) edge (n5)
	       (n5) edge (n6)
	       (n6) edge (n7)
	       (n7) edge (n8);
\end{tikzpicture}
\caption{The Auslander-Reiten quiver of a tilted algebra of type $\AA_4$}\label{fig tilted}
\end{figure}
\end{ex}

\bigskip

\begin{minipage}[pos]{7in}{\small  Lucas David-Roesler
\\
Department of Mathematics\\
University of Connecticut\\
lucas.roesler@uconn.edu\\
\\
Ralf Schiffler
\\
Department of Mathematics\\
University of Connecticut\\
schiffler@math.uconn.edu\\}  
\end{minipage}
\end{document}